\newtheorem{thm}{Theorem}[section]
\newtheorem*{thm*}{Theorem}
\newtheorem{prop}{Proposition}[section]
\newtheorem{lem}[prop]{Lemma}
\newtheorem{cor}[prop]{Corollary}
\theoremstyle{definition}
\newtheorem{defn}[thm]{Definition}
\theoremstyle{remark}
\newtheorem{rem}[thm]{Remark}
\numberwithin{equation}{section}
\newcommand{\norm}[1]{\left\Vert#1\right\Vert}
\newcommand{\abs}[1]{\left\vert#1\right\vert}
\newcommand{\cx}{{\mathbb{C}}}
\newcommand{\rl}{{\mathbb{R}}}
\newcommand{\tensor}{\otimes}
\newcommand*{\Cdot}{\raisebox{-0.25ex}{\scalebox{1.65}{$\cdot$}}}
\newcommand{\tmop}[1]{\ensuremath{\operatorname{#1}}}
\renewcommand{\Re}{\tmop{Re}}
\newcommand{\ol}{\overline}
\newcommand{\dbar}{\ol\partial}
\newcommand{\wt}{\widetilde}
\DeclareMathOperator{\dist}{dist}
\newcommand\gipr[1] {\left( #1 \right)}
\newcommand{\eps}{\varepsilon}
\newcommand{\ddbar}{\partial\bar\partial}
\DeclareMathOperator{\dom}{dom}
\DeclareMathOperator{\range}{range}
\title[A Modified Morrey-Kohn-H\"ormander Identity and Applications]{A Modified Morrey-Kohn-H\"ormander Identity and Applications to the $\dbar$-Problem}
\author{Debraj Chakrabarti}
\address{Department of Mathematics, Central Michigan University, Mt. Pleasant,  MI 48859,  USA}
\email{chakr2d@cmich.edu}
\author{Phillip S. Harrington}
\address{SCEN 309, 1 University of Arkansas, Fayetteville, AR 72701, USA}
\email{psharrin@uark.edu}
\thanks{Debraj Chakrabarti was partially supported by NSF grant  no.~1600371.}
\subjclass[2010]{32W05}
\begin{document}

\begin{abstract}
We prove a  modified form of the classical Morrey-Kohn-H\"ormander identity, adapted to  pseudoconcave boundaries. Applying this result
to an annulus between two bounded pseudoconvex domains in $\cx^n$, where the inner domain has
 $\mathcal{C}^{1,1}$ boundary, we show that the $L^2$ Dolbeault cohomology group
in bidegree $(p,q)$ vanishes if  $1\leq q\leq n-2$  and is Hausdorff and infinite-dimensional
if $q=n-1$, so that the Cauchy-Riemann operator has closed range in each bidegree.
As a dual result, we prove that the Cauchy-Riemann operator is solvable in the $L^2$ Sobolev space $W^1$ on any pseudoconvex domain with $\mathcal{C}^{1,1}$ boundary.  We also generalize our results to annuli between domains which are weakly $q$-convex in the sense of Ho for appropriate values of $q$.
\end{abstract}

\maketitle

\section{Introduction}
In the theory of holomorphic functions of several variables,  pseudoconvex domains play a central
role, since they are precisely the domains of holomorphy: on a pseudoconvex domain (open connected set)  $\Omega\subset\cx^n$, there is a holomorphic function which cannot be extended past each boundary point of $\Omega$, even locally.  One of several possible definitions of pseudoconvexity of $\Omega$  is that $\Omega$ admits a strictly plurisubharmonic exhaustion function. From the early days of the theory, it has also been clear that some of the nice
properties of pseudoconvex domains extend to non-pseudoconvex domains satisfying weaker
convexity conditions (cf. \cite{AnGr62}). The classical approach to the existence  and finiteness
theorems through patching of local results using sheaves was soon supplemented by powerful
methods based on $L^2$-estimates on the $\dbar$- and $\dbar$-Neumann problems (\cite{Koh63, Hor65, andreottivesentini}). H\"ormander in   \cite{Hor65} showed that the Cauchy-Riemann operator is solvable in $L^2_{p,q}(\Omega)$ for all $0\leq p\leq n$ and $1\leq q\leq n$ provided $\Omega$ is  pseudoconvex and bounded.  Significantly, H\"ormander's results require no boundary smoothness for $\Omega$. Estimates and existence results  were
also obtained in non-pseudoconvex domains satisfying appropriate convexity conditions on
the Levi form of the boundary or on exhaustion functions, such as the conditions variously called $a_q$, $A_q$, or $Z(q)$ (see \cite{FoKo72,Hor65}),  {weak $Z(q)$ (see \cite{HaRa15}),} or weak $q$-convexity (see  \cite{Ho91}).  Given the centrality of $L^2$-methods in modern complex analysis,
it is of great interest to try to extend them to a wider classes of domains, with the fewest regularity assumptions on the boundary and the widest range of admissible convexity conditions on the domain  possible.

An interesting class of non-pseudoconvex domains is that of  annuli.
By an {\em annulus} we mean a domain $\Omega$ in $\cx^n$ of the form $\Omega=\Omega_1\setminus \ol{\Omega_2}$, where $\Omega_1$ is a bounded domain in $\cx^n$ and $ \Omega_2$ is an open subset (not necessarily connected)
in $\cx^n$ such that $\Omega_2\Subset \Omega_1$, i.e., $\Omega_2$ is relatively compact
in $\Omega_1$.  We will refer to $\Omega_1$ as the {\em envelope}
of the annulus $\Omega$ and $\Omega_2$ as the {\em hole}.  When $n\geq 3$, and both the envelope and hole are strongly pseudoconvex with smooth boundary, the annulus satisfies condition $Z(q)$ for $1\leq q \leq n-2$ and therefore we have subelliptic estimates for the $\dbar$-Neumann problem \cite{FoKo72}. For  pseudoconvex envelope and hole with smooth boundaries, the problem was studied by Shaw in
\cite{Sha85}, where she showed that the $\dbar$-operator has closed range in $L^2_{p,q}(\Omega)$ for all $0\leq p\leq n$ and $1\leq q\leq n-2$.  In \cite{Hor04}, H\"ormander
examined carefully the special case where the hole and the envelope are concentric balls, and
showed that for $1\leq q\leq n-2$ the range of the $\dbar$-operator coincides with the $\dbar$-closed forms in $L^2_{p,q}(\Omega)$, but for $q=n-1$, there is an infinite dimensional  cohomology group, which forms the obstruction to the solvability of the $\dbar$-problem. In
\cite{Sha10}, Shaw showed that the same statements hold in general annuli with smoothly bounded pseudoconvex hole and envelope.
 In \cite{CLS18}, the first author, together with Laurent-Thi\'ebaut and Shaw, proved that the Cauchy-Riemann operator has closed range in $L^2_{0,1}(\Omega)$  on a class of annuli $\Omega$ in which the hole has only Lipschitz regularity.

H\"ormander's argument on pseudoconvex domains requires no boundary smoothness because pseudoconvex domains can be exhausted by smoothly bounded pseudoconvex domains,
and the closed range estimates on each domain in the exhaustion have uniform constants.  In contrast, Shaw's proof seems to require a boundary of at least class $\mathcal{C}^3$, since a key computation (\cite[ equation (3.18)] {Sha85}) involves commuting the adjoint of a tangential derivative with another derivative.  In \cite{HaRa17}, the second author and Raich consider a more general class of non-pseudoconvex domains and find that in such cases a $\mathcal{C}^4$ boundary may be required.

In this paper, we study function theory on annuli using an alternative approach different from those in \cite{Sha85, Hor04, CLS18} but somewhat related to the one in \cite{LiSh13}. Our main tool is a connection between $W^1$ estimates
on the hole and $L^2$-estimates on the annulus, which is  reminiscent of
{\em Alexander duality}
in topology.  Alexander duality relates the \v{C}ech cohomology of a compact subset $A$ of the sphere  or Euclidean space with the homology groups (or cohomology groups, via Poincaré duality) of  the complement of $A$ (see, e.g., \cite[pp. 352--353]{bredon}). In order to use this technique, we
need to establish existence and estimates  for the $\dbar$-operator on an annulus,
where we must use {\em mixed} boundary conditions for the $\dbar$-operator. While this problem was studied in \cite{LiSh13} assuming $W^1$ estimates in the hole, we first establish the required estimates directly on an annulus with $\mathcal{C}^2$-smooth hole, and use the estimate on the annulus to obtain the $W^1$-estimates on the hole,  at which point an exhaustion argument allows us to deal with $\mathcal{C}^{1,1}$-smooth holes. This brings us to the main innovation
of this paper: we carry out a careful computation of the error terms that arise when computing
 closed range estimates with a canonical defining function (the signed distance function) to show that the third derivatives of the defining function which arise will
 ultimately cancel each other.  In our basic identity for the inner boundary of the annulus (Theorem \ref{thm:pseudoconcave_identity}), we find that the only significant error term is a
  curvature term  $\eta$ (defined in \eqref{eq-eta_squared})  depending entirely on second derivatives of the signed distance function. As a consequence, we are not only able to prove solvability of the $\dbar$-problem
  for annuli with holes that are only $\mathcal{C}^{1,1}$-smooth, but in principle estimate the
  constant in the $L^2$-estimate on the annulus, a problem which most likely cannot be
  handled by the method of \cite{Sha85}.

It should be noted that special cases of the connection between $W^1$-estimates on the hole and
$L^2$-estimates on the annulus have been studied before. In these cases, the relation between the
 $L^2$-estimates and $W^1$-estimates  is established in particular degrees using Hartogs
 phenomenon (see, e.g., \cite[Proposition~4.7]{LaSh13}).  In \cite{CLS18}, such a relation was
used to obtain $W^1$-estimates for the $\dbar$-problem on the polydisc in $\cx^n$ in degree $(0,n-1)$ . However, this technique
based on Hartogs phenomenon cannot be used to obtain estimates in  any other degree.

Most of the results of this paper have generalizations to domains in Stein manifolds.  {Given a Stein manifold $M$, we have a strictly plurisubharmonic exhaustion function $\varphi$ which can be used to generate a K\"ahler form $\omega=i\ddbar\varphi$.  The methods of proof in this paper can be adapted to this case using the given weight function $\varphi$ in place of $|z|^2$.  See \cite{HaRa15} for details.  As demonstrated in \cite{HaRa15}, this extra flexibility can be helpful when dealing with hypotheses such as ours that depend on the choice of metric, since even in $\mathbb{C}^n$ there may be examples in which our hypotheses fail under the Euclidean metric but are satisfied for a different K\"ahler metric.}
We choose to
present them in the setting of $\cx^n$ for simplicity.

\subsection{Acknowledgements} We thank Peter Ebenfelt and László Lempert for their valuable comments.
\section{Preliminaries and Statement of Results}
\label{sec:preliminaries}
\subsection{Weakly \texorpdfstring{$q$}{q}-convex Domains} Our results will be stated
in terms of {\em  weakly $q$-convex domains}, a class of domains introduced by Ho in  \cite{Ho91}, where he observed that  many of H\"ormander's arguments in \cite{Hor65} and Shaw's arguments in \cite{Sha85} would follow for domains which were not necessarily pseudoconvex.  For a domain $D\Subset\cx^n$, and for a fixed $1\leq q\leq n-1$, Ho defined $D$ to be  {\em weakly $q$-convex} if  at any boundary point the sum of any $q$ eigenvalues of the Levi-form is positive (technically this is an equivalent formulation proven in \cite[Lemma 2.2]{Ho91}).  We will adapt Ho's definition to non-smooth domains, as follows. Recall that the  {\em signed distance function} $\rho$   of  an open set  ${D}\subset\cx^n$ is the function
\begin{equation}\label{eq-rhodef}
\rho(z)= \begin{cases}\phantom{-} \dist(z,b{D}) &\text{outside  } {D} \\
-\dist(z,b{D})& \text{inside  }{D}.\end{cases}
\end{equation}

\begin{defn}
\label{defn:weak_q_convex}
  For $D\subset\mathbb{C}^n$ open and $1\leq q\leq n$, we say that a continuous {function} $\varphi:D\rightarrow\mathbb{R}$ is \emph{ $q$-subharmonic} on $D$ if  $\varphi$ satisfies the sub-mean value property on any $q$-dimensional polydisc in $D$.  By the last statement we mean the following: if $P_q(0,r)=B(0,r)\times\dots\times B(0,r)\subset\mathbb{C}^q$ is a $q$-dimensional polydisc and $\psi:\ol{P_q(0,r)}\rightarrow D$ is a holomorphic isometry, i.e., a mapping of the form $\psi(z)= Uz+b$ where $U$ is  an  $n\times q$ matrix such that $U^*U$ is the identity on $\cx^q$,  and $b\in\cx^n$, then
  \[ \varphi(\psi(0)) \leq \frac{1}{(2\pi)^q}\int_0^{2\pi}\dots \int_0^{2\pi} \varphi(\psi(re^{i\theta_1}, \dots, r e^{i\theta_q}))d\theta_1\dots d\theta_q.\]
  For an open {set} ${D}\subset\mathbb{C}^n$ and $1\leq q\leq n-1$, we say that ${D}$ is weakly $q$-convex if there exists a constant $C>0$ and a neighborhood $U$ of $b{D}$ such that $-\log(-\rho)+C|z|^2$ is $q$-subharmonic on $U\cap{D}$, where $\rho$ is the signed distance function for ${D}$ (see \eqref{eq-rhodef} above for its definition).
  For $q=0$ or $q=n$, we adopt the convention that no {open set} is weakly $0$-convex and all {open sets} are weakly $n$-convex.
\end{defn}

 Several remarks are in order regarding these definitions. As in \cite{Hor90}, it suffices to consider upper semicontinuous functions $\varphi$ in the definition of $q$-subharmonicity, but we will not need this generality in the following.
When $\varphi$ is $\mathcal{C}^2$, $\varphi$ is $q$-subharmonic if and only if the sum of any $q$ eigenvalues of the complex Hessian of $\varphi$ are positive at every point in ${D}$.  This follows from  \cite[Theorem 1.4]{Ho91}. Notice consequently that a 1-subharmonic function is just a (continuous) plurisubharmonic function, and a weakly 1-convex domain is just a
pseudoconvex domain.

 On $\mathcal{C}^3$ domains, our definition of  being
 weakly $q$-convex via exhaustion agrees with Ho's definition via \cite[Theorem 2.4]{Ho91} (see also the remark following the proof).
 In fact, Ho shows that on $\mathcal{C}^3$ domains, one can use any defining function $\rho$.  We choose to use the signed distance function for two reasons.  In the first place, this is the natural analog of the pseudoconvex case, in which Oka's Lemma tells us that $-\log(-\rho)$ is subharmonic on any pseudoconvex domain.  In the second place, on $\mathcal{C}^{1,1}$ domains the signed distance function is known to be $\mathcal{C}^{1,1}$ on a neighborhood of the boundary (this is also shown by Krantz and Parks \cite{KrPa81}, since Federer has already established that $\mathcal{C}^{1,1}$ domains have positive reach \cite{Fed59}).

 Notice also that the notions of $q$-subharmonicity and weak $q$-convexity, as defined, are not  biholomorphically invariant except when $q=1$. They are of course invariant under the holomorphic isometries, as well as the slightly larger group of transformations of $\cx^n$ of  the form
 $\psi(z)=c Uz+b$ where $c\not=0$, $U$ is an $n\times n$ unitary matrix and $b\in \cx^n$.

\subsection{Results on the \texorpdfstring{$\dbar$}{dbar}-problem}

Recall that a domain $D$ is said to have $\mathcal{C}^{1,1}$ boundary if there exists a defining function $\rho$ for $D$ that is $\mathcal{C}^1$ with Lipschitz first derivatives in a neighborhood of the boundary.  Krantz and Parks \cite{KrPa81} show that we can, in fact, take $\rho$ to be the signed distance function for $D$.

We will use $W^1_{p,q}(D)$ to denote the space of $(p,q)$-forms with coefficients in the $L^2$-Sobolev space $W^1(D)$.

Our first result is the following:
\begin{thm}
\label{thm:W^1_solvability}
  Let $D$ be a bounded domain in $\mathbb{C}^n$, $n\geq 2$, such that  $1\leq q\leq n$ and $D$ is a weakly $q$-convex domain with $\mathcal{C}^{1,1}$ boundary.  Then, for
  $0\leq p\leq n$, there exists a constant $C>0$  such that for every $f\in W^1_{p,q}(D)$ satisfying $\dbar f=0$, there exists $u\in W^1_{p,q-1}(D)$ satisfying $\dbar u=f$ and
  \begin{equation}
  \label{eq:W^1_estimate}
    \norm{u}_{W^1}\leq C\norm{f}_{W^1}.
  \end{equation}
\end{thm}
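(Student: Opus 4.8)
The plan is to deduce the $W^1$ estimate on $D$ from a purely $L^2$ estimate, with mixed boundary conditions, for $\dbar$ on the annulus obtained by removing $\ol{D}$ from a large ball, and then to drop the extra boundary regularity by an exhaustion argument; the decisive feature, supplied by Theorem~\ref{thm:pseudoconcave_identity}, is that this $L^2$ estimate can be arranged so that its constant depends on no derivative of the signed distance function beyond the second. First I would treat the case in which $bD$ is of class $\mathcal{C}^2$: fix a ball $B\Supset\ol{D}$ and set $\Omega=B\setminus\ol{D}$, an annulus with smooth strictly pseudoconvex envelope $B$ and weakly $q$-convex $\mathcal{C}^2$ hole $D$. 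There is a duality of Alexander type — of the kind considered in \cite{LiSh13}, implemented concretely through the boundary pairing $(f,g)\mapsto\int_{bD}f\wedge g$ between $(p,q)$-forms on $D$ and $(n-p,n-q-1)$-forms on $\Omega$ — between the $\dbar$-problem with $W^1$-estimates on the hole and the $\dbar$-problem with $L^2$-estimates and mixed boundary conditions on the annulus. By this duality, the theorem in bidegree $(p,q)$ is equivalent to the statement that $\dbar$ has closed range, with vanishing cohomology and a quantitative estimate, for the corresponding mixed $\dbar$-problem on $\Omega$ in bidegree $(n-p,n-q-1)$, where one imposes the $\dbar^*$-condition on one component of $b\Omega$ and the dual, Dirichlet-type condition on the other. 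It therefore suffices to prove this last statement.

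To prove it I would run the standard Hilbert-space scheme for $\dbar$ on $\Omega$ — basic estimate, weak-equals-strong, density — the only real work being the basic estimate, obtained by integrating two pointwise identities over $\Omega$ and $b\Omega$. Near the outer boundary $bB$ one uses the classical Morrey--Kohn--H\"ormander identity, whose Levi boundary term is nonnegative because $bB$ is pseudoconvex; near the inner boundary $bD$, which is pseudoconcave as seen from $\Omega$, one uses a weight built from the signed distance function of $D$ together with the modified identity of Theorem~\ref{thm:pseudoconcave_identity}, in which the third derivatives of that function cancel so that the only genuine error is the curvature term $\eta$ of \eqref{eq-eta_squared}. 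The sign reversal at the inner boundary turns the weak $q$-convexity of $D$ into exactly the positivity needed, in this complementary bidegree, to dominate the $\eta$-term and close the basic estimate; a Friedrichs-mollification argument then extends it from smooth forms to all forms in the relevant domains. The constant so produced depends only on the diameter of $D$ and on a bound for the second derivatives of its signed distance function, i.e. only on the $\mathcal{C}^{1,1}$-data of $D$.

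Finally I would remove the $\mathcal{C}^2$ hypothesis by exhaustion. By \cite{Fed59, KrPa81} the signed distance function $\rho$ of a $\mathcal{C}^{1,1}$ domain is $\mathcal{C}^{1,1}$ near $bD$; weak $q$-convexity is inherited by the inner parallel bodies $\{\rho<-t\}$ with a constant independent of small $t>0$; and a mild smoothing of the weight $-\log(-\rho)+C|z|^2$ produces smooth weakly $q$-convex domains $D_\nu\nearrow D$ whose $\mathcal{C}^{1,1}$-data remain uniformly bounded. Applying the $\mathcal{C}^2$ case on each $D_\nu$ gives $u_\nu$ with $\dbar u_\nu=f|_{D_\nu}$ and $\norm{u_\nu}_{W^1(D_\nu)}\le C\norm{f}_{W^1(D)}$ for a single constant $C$, and a weak-$W^1$ limit along an exhaustion of $D$ by compacta yields the desired $u\in W^1_{p,q-1}(D)$ with $\dbar u=f$ and $\norm{u}_{W^1}\le C\norm{f}_{W^1}$. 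The top-degree case $q=n$ reduces immediately to $W^1$-solvability on the ball $B$, and the low-dimensional exceptional bidegrees (those for which the complementary degree on $\Omega$ is $0$) are handled directly via Hartogs extension.

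I expect the main obstacle to be the basic estimate on $\Omega$: one must glue the classical identity on $bB$ to the modified identity near $bD$ across the interior while tracking all the error terms introduced by the cutoffs and the cross terms, and verify that the mixed boundary conditions together with weak $q$-convexity really do make the right-hand side sign-definite for forms of the correct bidegree. Equivalently, the delicate point is the bookkeeping: the whole computation must be organized so that no third (or higher) derivative of the defining function survives in the final constant, since only then does the exhaustion from $\mathcal{C}^2$ to $\mathcal{C}^{1,1}$ go through.
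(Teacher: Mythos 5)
Your overall architecture --- reduce to an $L^2$ estimate with mixed boundary conditions on an annulus $\Omega$ with hole $D$, prove that estimate by combining the classical Morrey--Kohn--H\"ormander identity at the outer boundary with Theorem~\ref{thm:pseudoconcave_identity} at the inner boundary (applied in the complementary degree, with only the $\eta^2$ error surviving), and then pass from $\mathcal{C}^2$ to $\mathcal{C}^{1,1}$ by an exhaustion with uniform constants --- is indeed the paper's architecture, and that part of your plan is sound. The gap is in the step you call ``duality of Alexander type.'' You assert that $W^1$ solvability on $D$ in bidegree $(p,q)$ is \emph{equivalent}, via the boundary pairing $\int_{bD}f\wedge g$, to closed range with an $L^2$ estimate for the mixed problem on $\Omega$ in bidegree $(n-p,n-q-1)$, and you propose to deduce the theorem from this equivalence. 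No such equivalence is established, and it cannot deliver the quantitative conclusion as stated: an $L^2$-type pairing puts $W^1$ in duality with $W^{-1}$, not with $L^2$, so even granting a clean duality of cohomology groups you would only learn that a certain cohomology space is Hausdorff or vanishes --- you would not obtain a solution $u$ of $\dbar u=f$ together with the bound $\norm{u}_{W^1}\leq C\norm{f}_{W^1}$. (The duality statements the paper does prove, Proposition~\ref{prop-hcinfdim} and Corollary~\ref{cor-pairing}, are isomorphisms of cohomology spaces in the single bidegree $(p,1)$/$(p,n-1)$, and they are consequences of the solvability results, not inputs to them.)

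What is actually needed, and what the paper supplies, is a constructive bridge whose engine is \emph{interior elliptic regularity}: extend $f$ to $\tilde f=Ef$ with compact support in a strictly pseudoconvex $\Omega_1\Supset\ol{D}$; observe that $g=\dbar\tilde f$ vanishes on $D$ and hence lies in $\dom(\dbar_{\mathrm{mix}})$ on the annulus; solve $\dbar_{\mathrm{mix}}v=g$ there in bidegree $(p,q+1)$ (same $p$, degree raised by one --- not the complementary bidegree) with an $L^2$ bound; extend $v$ by zero across $\ol{D}$ so that $h=\tilde f-\tilde v$ is $\dbar$-closed on all of $\Omega_1$; solve $\dbar\tilde u=h$ canonically on $\Omega_1$ and restrict to $D$. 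Because $D\Subset\Omega_1$, interior regularity of the $\dbar$-Neumann operator gives $\norm{\tilde u}_{W^1(D)}\leq K\norm{h}_{L^2(\Omega_1)}$, and this is the only place in the argument where a $W^1$ bound on the solution is produced. If you replace your duality step by this construction, the remainder of your outline (the basic estimate on the annulus and the $\mathcal{C}^{1,1}$ exhaustion) goes through essentially as in the paper.
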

In the case when $q=1$ (a pseudoconvex domain with $\mathcal{C}^{1,1}$ boundary), this is contained in the results obtained by the second author in \cite{Har09b}. In \cite{Har09b}, the solution was a weighted canonical solution, but in our result, the solution
is obtained in a different way. We start from a solution of the $\dbar$-problem with mixed boundary conditions in an annulus with $D$ as its hole. As a consequence, we obtain an explicit upper bound \eqref{eq:W^1_constant_estimate}	for the constant $C$ in terms of the geometry of the domain $D$.

We now go on to results on annuli.  We have the following closed-range result:

\begin{thm} \label{thm:q_n-q} Let $\Omega$ be an annulus in $\cx^n$, with $n\geq 3$ such that for some $q$, $1\leq q \leq n-1$,  its envelope is weakly $q$-convex, and the hole  is weakly $(n-q)$-convex and has $\mathcal{C}^{1,1}$ boundary. Then for $0\leq p \leq n$, the $\dbar$-operator has closed range in $L^2_{p,q}(\Omega)$.
\end{thm}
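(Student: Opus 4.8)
The plan is to deduce the closed-range statement from a single \emph{a priori} inequality on $(p,q)$-forms, and to prove that inequality by treating the two components of $b\Omega$ separately. Write $\mathcal D^{p,q}$ for the $(p,q)$-forms smooth up to $b\Omega$ lying in $\Dom(\dbar)\cap\Dom(\dbar^*)$ for the $\dbar$-Neumann realization on $\Omega$; these are dense in $\Dom(\dbar)\cap\Dom(\dbar^*)$ in the graph norm. After the routine reduction to smoothly bounded domains --- exhausting the (possibly rough) weakly $q$-convex envelope $\Omega_1$ from inside by smooth weakly $q$-convex domains $\Omega_1'$ with $\ol{\Omega_2}\subset\Omega_1'\Subset\Omega_1$, as in H\"ormander's treatment of pseudoconvex domains \cite{Hor65}, keeping the $\mathcal C^{1,1}$ hole fixed --- it suffices to produce constants $C,C'>0$, independent of $\Omega_1'$, with
\begin{equation*}
  \norm{u}_{L^2(\Omega)}^2\ \leq\ C\bigl(\norm{\dbar u}^2+\norm{\dbar^*u}^2\bigr)+C'\norm{u}_{W^{-1}(\Omega)}^2,\qquad u\in\mathcal D^{p,q}.
\end{equation*}
By density this extends to all $u\in\Dom(\dbar)\cap\Dom(\dbar^*)$, and since $L^2(\Omega)\hookrightarrow W^{-1}(\Omega)$ is compact the last term is a compact perturbation; the standard argument (pass to a $W^{-1}$-convergent subsequence, then use the inequality to upgrade weak to norm convergence) shows $\dbar\oplus\dbar^*$ is bounded below on the orthogonal complement of the harmonic space $\mathcal H^{p,q}$, with no hypothesis on $\dim\mathcal H^{p,q}$ --- which matters, since for $q=n-1$ this space is infinite-dimensional. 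This gives that $\dbar$ has closed range in $L^2_{p,q}(\Omega)$, and letting $\Omega_1'\nearrow\Omega_1$ transfers the conclusion to $\Omega$.

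To establish the inequality I would run the Morrey-Kohn-H\"ormander machinery with a partition of unity subordinate to the two boundary components and the uniformly plurisubharmonic weight $\varphi=t\abs z^2$, $t\gg1$. Near the outer boundary $b\Omega_1'$, $\Omega$ is the inside of the weakly $q$-convex domain $\Omega_1'$, so the classical identity applies with the signed distance function $\rho_1$ of $\Omega_1'$; the boundary integrand on $b\Omega_1'$ is $\sum_{|L|=q-1}\sum_{j,k}(\rho_1)_{j\bar k}u_{jL}\ol{u_{kL}}$, which at each point is at least $\abs u^2$ times the sum of the $q$ smallest eigenvalues of the Levi form of $\rho_1$ --- the elementary bound on the trace of a Hermitian form against the symbol of a $(0,q)$-form by its smallest partial traces --- hence nonnegative. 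Near the inner boundary $b\Omega_2$, $\Omega$ is the complement of the hole and the corresponding term in the classical identity carries the wrong sign; here I invoke the modified identity of Theorem \ref{thm:pseudoconcave_identity} with defining function $-\rho_2$, where $\rho_2$ is the signed distance function of $\Omega_2$. That theorem converts the offending term into one that is nonnegative precisely because $\Omega_2$ is weakly $(n-q)$-convex, plus the single boundary error $\int_{b\Omega_2}\eta\,\abs u^2\,dS$; and since $b\Omega_2$ is $\mathcal C^{1,1}$, the function $\rho_2$ is $\mathcal C^{1,1}$ near $b\Omega_2$ (see \cite{Fed59,KrPa81}), so $\eta\in L^\infty(b\Omega_2)$. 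Adding the two local identities, the interior Hessian term dominates $\norm u^2$; the remaining terms --- the curvature error on $b\Omega_2$ and the partition-of-unity commutators --- are of lower order, and by the usual trace and Rellich arguments they cost only the compact remainder $C'\norm u^2_{W^{-1}}$.

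The crux --- and the step I expect to be the real obstacle --- is Theorem \ref{thm:pseudoconcave_identity} itself, that is, the inner boundary. The naive way to handle the wrong-sign pseudoconcave boundary term, integrating by parts the adjoint of a complex-tangential derivative as in \cite{Sha85}, produces error terms with \emph{three} derivatives of the defining function, forcing $b\Omega_2$ to be at least $\mathcal C^3$ and destroying any control of the constant. The point of the modified identity is that, carried out with the \emph{signed distance} function --- whose gradient has unit length, so that the combinations of its second derivatives along and transverse to the boundary that appear are tightly constrained --- these third-order contributions cancel in pairs, leaving only the second-order curvature quantity $\eta$ of \eqref{eq-eta_squared}. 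Verifying this cancellation is the one genuinely laborious computation; granting it, the rest is the routine package above, with the bonus that $C$ and $C'$ are effective, depending only on $\operatorname{diam}\Omega$, the widths of the boundary collars, and $\sup_{b\Omega_2}\abs\eta$.
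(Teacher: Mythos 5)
Your strategy---a compactness-type \emph{a priori} estimate
\[
\norm{u}_{L^2(\Omega)}^2\leq C\bigl(\norm{\dbar u}^2+\norm{\dbar^* u}^2\bigr)+C'\norm{u}_{W^{-1}(\Omega)}^2
\]
for $(p,q)$-forms in $\Dom(\dbar)\cap\Dom(\dbar^*)$ on the annulus itself---cannot work as stated, for two reasons.

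First, such an estimate is simply false when $q=n-1$. Restricting it to the harmonic space $\mathcal H^{p,n-1}$ gives $\norm{u}^2\leq C'\norm{u}_{W^{-1}}^2$ there, and compactness of $L^2\hookrightarrow W^{-1}$ then forces $\dim\mathcal H^{p,n-1}<\infty$. But Theorem \ref{thm-infinitedim} (and already H\"ormander's example of concentric balls) shows $H^{p,n-1}_{L^2}(\Omega)$ is infinite-dimensional in exactly the situations covered by the theorem ($q=n-1$, pseudoconvex hole). An orthonormal sequence in $\mathcal H^{p,n-1}$ tends to $0$ in $W^{-1}$ while $\dbar u_j=\dbar^*u_j=0$, contradicting the inequality. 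Your remark that the argument works ``with no hypothesis on $\dim\mathcal H^{p,q}$'' is precisely where this breaks: a compactness estimate always implies finite-dimensionality of the harmonic space, so no such estimate can exist in the top interesting degree.

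Second, even for $1\leq q\leq n-2$ your sign analysis at the inner boundary is off by one. Applying Theorem \ref{thm:pseudoconcave_identity} to a $(p,q)$-form in $\Dom(\dbar^*)$ near $b\Omega_2$, the boundary term is $-\int_{b\Omega_2}(H\rho-\tau I)(u,u)$, and by Lemma \ref{lem:Levi-form_estimates} (see \eqref{eq:weak_n-1-q_convexity}) this is nonnegative when the hole is weakly $(n-1-q)$-convex, not weakly $(n-q)$-convex as the theorem assumes; the inner sum there runs over the $n-1-q$ eigenvalues \emph{complementary} to $K$. Under the actual hypothesis the term can be negative and cannot be absorbed into a compact remainder. (Also, the $\eta$ term in \eqref{eq:pseudoconcave_identity} is a negative \emph{volume} term $-4\norm{\eta u}^2_\phi$, not a boundary integral.) The paper's hypothesis is calibrated to a different route: by Lemma \ref{lem:cs12lem3} the claim is equivalent to a closed-range estimate for $\dbar_c$ on $(n-p,n-q)$-forms, which is proved by solving $\dbar_c v=f$ on the weakly $q$-convex envelope (Lemma \ref{lem-qconvex2}), then correcting $v$ on the hole using the $W^1$ solvability of Theorem \ref{thm:W^1_solvability} in degree $n-q$ (this is where weak $(n-q)$-convexity of the hole enters), extending by Lemma \ref{lem:extension_operator}, and invoking minimality of the canonical solution. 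The pseudoconcave identity is only ever applied after a Hodge star, in a degree where the required convexity matches the hypothesis. Your direct basic-estimate approach, where it does close up, requires the stronger assumption of weak $(n-1-q)$-convexity of the hole---and under that assumption one in fact gets the stronger conclusion of vanishing cohomology (Theorem \ref{thm:q-convex}), not merely closed range.
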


Thanks to the closed-range property, the $L^2$-Dolbeault cohomology group
\[ H^{p,q}_{L^2}(\Omega)= \frac{\ker \dbar: L^2_{p,q}(\Omega)\to L^2_{p,q+1}(\Omega)}{{\rm range}\, \dbar: L^2_{p,q-1}(\Omega)\to L^2_{p,q}(\Omega) }\]
is Hausdorff in its natural topology (in fact,  for any domain $D$, the space $H^{p,q}_{L^2}(D)$ is Hausdorff if and only if $\dbar$ has closed range in $L^2_{p,q}(D)$). It does not necessarily follow that $H^{p,q}_{L^2}(\Omega)$ is trivial, and we will in fact see a case where we have closed range in $L^2_{p,n-1}(\Omega)$, but $H^{p,n-1}_{L^2}(\Omega)$ is infinite dimensional.  It is of course of great interest to compute this cohomology, which depends on  geometric properties more refined
than weak $q$-convexity.  However, we do have the following simple condition for the vanishing of this cohomology:

\begin{thm}\label{thm:q-convex}  Let   $\Omega$ be an annulus  in $\mathbb{C}^n$, $n\geq 3$,  such that  for some $1 \leq q \leq n-2$, the envelope is weakly $q$-convex and the
hole has $\mathcal{C}^{1,1}$ boundary and is weakly $(n-q-1)$-convex. Then for $0\leq p \leq n$, we have
\[ H^{p,q}_{L^2}(\Omega)=0.\]
\end{thm}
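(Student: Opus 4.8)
The plan is to relate the $L^2$-Dolbeault cohomology of the annulus $\Omega$ in bidegree $(p,q)$ to the $\dbar$-cohomology of its two pieces --- the envelope $\Omega_1$, where weak $q$-convexity yields $L^2$-solvability, and the hole $\Omega_2$, where weak $(n-q-1)$-convexity together with $\mathcal C^{1,1}$-regularity yields $W^1$-solvability by Theorem~\ref{thm:W^1_solvability} --- through the ``Alexander-duality''-type correspondence between $L^2$-estimates on the annulus and $W^1$-estimates on the hole that the paper develops.

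First I would dispose of Hausdorffness. A weakly $(n-q-1)$-convex open set is weakly $(n-q)$-convex: for a Hermitian form, positivity of the sum of every $k$ eigenvalues forces the $(k+1)$-st smallest eigenvalue to be positive, hence positivity of the sum of every $k+1$ eigenvalues, and this monotonicity passes through the sub-mean-value-over-$q$-discs formulation, so an $(n-q-1)$-subharmonic function is $(n-q)$-subharmonic. Thus the hole is in particular weakly $(n-q)$-convex, Theorem~\ref{thm:q_n-q} applies, $\dbar$ has closed range in $L^2_{p,q}(\Omega)$, and it suffices to prove that every $\dbar$-closed $f\in L^2_{p,q}(\Omega)$ equals $\dbar u$ for some $u\in L^2_{p,q-1}(\Omega)$; the bound $\norm{u}_{L^2}\le C\norm{f}_{L^2}$ is then automatic by the open mapping theorem.

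For the vanishing I would use a cutoff $\chi$ supported in a thin shell around $\ol{\Omega_2}$ to split the equation $\dbar u=f$ on $\Omega$ into (i) a $\dbar$-problem on the envelope $\Omega_1$ for a form built from $f$ and $\chi$, and (ii) a $\dbar$-problem ``localized at the hole'' for the error term $\dbar\chi\wedge f$, a $\dbar$-closed $(p,q+1)$-form near $\ol{\Omega_2}$. Part (i) is handled because $\Omega_1$ is weakly $q$-convex, hence weakly $(q+1)$-convex, so $\dbar$ is $L^2$-solvable on $\Omega_1$ in bidegrees $(p,q)$ and $(p,q+1)$ with no boundary regularity needed, by exhaustion as in H\"ormander--Ho. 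Part (ii) is the crux: the obstruction is a class in a cohomology attached to the hole in bidegree $(p,q+1)$, and the content of the paper's duality is that it is paired with the $\dbar$-cohomology of $\Omega_2$ in the complementary bidegree $(n-p,\,n-q-1)$, measured in $W^1$ rather than $L^2$ (the ``$W^1$-gain on the hole'' being precisely dual to the ``$L^2$-estimate on the annulus''); this pairing is set up through the $\dbar$-problem with mixed boundary conditions on $\Omega$ and the modified Morrey--Kohn--H\"ormander identity, Theorem~\ref{thm:pseudoconcave_identity}. Since the hole is weakly $(n-q-1)$-convex with $\mathcal C^{1,1}$ boundary, Theorem~\ref{thm:W^1_solvability} makes $\dbar$ $W^1$-solvable on $\Omega_2$ in bidegree $(n-p,n-q-1)$, so the paired cohomology vanishes, part (ii) is solvable, and $H^{p,q}_{L^2}(\Omega)=0$. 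The index bookkeeping is consistent: mere closed range in degree $q$ required weak $(n-q)$-convexity of the hole, whereas killing the cohomology demands one index sharper, namely weak $(n-q-1)$-convexity exactly as hypothesized, while $q\le n-2$ guarantees $n-q-1\ge1$ so that the relevant hole-degree is admissible.

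The step I expect to be the main obstacle is the rigorous execution of (ii) on a hole that is only $\mathcal C^{1,1}$: the cutoff-and-extend procedure across $b\Omega_2$ must be carried out inside the correct (Dirichlet-type) domain of $\dbar$ near $b\Omega_2$, and one must establish that the mixed $\dbar$-operator on $\Omega$ has closed range with its cohomology identified with a $W^1$-cohomology of the hole. This is exactly where the careful accounting of the error terms for the signed distance function is needed --- the third derivatives cancelling and only the curvature term $\eta$ of \eqref{eq-eta_squared} surviving --- after which an exhaustion of the $\mathcal C^{1,1}$ hole by $\mathcal C^2$ holes, as in the derivation of the $W^1$-estimate, completes the argument.
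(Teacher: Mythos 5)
Your overall strategy is pointed in the right direction, and your index bookkeeping agrees with the paper's: closed range in degree $q$ comes from Theorem~\ref{thm:q_n-q} once one notes that weak $(n-q-1)$-convexity of the hole implies weak $(n-q)$-convexity (the ``percolation'' the paper also uses), and the vanishing must ultimately be traded, through a duality, for $W^1$-solvability on $\Omega_2$ in bidegree $(n-p,n-q-1)$, which is exactly what Theorem~\ref{thm:W^1_solvability} supplies under the hypothesis of weak $(n-q-1)$-convexity with $\mathcal{C}^{1,1}$ boundary. That much matches the paper.

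The gap is in your part (ii), which is where the whole content of the theorem sits and which you assert rather than prove. Two concrete problems. First, the cutoff reduction as you describe it does not produce a well-posed problem ``localized at the hole'': $\dbar\chi\wedge f$ is a $\dbar$-closed $(p,q+1)$-form compactly supported in a shell inside $\Omega$, but solving $\dbar w=\dbar\chi\wedge f$ on the envelope (possible by $(q+1)$-convexity) and then solving $\dbar v=(1-\chi)f+w$ there only replaces $f$ by the cohomologous form $\chi f-w$ on $\Omega$, which is no longer supported near $\ol{\Omega_2}$; the reduction goes in circles unless you can take $w$ \emph{compactly supported}, i.e., unless you already control a compactly supported cohomology group attached to the hole --- and identifying that group and proving it vanishes is precisely the step you have skipped. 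Second, the ``pairing'' with $H^{n-p,n-q-1}$ of $\Omega_2$ is never constructed, and for it to detect exactness one needs closed-range statements you have not established. The paper avoids both issues by dualizing first: once Theorem~\ref{thm:q_n-q} gives closed range in degrees $q$ and $q+1$, the $L^2$-Serre duality of Theorem~\ref{thm:L2_Serre_Duality} reduces $H^{p,q}_{L^2}(\Omega)=0$ to $H^{n-p,n-q}_{c,L^2}(\Omega)=0$, and the latter is proved directly (Proposition~\ref{prop:q-convex}): a $\dbar_c$-closed form on $\Omega$ extends by zero to $\Omega_1$, is solved there with interior $W^1$ control near $\ol{\Omega_2}$ (Lemma~\ref{lem-qconvex3}), the restriction of that primitive to $\Omega_2$ is a $\dbar$-closed $(n-p,n-q-1)$-form killed by Theorem~\ref{thm:W^1_solvability}, and subtracting the $\dbar$ of its Sobolev extension (Lemma~\ref{lem:extension_operator}) yields a primitive in $\dom\dbar_c^{\Omega}$. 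Note also that the mixed-boundary-condition operator and the modified Morrey--Kohn--H\"ormander identity enter only in establishing Theorem~\ref{thm:W^1_solvability} itself, not in the deduction of the present theorem. You should replace your cutoff reduction by this dual argument (or give an equivalently rigorous construction of your pairing); as written, the vanishing is not proved.
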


On the other hand, we have the following situation in which the cohomology is infinite dimensional:
\begin{thm}\label{thm-infinitedim}
Let $\Omega$ be an annulus in $\mathbb{C}^n$, $n\geq 3$, such that the envelope is weakly $(n-1)$-convex and the hole has $\mathcal{C}^{1,1}$ boundary and is pseudoconvex.  Then for $0\leq p\leq n$, $H_{L^2}^{p,n-1}(\Omega)$ is Hausdorff and infinite-dimensional.
\end{thm}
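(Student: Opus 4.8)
The plan is to treat Hausdorffness and infinite-dimensionality separately. Hausdorffness is immediate from Theorem~\ref{thm:q_n-q} with $q=n-1$: its hypotheses then read ``the envelope is weakly $(n-1)$-convex and the hole is weakly $1$-convex, i.e.\ pseudoconvex, with $\mathcal{C}^{1,1}$ boundary'', which is exactly what we assume. Hence $\dbar$ has closed range in $L^2_{p,n-1}(\Omega)$, so $H^{p,n-1}_{L^2}(\Omega)$ is a Hilbert space in its quotient topology, in particular Hausdorff. For the second assertion it then suffices to exhibit, for every $N$, a linearly independent family of $N$ classes.

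Fix a multi-index $I$ with $\abs{I}=p$ and let $I^c$ be its complement. For $w$ in the (nonempty, open) hole $\Omega_2$, write the Bochner--Martinelli kernel with pole $w$ as $\omega(\zeta-w)=K_w(\zeta)\wedge dz_1\wedge\dots\wedge dz_n$ with $K_w$ of bidegree $(0,n-1)$, and set $\beta_w:=K_w\wedge dz_I$. Then $\beta_w$ is $\mathcal{C}^\infty$ and $\dbar$-closed on $\cx^n\setminus\{w\}$, which contains $\overline\Omega$ since $w\in\Omega_2$; thus $\beta_w\in L^2_{p,n-1}(\Omega)$ and its class $[\beta_w]\in H^{p,n-1}_{L^2}(\Omega)$ is defined. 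Because the hole has $\mathcal{C}^{1,1}$ boundary, its signed distance function $\rho_2$ is $\mathcal{C}^{1,1}$ near $b\Omega_2$, so for small $\eps>0$ the set $S:=\{\rho_2=\eps\}$ is a compact $\mathcal{C}^{1,1}$ hypersurface contained in $\Omega$ that separates $\overline{\Omega_2}$ from $b\Omega_1$ and bounds a domain $D\Subset\Omega_1$ with $\overline{\Omega_2}\subset D$. For a polynomial $h$ on $\cx^n$, define on the $\dbar$-closed forms $f\in L^2_{p,n-1}(\Omega)$ that are $\mathcal{C}^\infty$ on $\Omega$ the functional
\[
  \ell_h(f):=\int_S f\wedge\bigl(h\,dz_{I^c}\bigr).
\]

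Two properties of $\ell_h$ are needed. First, since $\beta_w\wedge(h\,dz_{I^c})=\pm\,h\,\omega(\,\cdot\,-w)$ and $w\in D$, the Bochner--Martinelli reproducing formula yields $\ell_h(\beta_w)=\pm\,c_n\,h(w)$ with $c_n\neq 0$. Second, $\ell_h$ annihilates forms that are $\dbar$-exact in $L^2$: if $f=\dbar u$ for some $u\in L^2_{p,n-2}(\Omega)$, then (replacing $u$ by the $L^2$-minimal solution and invoking interior regularity for the $\dbar$-complex) one may take $u\in\mathcal{C}^\infty(\Omega)$, and since $S$ is a closed manifold, Stokes' theorem gives $\ell_h(f)=\int_S d\bigl(u\wedge(h\,dz_{I^c})\bigr)=0$.

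Granting these two properties, let $w_1,\dots,w_N\in\Omega_2$ be distinct. If $\sum_{j=1}^N c_j[\beta_{w_j}]=0$ in $H^{p,n-1}_{L^2}(\Omega)$, then $\sum_j c_j\beta_{w_j}$ is $\dbar$-exact in $L^2$, so applying $\ell_h$ (by linearity and the two properties above) gives $\sum_j c_j h(w_j)=0$ for every polynomial $h$; choosing $h$ to vanish at each $w_j$ but one (Lagrange interpolation in $\cx^n$) forces every $c_j=0$. Hence $[\beta_{w_1}],\dots,[\beta_{w_N}]$ are linearly independent, and as $N$ is arbitrary, $H^{p,n-1}_{L^2}(\Omega)$ is infinite-dimensional. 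Given Theorem~\ref{thm:q_n-q}, the argument is fairly soft; the point requiring care is the vanishing of $\ell_h$ on exact forms, where one needs that an $L^2$ primitive of a form smooth on $\Omega$ can itself be chosen smooth on $\Omega$, so that the integration by parts over the interior hypersurface $S$ is legitimate. This is a standard consequence of interior regularity for the complex Laplacian; the remaining bookkeeping --- the choice of $S$, orientations, the constant $c_n$ --- is routine.
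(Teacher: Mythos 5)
Your argument is correct, but it takes a genuinely different route from the paper's. You handle Hausdorffness the same way (closed range via Theorem~\ref{thm:q_n-q} with $q=n-1$), but for infinite-dimensionality the paper proceeds by $L^2$ Serre duality (Theorem~\ref{thm:L2_Serre_Duality}): closed range in degrees $n-1$ and $n$ gives a conjugate-linear isomorphism $H^{p,n-1}_{L^2}(\Omega)\cong H^{n-p,1}_{c,L^2}(\Omega)$, and Proposition~\ref{prop-hcinfdim} identifies the latter with the manifestly infinite-dimensional Hilbert space $\mathcal{O}W^1_{n-p}(\Omega_2)$ of holomorphic forms on the hole with $W^1$ coefficients. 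You instead exhibit infinitely many independent classes directly: Bochner--Martinelli kernels $\beta_w$ with poles $w$ in the hole, separated by the functionals $\ell_h(f)=\int_S f\wedge h\,dz_{I^c}$. Your two key properties do check out: $\ell_h(\beta_w)=\pm c_n\,h(w)$ is the Bochner--Martinelli reproducing formula applied to the holomorphic function $h$ on the region bounded by $S$; and $\ell_h$ annihilates $\range\dbar$ because the $L^2$-minimal solution lies in $(\ker\dbar)^\perp=\overline{\range\dbar^*}\subset\ker\dbar^*$ one degree down, so it satisfies an interior elliptic system and is smooth on $\Omega$, after which $\dbar u\wedge h\,dz_{I^c}=d\left(u\wedge h\,dz_{I^c}\right)$ integrates to zero over the closed hypersurface $S$ (which you may as well take to be smooth, e.g.\ a regular level set of a smooth approximation to the distance function, rather than the $\mathcal{C}^{1,1}$ level set of $\rho_2$). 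Note that your pairing is precisely the restriction to polynomial forms of the boundary pairing of Corollary~\ref{cor-pairing}, where $\int_\Omega g\wedge\dbar Ef=-\int_{b\Omega_2}g\wedge f$. What each approach buys: yours is more elementary and self-contained, bypassing the $\dbar_c$-cohomology, the extension operator, and the duality machinery; the paper's yields the stronger conclusion advertised immediately after the theorem statement, namely a complete identification of $H^{p,n-1}_{L^2}(\Omega)$ with $\mathcal{O}W^1_{n-p}(\Omega_2)$, of which your construction detects only the classes dual to polynomials.
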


In fact, we will show that there is a natural conjugate linear isomorphism between $H^{p,n-1}_{L^2}(\Omega)$ with $W^1_{n-p,0}(\Omega_2)\cap\ker\dbar$, the space of holomorphic $(n-p,0)$-forms with coefficients in the Sobolev space $W^1(\Omega_2)$.

Unlike the $q=n-1$ case, the condition of weak $(n-q-1)$-convexity on the inner boundary in Theorem \ref{thm:q-convex} is certainly not sharp, since weak $q$-convexity is not a biholomorphic invariant while vanishing $L^2$-cohomology is invariant (assuming the biholomorphism extends smoothly to the boundary).  However, we do have the following immediate consequence of our Theorem \ref{thm:q_n-q} and Theorem 3.1 in \cite{Hor04}:
\begin{cor}\label{cor-hormander}
Let $\Omega$ be an annulus in $\mathbb{C}^n$, $n\geq 3$, such that for some $q$, $1\leq q\leq n-1$, the envelope is weakly $q$-convex and the hole is weakly $(n-q)$-convex with $\mathcal{C}^{1,1}$ boundary.  Suppose further that there exists a point  in $b\Omega_2$ such that $b\Omega_2$ is $\mathcal{C}^3$ in a neighborhood of this point, and  at which the Levi-form for the hole has exactly $q$ positive eigenvalues and $n-q-1$ negative eigenvalues.  Then for $0\leq p\leq n$, $H_{L^2}^{p,q}(\Omega)$ is Hausdorff and infinite-dimensional.
\end{cor}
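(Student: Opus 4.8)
The plan is to deduce this from Theorem~\ref{thm:q_n-q} together with the local non-solvability theorem of H\"ormander, \cite[Theorem~3.1]{Hor04}; the only computation needed is to identify the Levi form of $b\Omega$ near the distinguished point with the negative of that of $b\Omega_2$. To begin, observe that the hypotheses of Theorem~\ref{thm:q_n-q} hold verbatim here: $\Omega$ is an annulus in $\cx^n$ with $n\geq 3$, its envelope is weakly $q$-convex, and its hole is weakly $(n-q)$-convex with $\mathcal{C}^{1,1}$ boundary. Hence $\dbar$ has closed range in $L^2_{p,q}(\Omega)$, so $H^{p,q}_{L^2}(\Omega)$ is Hausdorff, and it remains only to rule out finite-dimensionality.

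Next I would set up the local picture near the distinguished point $z_0\in b\Omega_2$. Since $\Omega_2\Subset\Omega_1$, the point $z_0$ lies in $\Omega_1$, so on a small ball $V$ about $z_0$ the set $\Omega$ coincides with $(\cx^n\setminus\ol{\Omega_2})\cap V$ and $b\Omega\cap V=b\Omega_2\cap V$; in particular $b\Omega$ is $\mathcal{C}^3$ near $z_0$. If $\rho_2$ is a $\mathcal{C}^3$ defining function for $\Omega_2$ on $V$, then $-\rho_2$ is a defining function for $\Omega$ on $V$, and therefore the Levi form of $b\Omega$ at $z_0$ (taken with respect to $\Omega$) is the negative of the Levi form of $b\Omega_2$ at $z_0$ (taken with respect to $\Omega_2$), as Hermitian forms on the common complex tangent space at $z_0$. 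By hypothesis the latter has exactly $q$ positive and $n-q-1$ negative eigenvalues, so the former is nondegenerate with exactly $q$ negative eigenvalues (and $n-q-1$ positive ones).

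Finally, I would apply \cite[Theorem~3.1]{Hor04} to the bounded open set $\Omega$ at the boundary point $z_0$, with this value of $q$: its hypotheses, in our setting, are that $b\Omega$ be $\mathcal{C}^3$ near $z_0$, that the Levi form there have exactly $q$ negative eigenvalues, and that $\dbar$ have closed range in $L^2_{p,q}(\Omega)$ --- all of which have just been verified --- and its conclusion is that $H^{p,q}_{L^2}(\Omega)$ is infinite-dimensional, which together with Hausdorffness gives the corollary. The one step that calls for care is the bookkeeping in the second paragraph: one must check that the eigenvalue count for the hole becomes, after the sign change in the defining function, exactly the forbidden signature in H\"ormander's theorem, and one should confirm that it is precisely the closed-range conclusion of Theorem~\ref{thm:q_n-q} which ensures that H\"ormander's local obstructions span an infinite-dimensional subspace of $H^{p,q}_{L^2}(\Omega)$, rather than of the a priori non-Hausdorff quotient $\ker\dbar/{\rm range}\,\dbar$. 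Beyond quoting these two results, no further analytic input is required.
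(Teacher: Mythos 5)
Your proposal is correct and follows essentially the same route as the paper: apply Theorem~\ref{thm:q_n-q} to get closed range (hence Hausdorffness), observe that the Levi form of $b\Omega$ at the distinguished point is the negative of that of $b\Omega_2$ and so has exactly $q$ negative and $n-q-1$ positive eigenvalues, and then invoke \cite[Theorem~3.1]{Hor04} to conclude infinite-dimensionality. The sign/eigenvalue bookkeeping you flag as the delicate step is precisely the one-line verification the paper makes.
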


When the Levi-form of the hole has $n-q-1$ negative eigenvalues at a point, the hole can not be weakly $(n-q-1)$-convex.  Hence, this corollary is consistent with our Theorem \ref{thm:q-convex}.  Our Theorem \ref{thm-infinitedim} generalizes the $q=n-1$ case of this corollary to the case in which the hole has only $\mathcal{C}^{1,1}$ boundary.

\begin{proof}[Proof of Corollary~\ref{cor-hormander}]
Denote the envelope of $\Omega$ by $\Omega_1$ and the hole by $\Omega_2$.   Now \cite[Theorem 3.1]{Hor04} states that if there exists a point on $b\Omega_2\subset b\Omega$ at which the Levi-form for $\Omega$ has exactly $q$ negative and $n-q-1$ positive eigenvalues and $b\Omega$ is $\mathcal{C}^3$ in a neighborhood of this point, then $H_{L^2}^{p,q}(\Omega)$ is infinite dimensional whenever $\dbar$ has closed range in $L^2_{p,q}(\Omega)$.  Since the Levi-form for $\Omega$ has $q$ negative and $n-q-1$ positive eigenvalues on $b\Omega_2$ precisely when the Levi-form for $\Omega_2$ has $q$ positive and $n-q-1$ negative eigenvalues, this requirement is guaranteed by our hypotheses.  By Theorem \ref{thm:q_n-q}, we know that that $\dbar$ has closed range in $L^2_{p,q}(\Omega)$, so the conclusion follows.
\end{proof}


\subsection{Statement of Main Identity}
\label{sec:stmtmainidentity}
The key tool in proving the results from the previous section will be an adaptation of the classical Morrey-Kohn-H\"ormander Identity for pseudoconcave boundaries.  Such an adaptation
 has already been computed in the form of an estimate in \cite{Sha85},
 for example.  Our primary contribution will be to carry out a careful computation
 of the error terms in this estimate to show that they are uniformly bounded
 on $\mathcal{C}^2$ domains, which will eventually allow us to obtain estimates on
 $\mathcal{C}^{1,1}$ domains.  Before stating our key identity, we fix some notation that we will use throughout this paper.

Let ${D}$ be a bounded domain in $\cx^n$ with $\mathcal{C}^2$ boundary.
By work of Krantz and Parks \cite{KrPa81}, we know that the signed distance function $\rho$  of $D$ (see \eqref{eq-rhodef}) is $\mathcal{C}^2$ on a neighborhood of $b{D}$.  Let $U$ be a neighborhood of $b{D}$ on which $\rho$ is $\mathcal{C}^2$. After shrinking $U$, we
will assume that $\rho $ is in fact $\mathcal{C}^2$ on  the closure $\ol{U}$.

On $U$,  \cite[Theorem 4.8 (3)]{Fed59} immediately gives us
\begin{equation}\label{eq-drho}
\abs{d\rho}\equiv 1.
\end{equation}
  Let
\begin{equation}\label{eq-Ndef}
N=4 \sum_{j=1}^n \rho_{\bar{j}}\frac{\partial}{\partial z_j}
\end{equation}
be a $(1,0)$ vector field on $U$ which is  normal   to the boundary $b{D}$.
For $j=1,\dots, n$, introduce the tangential vector fields $L_j$ on $U$ by setting
\begin{equation}\label{eq-Ljdef}
L_j = \frac{\partial}{\partial z_j} - \rho_j N.
\end{equation}
On $U$ we may define the functions:
\begin{equation} \label{eq-tracelevi}
\tau=\sum_{j=1}^n\ol{ L_j}\rho_j
\end{equation}
and
\begin{equation} \label{eq-eta_squared}
\eta^2 = \sum_{j,k=1}^n\abs{L_j\rho_k}^2 .
\end{equation}
We will see that $\tau|_{bD}$ represents the trace of the Levi-form (in \eqref{eq:trace_levi_form}, for example).  The quantity $\eta^2$ will play a key role in Theorem \ref{thm:pseudoconcave_identity}.  It depends on those components of the second fundamental form which are not biholomorphically invariant, so it is a Hermitian invariant, but it is not a local CR-invariant (at any given $p\in bD$, one can choose local holomorphic coordinates in which $\eta^2$ vanishes at that $p$).  Hence, the pointwise value of $\eta^2$ seems to have less intrinsic meaning than the quantity $\sup_{U\backslash D}\eta^2$ (see \eqref{eq-Bdef} below).  When $D$ is convex, one can check that $\sup_{U\backslash D}\eta^2=\sup_{bD}\eta^2$, but this does not seem to be true in general, so the size of the neighborhood $U$ is also relevant in our estimates (although this can still be understood geometrically using the concept of reach as in \cite{Fed59}).  When $D$ is the ball, one can check that $\eta^2\equiv 0$.

For any weight function $\phi\in\mathcal{C}^2(U)$, we also define
\[
  \gamma_\phi = \sum_{j=1}^n \ol{L_j}\phi_j.
\]
For a differential operator $X$, let $X^*$ denote the adjoint of $X$ in the weighted space $L^2({D}, e^{-\phi})$.

In general, we adopt the notation of Straube \cite{Str10} and Chen-Shaw \cite{ChSh01} when studying the $L^2$ theory for $\dbar$.  If $u$ is a $(p,q)$-form, we will write $u = \sum_{\abs{J}=p,\abs{K}=q}' u_{J,K}dz^J\wedge d\bar{z}^K$ where $J$ is a multi-index of length $p$, $K$ is a multi-index of length $q$, and $\sum'$ indicates that we are summing only over increasing multi-indices.  We extend $u_{J,K}$ to nonincreasing multi-indices by requiring $u_{J,K}$ to be skew-symmetric with respect to its indices.  We let a scalar linear differential operator $X$ act on a differential form $u = \sum_{\abs{J}=p,\abs{K}=q}' u_{J,K}dz^J\wedge d\bar{z}^K$ coefficientwise, i.e., $Xu= \sum_{\abs{J}=p,\abs{K}=q}' (X u_{J,K})dz^J\wedge d\bar{z}^K$.

We denote by $H\psi$ the complex Hessian of the function $\psi$ on $\cx^n$, i.e., the Hermitian matrix $\psi_{j\ol{k}}= \frac{\partial^2 \psi}{\partial z_j\partial \ol{z_k}}$. Then given a $(p,q)$-form $u$, we have the natural
action
\begin{equation}\label{eq-Hessian}
 H\psi(u,u) = {\sideset{}{'}\sum_{\abs{J}=p,\abs{K}=q-1}} \sum_{j,k=1}^n \psi_{j\ol{k}} u_{J,jK} \ol{u_{J,kK}}
\end{equation}
making $H\psi$ into a Hermitian form acting on the space of $(p,q)$-forms.

Given a weight function $\phi\in\mathcal{C}^2(\overline{D})$, we have the standard $L^2$ inner product on functions
\[
  (u,v)_{L^2({D},e^{-\phi})}=\int_{D} u\bar v e^{-\phi}dV
\]
with norm $\norm{u}_{L^2({D},e^{-\phi})}^2=(u,u)_{L^2({D},e^{-\phi})}$.  This easily extends to $(p,q)$-forms via
\[
  (u,v)_{L^2({D},e^{-\phi})}={\sideset{}{'}\sum_{\abs{J}=p,\abs{K}=q}}(u_{J,K},v_{J,K})_{L^2({D},e^{-\phi})}
\]
with the associated norm.  When the domain ${D}$ is clear from context, we will often abbreviate
\[
  (u,v)_\phi=(u,v)_{L^2({D},e^{-\phi})}\text{ and }\norm{u}_\phi=\norm{u}_{L^2({D},e^{-\phi})}.
\]

With this notation in place, we are ready to state our main identity:
\begin{thm}\label{thm:pseudoconcave_identity}
  Let ${D}\subset\mathbb{C}^n$ be a bounded domain with $\mathcal{C}^2$ boundary, let $\rho$ be the defining function for $D$ given by \eqref{eq-rhodef}, and let $U$ be a bounded neighborhood of $\ol{D}$ such that $\rho$ is $\mathcal{C}^2$ on $\ol U\backslash D$.
   Let $\phi\in \mathcal{C}^2(U{\setminus}{D})$ be a real valued function.  For $1\leq q\leq n$ and $0\leq p\leq n$, let the $(p,q)$-form $u\in \mathcal{C}^1_{p,q}(\ol{U}{\setminus}{D})\cap\dom\dbar^*_\phi$ have compact support in  $U{\setminus}{D}$.  We have
  \begin{align}
  \norm{\dbar u}_{\phi}^2+ \norm{\dbar^*_\phi u}^2_{\phi} &=  -\int_{b{D}}(H\rho-\tau I)(u,u) e^{-\phi}d\sigma\nonumber\\
  &+\int_{U{\setminus}\ol{D}}(H\phi-\gamma_\phi I)(u,u) e^{-\phi} dV-4 \norm{\eta u}_{\phi}^2\nonumber\\
  &+\frac{1}{4}\norm{(\ol{N}+4\tau)u}^2_{\phi}+\sum_{j=1}^n \norm{(\ol{L_j}^*-4\rho_j\tau) u}_{\phi}^2.   \label{eq:pseudoconcave_identity}
  \end{align}
\end{thm}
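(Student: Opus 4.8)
The plan is to start from the standard Morrey-Kohn-Hörmander machinery but to handle the boundary term with the signed distance function $\rho$ rather than an arbitrary defining function, keeping careful track of which terms survive. Concretely, I would begin by writing out the weighted Bochner-Kohn-Morrey formula: for a compactly supported form $u$ in $\mathcal{C}^1_{p,q}(\ol U\setminus D)\cap\dom\dbar^*_\phi$,
\[
\norm{\dbar u}_\phi^2+\norm{\dbar^*_\phi u}_\phi^2
={\sideset{}{'}\sum_{J,K}}\sum_{j,k}\int_{U\setminus D}\partial_{\bar z_k}u_{J,jK}\,\ol{\partial_{\bar z_j}u_{J,kK}}\,e^{-\phi}\,dV
+\int_{U\setminus D}H\phi(u,u)e^{-\phi}\,dV+(\text{boundary term}),
\]
obtained by integration by parts; because we are on the \emph{exterior} side $U\setminus D$ of $bD$, the boundary integrand is governed by $-\rho$ (the signed distance is negative inside, positive outside), and its Hessian contributes $-H\rho(u,u)$ on $bD$. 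The novelty is not this formula but the treatment of the ``interior'' term: rather than discarding the $\sum_{j,k}\int\partial_{\bar z_k}u_{J,jK}\ol{\partial_{\bar z_j}u_{J,kK}}$ term by positivity, I would \emph{decompose} each $\partial/\partial \bar z_j$ into its normal part (a multiple of $\ol N$) and tangential part (the $\ol{L_j}$ of \eqref{eq-Ljdef}), using $\partial/\partial z_j = L_j+\rho_j N$ and hence $\partial/\partial\bar z_j = \ol{L_j}+\rho_{\bar j}\ol N$.

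The second and main step is the bookkeeping of the cross terms produced by this decomposition. Substituting $\partial_{\bar z_j}=\ol{L_j}+\rho_{\bar j}\ol N$ into the quadratic interior term produces four groups: a pure-normal term proportional to $\norm{\ol N u}_\phi^2$, pure-tangential terms $\sum_j\norm{\ol{L_j}u}_\phi^2$-type expressions, and mixed normal-tangential terms. I would rewrite $\sum_j\norm{\ol{L_j}u}^2$ using the identity $\ol{L_j}=(\ol{L_j}^*)^\dagger$ adjusted by the zeroth-order term coming from integrating by parts against $e^{-\phi}\,dV$; this is where $\gamma_\phi=\sum_j\ol{L_j}\phi_j$ and $\tau=\sum_j\ol{L_j}\rho_j$ enter. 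The cross terms between $\ol N$ and the $\ol{L_j}$ are integrated by parts to move derivatives onto $\rho$; here is where third derivatives of $\rho$ would naively appear, and the crucial claim — the ``main innovation'' advertised in the introduction — is that after summing $j,k$ and symmetrizing, all the $\rho_{jk\bar\ell}$-type terms cancel in pairs, leaving only the quantity $\eta^2=\sum_{j,k}\abs{L_j\rho_k}^2$ of \eqref{eq-eta_squared}, which is second-order in $\rho$. Completing the square on the normal direction then produces the term $\tfrac14\norm{(\ol N+4\tau)u}_\phi^2$, and similarly reorganizing the tangential pieces yields $\sum_j\norm{(\ol{L_j}^*-4\rho_j\tau)u}_\phi^2$, with the leftover curvature contribution being exactly $-4\norm{\eta u}_\phi^2$ together with the $-\tau I$ correction in the boundary and interior Hessian terms.

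The step I expect to be the main obstacle is precisely this cancellation of the third-derivative terms of $\rho$. It requires exploiting two structural facts about the signed distance function: first, $\abs{d\rho}\equiv 1$ on $U$ (equation \eqref{eq-drho}), which upon differentiation gives $\sum_j \rho_j\rho_{j\bar k}+\sum_j\rho_{\bar j}\rho_{jk}=0$ and similar relations, so that $N\rho=0$ and many normal derivatives of first derivatives of $\rho$ reduce to tangential ones; and second, the commutator structure $[\ol{L_j},\ol N]$ and $[\ol{L_j},\partial/\partial z_k]$, whose computation must be done coefficientwise on forms and whose zeroth-order pieces must be matched against the square-completion terms. One must be careful that $u\in\dom\dbar^*_\phi$ means $u$ satisfies the appropriate free boundary condition on $bD$ (here the \emph{pseudoconcave} side), so that no uncontrolled boundary integrals appear when integrating by parts in the normal direction; the compact support of $u$ in $U\setminus D$ kills the contributions on $bU$. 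Once the algebra is organized — ideally by first proving the identity for $u$ smooth up to $bD$ and then noting every term makes sense under the stated $\mathcal{C}^2$/$\mathcal{C}^1$ regularity — the identity \eqref{eq:pseudoconcave_identity} follows by collecting terms.
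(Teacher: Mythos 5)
Your proposal follows essentially the same route as the paper: apply the classical Morrey--Kohn--H\"ormander identity on the exterior domain with $-\rho$ as the normalized defining function (producing the $-\int_{bD}H\rho(u,u)$ boundary term), then decompose the gradient term coefficientwise into normal and tangential parts via $\partial/\partial\bar z_j=\ol{L_j}+\rho_{\bar j}\ol N$, integrate by parts on the tangential pieces, observe that the third-derivative terms of $\rho$ cancel leaving only $\eta^2$, and complete squares to produce $\tfrac14\|(\ol N+4\tau)u\|^2_\phi$ and $\sum_j\|(\ol{L_j}^*-4\rho_j\tau)u\|^2_\phi$. The paper organizes the tangential integration by parts through the single commutator identity $\sum_j[\ol{L_j}^*,\ol{L_j}]=(\tau\ol N)^*+\tau\ol N-\gamma_\phi-4\eta^2$, where the cancellation you anticipate is realized by the exact coincidence of $N\tau$ with the third-derivative term $4\sum_{j,k}\ol{L_j}\rho_{jk}\rho_{\bar k}$ arising from $\sum_j\ol{L_j}\mu_j$; your description of the mechanism (cross terms, symmetrization) is looser but aims at the same cancellation.

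Two points need repair. First, a genuine gap: the intermediate computation you describe necessarily differentiates $\tau$ and the zeroth-order parts of the adjoints $\ol{L_j}^*$, which involves \emph{three} derivatives of $\rho$, and this is not available on a $\mathcal{C}^2$ domain. Your closing remark about first proving the identity for $u$ smooth up to $bD$ addresses regularity of $u$, not of $\rho$. The paper resolves this by first establishing the key integration-by-parts lemma on $\mathcal{C}^3$ domains and then approximating: mollify the signed distance function to produce nearby smooth domains $D_j$ whose signed distance functions converge in $\mathcal{C}^2$ on compact sets, so that $\tau_j\to\tau$ and $\eta_j\to\eta$ and the identity passes to the limit. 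Without some such step your argument does not reach the stated $\mathcal{C}^2$ hypothesis. Second, a factual slip: with the normalization $|d\rho|\equiv1$ and $N=4\sum_j\rho_{\bar j}\partial/\partial z_j$ one has $N\rho=1$, not $N\rho=0$ (it is $L_j\rho=0$ that expresses tangentiality); the value $N\rho=1$ is what fixes the sign and coefficient of the boundary term $\int_{bD}\tau|u|^2e^{-\phi}d\sigma$ when you integrate $(\tau\ol N)^*$ by parts, so this is not a harmless typo in the computation you would actually have to carry out.
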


  For the sake of clarity, we emphasize that $U{\setminus}{D}$ is {\em not} an open set,
   so a form with compact support in $U{\setminus}{D}$ does not necessarily vanish on $b{D}$.


\section{Integration by Parts and the Gradient}
\label{sec:int_by_parts}

\label{sec:background}
We continue to use the notation established in Section~\ref{sec:stmtmainidentity}.  In this section, we will prove the following key identity, which will then be used in the next section in the proof of Theorem~\ref{thm:pseudoconcave_identity}:
\begin{lem}
\label{lem:int_by_parts_gradient}
  Let ${D}\subset\mathbb{C}^n$ be a bounded domain with $\mathcal{C}^2$ boundary, let $\rho$ be the signed distance function for ${D}$, and let $U$ be a neighborhood of $\ol{D}$ such that $\rho$ is $\mathcal{C}^2$ on $U\backslash\overline D$.  Let $\phi\in \mathcal{C}^2(\ol{U}{\setminus}{D})$ be a real valued function.  For any $v\in \mathcal{C}^1_0(U)$, we have
  \begin{multline}
  \label{eq:int_by_parts_gradient}
    \sum_{j=1}^n \norm{\ol{L_j}v}_{\phi}^2  =\sum_{j=1}^n \norm{(\ol{L_j}^*-4\rho_j\tau) v}_{\phi}^2+  4\norm{\tau v}_{\phi}^2\\ + \int_{b\Omega} \tau\abs{v}^2 e^{-\phi} d\sigma  + 2 \Re \gipr{\tau v, \ol{N} v}_{\phi} -\gipr{\left(\gamma_\phi + 4 \eta^2\right)v,v}_{\phi}.
  \end{multline}
\end{lem}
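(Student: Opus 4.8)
The plan is to derive \eqref{eq:int_by_parts_gradient} by a direct integration-by-parts computation, carefully tracking the error terms that arise when the vector fields $L_j$ fail to commute with each other and with $N$. The starting point is to expand $\sum_j \norm{\ol{L_j}v}_\phi^2 = \sum_j (\ol{L_j}v,\ol{L_j}v)_\phi$ and integrate by parts once to write it as $\sum_j (v,\ol{L_j}^*\ol{L_j}v)_\phi$. Since we want to land on $\sum_j\norm{(\ol{L_j}^*-4\rho_j\tau)v}_\phi^2$, the natural move is instead to compute $\sum_j\norm{(\ol{L_j}^*-4\rho_j\tau)v}_\phi^2$ directly, expand the square, and compare with $\sum_j\norm{\ol{L_j}v}_\phi^2$; the difference will involve the commutator-type expression $\sum_j([\ol{L_j},\ol{L_j}^*] + \text{zeroth-order terms})$ applied to $v$ and paired with $v$. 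Here I would use the explicit formula \eqref{eq-Ljdef}, $\ol{L_j} = \partial/\partial\ol{z_j} - \rho_{\bar j}\ol N$, together with $\ol N = 4\sum_k \rho_k \partial/\partial\ol{z_k}$, to get a workable expression for $\ol{L_j}^*$ in the weighted space (the adjoint of $\partial/\partial\ol{z_j}$ is $-\partial/\partial z_j + \phi_j$, up to boundary terms, but since $v$ has compact support in $U$ there are actually no boundary contributions from $v$ itself).

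The key algebraic identities to keep at hand are: $\sum_j \rho_j\rho_{\bar j}$-type normalizations coming from $|d\rho|\equiv 1$ in \eqref{eq-drho} (which gives $\sum_j \rho_j\rho_{\bar j} = 1/4$ in the normalization implicit in \eqref{eq-Ndef}), the definition of $\tau$ in \eqref{eq-tracelevi} as $\sum_j\ol{L_j}\rho_j$, the definition of $\eta^2$ in \eqref{eq-eta_squared}, and the definition of $\gamma_\phi = \sum_j\ol{L_j}\phi_j$. The boundary term $\int_{b\Omega}\tau|v|^2 e^{-\phi}\,d\sigma$ (which should really read $\int_{bD}$) will appear because although $v$ has compact support in $U$, the vector fields $\ol{L_j}$ and $\ol{L_j}^*$ are only being integrated over $U\setminus \ol D$ — wait, the lemma is stated over $U$ with $v\in\mathcal{C}^1_0(U)$, so I need to be careful about the domain of integration. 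Re-reading: the inner products are over $D$ — no, the weight is $e^{-\phi}$ with $\phi\in\mathcal{C}^2(\ol U\setminus D)$, so the integration must be over $U\setminus D$, and $v$ compactly supported in $U$ leaves a genuine boundary $bD$. The $bD$ boundary terms then come from integrating $\partial/\partial z_j$ by parts against the weighted measure on $U\setminus D$, picking up $\int_{bD}(\cdots)\nu_j\,d\sigma$ where $\nu_j$ relates to $\rho_j$. Collecting all the $\rho_j$-contracted boundary terms and using $\tau = \sum\ol{L_j}\rho_j$ will produce exactly $\int_{bD}\tau|v|^2 e^{-\phi}\,d\sigma$.

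The main obstacle — and the real point of the lemma, as the introduction emphasizes — is showing that the third derivatives of $\rho$ that appear in the commutators $[\ol{L_j},\ol{L_j}^*]$ cancel. When one computes $[\partial/\partial\ol{z_j} - \rho_{\bar j}\ol N, \, -\partial/\partial z_j + \rho_j N + \cdots]$, terms like $(\ol N\rho_{\bar j})$ and $N\rho_j$ and $\partial_{\bar j}(\rho_j \cdots)$ appear, each a priori involving $\rho_{j\bar k \ell}$. I expect the cancellation to work out because $\tau$ itself, when differentiated, reproduces these third-derivative combinations with opposite signs, so that after assembling everything the net second-order-in-$v$ contribution is $2\Re(\tau v, \ol N v)_\phi$, the $\gamma_\phi$ term comes from the single commutator of $\partial/\partial\ol{z_j}$ with the weight-adjusted $\partial/\partial z_j$, and the leftover genuinely second-derivative-of-$\rho$ term assembles into $-4\eta^2|v|^2$ via $\sum_{j,k}|L_j\rho_k|^2$ together with $4\norm{\tau v}_\phi^2$ from the $(4\rho_j\tau)$ cross terms. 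I would organize the computation by: (i) expanding $\norm{(\ol{L_j}^*-4\rho_j\tau)v}_\phi^2 = \norm{\ol{L_j}^* v}_\phi^2 - 8\Re(\rho_j\tau v,\ol{L_j}^* v)_\phi + 16\norm{\rho_j\tau v}_\phi^2$ summed over $j$; (ii) using $\sum_j\norm{\ol{L_j}^*v}_\phi^2 = \sum_j\norm{\ol{L_j}v}_\phi^2 + \sum_j(v,[\ol{L_j},\ol{L_j}^*]v)_\phi$ plus the boundary term; (iii) computing the commutator sum explicitly, isolating the third-derivative terms and verifying their cancellation against the derivatives of $\tau$ hidden in the cross term $-8\Re\sum_j(\rho_j\tau v,\ol{L_j}^*v)_\phi$; (iv) identifying the surviving second-derivative terms as $-4\eta^2$ and collecting the $\tau$-terms. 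Step (iii) is where all the care goes, and I would present it as a sequence of displayed identities rather than prose, being especially vigilant that $\ol{L_j}^*$ is not simply $-L_j$ but carries both the weight term $\phi_j$ and the lower-order correction coming from $\rho_{j}N$ having nonconstant coefficients.
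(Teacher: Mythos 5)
Your overall architecture matches the paper's: compute the adjoints of $N$ and $L_j$ explicitly, reduce everything to the commutator sum $\sum_j[\ol{L_j}^*,\ol{L_j}]$, and split $\ol{L_j}^*$ into tangential and normal parts to produce the $(\ol{L_j}^*-4\rho_j\tau)$ and $4\norm{\tau v}_\phi^2$ terms. But step (iii) locates the third-derivative cancellation in the wrong place, and as described it would fail. The cross term $-8\Re\sum_j(\rho_j\tau v,\ol{L_j}^*v)_\phi$ contains \emph{no} derivatives of $\tau$ at all: since $\ol{L_j}$ is tangential, $(\rho_j\tau v,\ol{L_j}^*v)_\phi=(\ol{L_j}(\rho_j\tau v),v)_\phi$ with no boundary term, and both $\sum_j\rho_j(\ol{L_j}\tau)$ and $\sum_j\rho_j\tau\ol{L_j}v$ vanish identically because $\sum_j\rho_j\ol{L_j}=0$ (the conjugate of \eqref{eq-rhojLj}); only $\sum_j(\ol{L_j}\rho_j)\tau v=\tau^2v$ survives, so the cross term is exactly $-8\norm{\tau v}_\phi^2$ and cannot absorb anything. (Indeed the paper sidesteps the expansion entirely by showing $\sum_j\rho_{\ol j}\ol{L_j}^*=\tau$ is a zeroth-order operator, which makes the splitting of $\ol{L_j}^*$ pointwise orthogonal.) The actual mechanism that removes the third derivatives is different: after simplification via \eqref{eq-diff1}, all third-derivative contributions to the commutator collapse into the single multiplication operator $N\tau$, and the combination $-\tau N-(N\tau)-4\ol{\nu}\tau$ is recognized as the operator $\ol{N}^*(\tau\,\Cdot)=(\tau\ol{N})^*$. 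Pairing $((\tau\ol{N})^*v,v)_\phi$ with $v$ and undoing the adjoint — one more integration by parts, this time in the normal direction — is what converts $N\tau$ into first-order data and is the sole source of both the boundary term $\int_{bD}\tau|v|^2e^{-\phi}d\sigma$ and the term $2\Re\gipr{\tau v,\ol{N}v}_\phi$. Without this step you are left with an uncancelled $\int(N\tau)|v|^2e^{-\phi}dV$ involving three derivatives of $\rho$, which does not appear in \eqref{eq:int_by_parts_gradient}.

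A second gap: your computation requires $\rho\in\mathcal{C}^3$ (the commutators differentiate the coefficients of $\ol{L_j}^*$, which already contain $\rho_{jk}$), but the lemma is stated for $\mathcal{C}^2$ boundaries, where $N\tau$ and the other intermediate quantities need not exist classically. The paper devotes Section~3.1 to this: it mollifies the signed distance function, uses Federer's Lipschitz bound $|\rho(z)-\rho(w)|\leq|z-w|$ to show the mollifications $\tilde\rho_j$ are defining functions for smooth domains $D_j$ with $\tilde\rho_j\to\rho$ in $\mathcal{C}^2$ on compacta (hence $\tau_j\to\tau$, $\eta_j\to\eta$), proves the identity on each $D_j$, and passes to the limit — which is legitimate precisely because the final identity involves only two derivatives of $\rho$. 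Your proof as written establishes the lemma only for $\mathcal{C}^3$ domains, and since the low-regularity case is the point of the paper, this reduction needs to be supplied.
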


\subsection{Reduction to \texorpdfstring{$\mathcal{C}^3$}{C3} Domains}
\label{sec:reduction_to_C3}

Suppose that Lemma~\ref{lem:int_by_parts_gradient} has already been proven for domains with
 $\mathcal{C}^3$ boundaries.  Let ${D}$, $U$, and $\rho$ be as in the
  statement of Lemma~\ref{lem:int_by_parts_gradient}.  Let $\chi\in\mathcal{C}^\infty_0(\mathbb{C}^n)$ be a nonnegative function satisfying $\int\chi=1$, and for $j\in\mathbb{N}$ define $\chi_j(z)=j^{2n}\chi(j z)$ so that $\int\chi_j=1$ for every $j\in\mathbb{N}$.  We may assume that $\chi$ is supported in a ball of radius $1$, so $\chi_j$ is supported in a ball of radius $j^{-1}$.  Set
\[
  U_j=\set{z\in U:\dist(z,bU)>j^{-1}}.
\]
On $U_j$, let $\tilde\rho_j=\rho\ast\chi_j$.  If $z-w$ is in the support of $\chi_j$ for $z,w\in U$, then $\abs{z-w}\leq j^{-1}$, so $\abs{\rho(z)-\rho(w)}\leq\abs{z-w}\leq j^{-1}$ (see   \cite[Theorem~4.8(1)]{Fed59}).  Hence,
\begin{equation}
\label{eq:rho_j_estimate}
  \abs{\rho(z)-\tilde\rho_j(z)}=\abs{\int_{\mathbb{C}^n}(\rho(z)-\rho(w))\chi_j(z-w)dV_w}\leq j^{-1}.
\end{equation}
Using similar reasoning, there exists $C>0$ such that for $j$ sufficiently large and $z\in U_j$, we have
\begin{equation}
\label{eq:grad_rho_j_estimate}
  \abs{\nabla\rho(z)-\nabla\tilde\rho_j(z)}\leq C j^{-1}.
\end{equation}
Since $U$ is an open neighborhood of $b{D}$, there must exist $\eps>0$ such that the pre-image of the interval $[-\eps,\eps]$ under $\rho$ is compact in $U$.  For $j>\eps^{-1}$, we can use \eqref{eq:rho_j_estimate} to show $\tilde\rho_j(z)>0$ when $\rho(z)=\eps$ and $\tilde\rho_j(z)<0$ when $\rho(z)=-\eps$, and for $j>C$ we can use \eqref{eq:grad_rho_j_estimate} to show that $\nabla\tilde\rho_j\neq 0$ on $U_j$.  Hence, for $j$ sufficiently large $\tilde\rho_j$ is a smooth defining function for a domain ${D}_j$ with smooth boundary such that $b{D}_j\subset U_j$.  On any compact $K\subset U$, $\norm{\tilde\rho_j-\rho}_{\mathcal{C}^2(K)}\rightarrow 0$.  Hence, if $\rho_j$ is the signed distance function for ${D}_j$, then we will also have $\norm{\tilde\rho_j-\rho_j}_{\mathcal{C}^2(K)}\rightarrow 0$.  Hence, if $\tau_j$ and $\eta_j$ are the natural generalizations of \eqref{eq-tracelevi} and \eqref{eq-eta_squared}, we will have $\eta_j\rightarrow\eta$ and $\tau_j\rightarrow\tau$.  Therefore, we may apply Lemma~\ref{lem:int_by_parts_gradient} to each ${D}_j$ and take the limit as $j\rightarrow\infty$ to obtain Lemma~\ref{lem:int_by_parts_gradient} when ${D}$ has $\mathcal{C}^2$ boundary.

Henceforth, we assume that ${D}$ has $\mathcal{C}^3$ boundary and that $\rho$ is $\mathcal{C}^3$ on $U$.  This will be needed, e.g., in \eqref{eq:L_j^*_L_j_commutator}, which requires three derivatives of the defining function.

\subsection{Elementary observations }

Note that from \eqref{eq-drho} we have
\begin{equation}\label{eq-rhojsquare}
 \sum_{\ell=1}^n \abs{\rho_\ell}^2 = \sum_{\ell=1}^n \abs{\rho_{\bar{\ell}}}^2 = \frac{1}{4}.
 \end{equation}
Differentiating with respect to  $z_j$, we see that for each $j=1,\dots, n$, we have
\begin{equation} \label{eq-diff1}
\sum_{\ell=1}^n \left( \rho_{\ell} \rho_{\ol{\ell} j} + \rho_{\ell j} \rho_{\ol{\ell}}\right)=0.
\end{equation}

With $N$ defined by \eqref{eq-Ndef}, we have
\[ N\rho =1, \text{ and } \norm{N}= \sqrt{2}.\]
With $L_j$ defined by \eqref{eq-Ljdef}, $L_j\rho = \rho_j - \rho_j N\rho = 0$, i.e. $L_j$ is a $(1,0)$-vector field tangent to the boundary.

The $L_j$'s are not  linearly independent. In fact
\begin{equation}\label{eq-rhojLj}
\sum_{j=1}^n \rho_{\bar j} L_j=0.
\end{equation}

\subsection{Pointwise computations preliminary to the proof of Lemma~\ref{lem:int_by_parts_gradient}}

Our primary goal in this section is to compute the sum of commutators $\sum_{j=1}^n[\ol L_j^*,\ol{L_j}]$.  This will arise naturally in the integration by parts argument needed to prove Lemma~\ref{lem:int_by_parts_gradient}.  Because the computation is somewhat lengthy, we will break it down into several pieces.

We begin by computing representation formulas for the adjoints of $N$ and $L_j$.  Recall that $X^*$ denotes the adjoint of a differential operator $X$ in the weighted spaces $L^2({D}, e^{-\phi})$.  We will use $X^{*,0}$ to denote the adjoint in the unweighted
spaces $L^2({D})$, i.e., when $\phi\equiv 0$.  Also, for a function $\psi$ we will denote by
$ \psi \Cdot$
the operator that multiplies a function or form by $\psi$.

We first note that
\[
N^*= e^{\phi}N^{*,0}(e^{-\phi}\,{\Cdot})
= - e^{\phi}\sum_{k=1}^n\ \frac{\partial}{\partial \ol{z_k}}
 (4\rho_k  e^{-\phi}\,\Cdot ),
\]
so
\begin{equation}
N^*= -\ol{N}  - 4 \nu\, \Cdot \label{eq-nstar}
\end{equation}
where
\begin{equation}
\label{eq-nu}
\nu =\sum_{k=1}^n (\rho_{k\ol{k}}- \rho_k\phi_{\ol{k}}).
\end{equation}

Since $\ol{L_j} = \frac{\partial}{\partial \ol{z_j}} - \rho_j \ol{N}$, we have
\[ \overline{L}_j^{*} =\left(\frac{\partial}{\partial \ol{z_j}}\right)^{*} - \ol{N}^{*}(\rho_j \Cdot)\]
We know
\[
  \left(\frac{\partial}{\partial \ol{z_j}}\right)^{*}=-e^{\phi}\frac{\partial}{\partial z_j}(e^{-\phi}\,\Cdot)=- \frac{\partial}{\partial z_j} +\phi_j \Cdot
\]
so combining this with \eqref{eq-nstar} we obtain
\begin{align*} \overline{L}_j^{*} &= - \frac{\partial}{\partial z_j} +\phi_j \Cdot + (N+4\ol{\nu}) (\rho_j \Cdot)\\
&= - \left( \frac{\partial}{\partial z_j} - \rho_j N\right) + \phi_j \Cdot+  N (\rho_j \Cdot) + 4\ol{\nu}\rho_j \Cdot\\
&= - L_j  + \phi_j\Cdot+  4 \left( \sum_{k=1}^n \rho_{\ol{k}}\rho_{jk} + \rho_j \ol{\nu} \right) \Cdot\end{align*}
Therefore  we obtain the representation
\begin{equation}\label{eq-Ljstar}
\ol{L_j}^* = -L_j + \mu_j\, \Cdot
\end{equation}
where $\mu_j$ is given by
\begin{equation}\label{eq-muj}
\mu_j=\phi_j+ 4\sum_{k=1}^n \rho_{jk}\rho_{\ol{k}}+ 4\rho_j \ol{\nu}.
\end{equation}
Note that thanks to \eqref{eq-diff1} applied to the middle term in \eqref{eq-muj}, we have the
alternative expression involving the complex Hessian for $\rho$:
\begin{equation}\label{eq-mujbis}
\mu_j=\phi_j- 4\sum_{k=1}^n \rho_{j\ol{k}}\rho_{{k}}+ 4\rho_j \ol{\nu}.
\end{equation}

Returning to our goal for this section, we use \eqref{eq-Ljstar} to show that
\begin{align}
  \sum_{j=1}^n[\ol L_j^*,\ol{L_j}]&=\sum_{j=1}^n[-L_j + \mu_j\Cdot,\ol{L_j}]\nonumber\\
  &=-\sum_{j=1}^n[L_j,\ol{L_j}]-\sum_{j=1}^n\ol{L_j}\mu_j\label{eq:L_j^*_L_j_commutator}
\end{align}
Thus, there are two terms that we need to compute in order to find this commutator.

To evaluate \eqref{eq:L_j^*_L_j_commutator}, we first consider $\sum_{j=1}^n[L_j,\ol{L_j}]$.  We compute
\begin{align*}
\sum_{j=1}^n [L_j, \ol{L_j}] &=\sum_{j=1}^n\left \{ L_j \left( \frac{\partial}{\partial \ol{z_j}} - \rho_{\ol{j}}\ol{N}\right) - \ol{L_j} \left( \frac{\partial}{\partial z_j}- \rho_{j} {N}\right) \right\}\\
&= \sum_{j=1}^n\left \{ L_j  \frac{\partial}{\partial \ol{z_j}} - \ol{L_j}  \frac{\partial}{\partial z_j} \right\}
+\sum_{j=1}^n\left\{- L_j  \rho_{\ol{j}}\ol{N} - \rho_{\ol{j}}L_j \ol{N}  + \ol{L_j}\rho_{j} {N} +\rho_j \ol{L_j}N \right\}.
\end{align*}
Using \eqref{eq-rhojLj}, we have
\[
\sum_{j=1}^n [L_j, \ol{L_j}] = \sum_{j=1}^n\left \{ L_j  \frac{\partial}{\partial \ol{z_j}} - \ol{L_j}  \frac{\partial}{\partial z_j} \right\}
+\sum_{j=1}^n\left\{- L_j  \rho_{\ol{j}}\ol{N}  + \ol{L_j}\rho_{j} {N} \right\}.
\]
Since $\sum_{j=1}^n \left( -L_j\rho_{\ol{j}} \ol{N} + \ol{L_j}\rho_j N\right)= \tau (N-\ol{N})$ and
\[
\sum_{j=1}^n\left \{ L_j  \frac{\partial}{\partial \ol{z_j}} - \ol{L_j}  \frac{\partial}{\partial z_j} \right\}= \sum_{j=1}^n\left[ \frac{\partial}{\partial z_j}, \frac{\partial}{\partial\ol{ z_j}}\right] -  4\sum_{j,k=1}^n\rho_j\rho_{\ol{k}}  \frac{\partial^2}{\partial z_k\partial \ol{z_j}} +  4\sum_{j,k=1}^n\rho_{\ol{j}}\rho_{{k}}  \frac{\partial^2}{\partial \ol{z_k}\partial{z_j}}
=0,
\]
we obtain
\begin{equation}\label{eq-commllbar}\sum_{j=1}^n [L_j, \ol{L_j}] = \tau (N-\ol{N}).\end{equation}

Before turning our attention to $\sum_{j=1}^n \ol{L_j}\mu_j$ in \eqref{eq:L_j^*_L_j_commutator}, we will find it helpful to compute $N\tau$.  We observe that
\begin{align*}
N\tau &= 4 \sum_{j,k=1}^n \rho_{\bar{k}} \frac{\partial}{\partial z_k} \left( \rho_{j \ol{j}} - \sum_{\ell=1}^n 4 \rho_{\ol{j}} \rho_{\ell}\rho_{j\ol{\ell}}\right)\nonumber\\
&= 4 \sum_{j,k=1}^n \left\{ \rho_{\ol{k}} \rho_{j \ol{j}k} - 4 \rho_{\ol{k}} \sum_{\ell=1}^n \left( \rho_{\ol{j}k} \rho_{\ell}\rho_{j \ol{\ell}} + \rho_{\ol{j}}\rho_{\ell k} \rho_{j \ol{\ell}} + \rho_{\ol{j}} \rho_{\ell} \rho_{j \ol{\ell}k} \right) \right\}.
\end{align*}
If we re-index and apply \eqref{eq-diff1}, we see that
\[
  4 \sum_{j,k,\ell=1}^n \left\{- 4 \rho_{\ol{k}} \left( \rho_{\ol{j}k} \rho_{\ell}\rho_{j \ol{\ell}} + \rho_{\ol{j}}\rho_{\ell k} \rho_{j \ol{\ell}}\right) \right\}=-16 \sum_{j,k,\ell=1}^n \left\{ \left( \rho_{\ol{k}}\rho_{\ol{j}k} \rho_{\ell}\rho_{j \ol{\ell}} + \rho_{\ol{\ell}}\rho_{\ol{k}}\rho_{j \ell} \rho_{k \ol{j}}\right) \right\}=0,
\]
so
\begin{equation}
\label{eq-ntau}
N\tau =  4 \sum_{j,k=1}^n  \rho_{\ol{k}} \rho_{j \ol{j}k} - 16 \sum_{j,k,\ell=1}^n \rho_{\ol{k}} \rho_{\ol{j}} \rho_{\ell} \rho_{j \ol{\ell}k}.
\end{equation}

Now we are ready to compute $\sum_{j=1}^n \ol{L_j}\mu_j$, thus completing our computation of \eqref{eq:L_j^*_L_j_commutator}.  We will show:
\begin{equation}\label{eq-ljmuj}
\sum_{j=1}^n \ol{L_j}\mu_j = \gamma_\phi + N\tau+ 4 \eta^2 +4 \tau\ol{\nu}.
\end{equation}
To show this, we first compute
\[
\sum_{j=1}^n \ol{L_j}\mu_j = \sum_{j=1}^n \ol{L_j}\phi_j+4 \sum_{j,k=1}^n \left(\ol{L_j} \rho_{jk}\rho_{\ol{k}}+ \rho_{jk}\ol{L_j} \rho_{\ol{k}}\right) + 4\sum_{j=1}^n\left(\ol{L_j} \rho_j \ol{\nu} + \rho_j \ol{L_j} \ol{\nu} \right)
\]
Thanks to \eqref{eq-rhojLj}, the final term in this expression vanishes when summed over $j$.  Using the notation of \eqref{eq-tracelevi}, we may rewrite this as:
\begin{equation}
\sum_{j=1}^n \ol{L_j}\mu_j = \gamma_\phi+4 \sum_{j,k=1}^n \ol{L_j} \rho_{jk}\rho_{\ol{k}}+ 4 \sum_{j,k=1}^n\rho_{jk}\ol{L_j} \rho_{\ol{k}} +4 \tau\ol{\nu}. \label{eq-ljmuj2}
\end{equation}
The second term in \eqref{eq-ljmuj2} may be rewritten as
\[
4 \sum_{j,k=1}^n \ol{L_j} \rho_{jk}\rho_{\ol{k}} =   4 \sum_{j,k=1}^n  \rho_{\ol{k}} \rho_{j \ol{j}k} - 16 \sum_{j,k,\ell=1}^n \rho_{\ol{k}} \rho_{\ol{j}} \rho_{\ell} \rho_{j \ol{\ell}k}.
\]
Using \eqref{eq-ntau}, we have
\begin{equation}
4 \sum_{j,k=1}^n \ol{L_j} \rho_{jk}\rho_{\ol{k}} =N\tau \label{eq-ntau2}.
\end{equation}
Using the fact that $\rho_{jk} = L_j \rho_k + \rho_j N \rho_k$, the third term in \eqref{eq-ljmuj2} may be rewritten as
\[
\sum_{j,k=1}^n\rho_{jk}\ol{L_j} \rho_{\ol{k}}= \sum_{j,k=1}^nL_j\rho_k  \ol{L_j} \rho_{\ol{k}} + \sum_{j,k=1}^n\rho_j N \rho_k \ol{L_j} \rho_{\ol{k}},
\]
but the final term in this expression vanishes by \eqref{eq-rhojLj}, so from \eqref{eq-eta_squared} we have
\begin{equation}
\sum_{j,k=1}^n\rho_{jk}\ol{L_j} \rho_{\ol{k}}= \eta^2. \label{eq-eta}
\end{equation}
Putting \eqref{eq-ntau2} and \eqref{eq-eta} in \eqref{eq-ljmuj2}, the result \eqref{eq-ljmuj}	follows.

Substituting \eqref{eq-commllbar} and \eqref{eq-ljmuj} into \eqref{eq:L_j^*_L_j_commutator}, we have the fundamental identity
\[
  \sum_{j=1}^n[\ol L_j^*,\ol{L_j}]=-\tau (N-\ol{N})-\left(\gamma_\phi + N\tau+ 4 \eta^2 +4 \tau\ol{\nu}\right)
\]
Using \eqref{eq-nstar}, we can rewrite this as
\begin{equation}
  \sum_{j=1}^n[\ol L_j^*,\ol{L_j}]=(\tau\ol{N})^*+\tau\ol{N}-\gamma_\phi - 4 \eta^2\label{eq:commlbar^*lbar}
\end{equation}

\subsection{Proof of Lemma~\ref{lem:int_by_parts_gradient}}

Let $v$ be a smooth function that is compactly supported in $U$.     Since the $L_j$'s are tangential, we can integrate by parts with no boundary term and obtain
\[
  \sum_{j=1}^n \norm{\ol{L_j}v}_{{\phi}}^2= \gipr{\left(\sum_{j=1}^n [\ol{L_j}^*,\ol{L_j}]\right)v,v}_{{\phi}}  +\sum_{j=1}^n\norm{\ol{L_j}^* v}^2_{{\phi}}
\]
Using \eqref{eq:commlbar^*lbar}, we have
\begin{equation}
\label{eq:precommutator}
  \sum_{j=1}^n \norm{\ol{L_j}v}_{{\phi}}^2= \gipr{\left((\tau\ol{N})^*+\tau\ol{N}-\gamma_\phi - 4 \eta^2\right)v,v}_{{\phi}}  +\sum_{j=1}^n\norm{\ol{L_j}^* v}^2_{{\phi}}
\end{equation}
When we integrate by parts with $(\tau\ol{N})^*$, we will need to consider a boundary term.  Since $-\rho$ is a normalized defining function for the complement of ${D}$ and $v$ is compactly supported in $U$, we have
\[
  \gipr{v,\tau\ol{N}v}_{{\phi}}=\gipr{(\tau\ol{N})^*v,v}_{{\phi}}+\int_{b(U{\setminus}\ol{D})}\tau\abs{v}^2 N(-\rho) e^{-\phi} d\sigma.
\]
Since $N\rho=1$ and $\int_{b(U{\setminus}\ol{D})} \tau\abs{v}^2 e^{-\phi} d\sigma=\int_{b{D}} \tau\abs{v}^2 e^{-\phi} d\sigma$, we have
\[
  \gipr{v,\tau\ol{N}v}_{{\phi}}=\gipr{(\tau\ol{N})^*v,v}_{{\phi}}-\int_{b{D}}\tau\abs{v}^2 e^{-\phi} d\sigma.
\]
Substituting in \eqref{eq:precommutator}, we obtain
\begin{multline}
  \sum_{j=1}^n \norm{\ol{L_j}v}_{{\phi}}^2=\int_{b{D}} \tau\abs{v}^2 e^{-\phi} d\sigma+ 2\Re\gipr{\tau\ol{N}v,v}_{{\phi}}\\
  -\gipr{\left(\gamma_\phi + 4 \eta^2\right)v,v}_{{\phi}}  +\sum_{j=1}^n\norm{\ol{L_j}^* v}^2_{{\phi}} \label{eq-commutator}.
\end{multline}

We now wish to decompose $\ol{L_j}^*$ into its tangential and normal components.  Using \eqref{eq-Ljstar}, we have
\[
 \sum_{j=1}^n \rho_{\bar j}\ol{L_j}^* = \sum_{j=1}^n \rho_{\ol{j}}(-L_j+\mu_j).
 \]
 Therefore, \eqref{eq-rhojLj} implies
\[
 \sum_{j=1}^n \rho_{\bar j}\ol{L_j}^* =\sum_{j=1}^n\rho_{\bar j}\mu_j.
 \]
 Substituting \eqref{eq-mujbis} and \eqref{eq-rhojsquare} gives us
\[
 \sum_{j=1}^n \rho_{\bar j}\ol{L_j}^* = \sum_{j=1}^n \rho_{\ol{j}}\phi_j - 4 \sum_{j,k=1}^n \rho_{\ol{j}}\rho_{j\ol{k}}\rho_{k} + \ol{\nu}, \]
 and substituting \eqref{eq-nu} gives us
\[
 \sum_{j=1}^n \rho_{\bar j}\ol{L_j}^* = \sum_{j=1}^n \rho_{\ol{j}j} - 4 \sum_{j,k=1}^n \rho_{\ol{j}}\rho_{j\ol{k}}\rho_{k}= \sum_{j=1}^n \ol{L_j}\rho_j.
 \]
Hence, we have
\begin{equation}\label{eq-rhobarjLjstar}
  \sum_{j=1}^n \rho_{\bar j}\ol{L_j}^*=\tau.
\end{equation}

By orthogonal decomposition, we have
\[
  \sum_{j=1}^n \abs{\ol{L_j}^* v}^2=\sum_{j=1}^n \abs{\left(\ol{L_j}^*-4\rho_j\sum_{k=1}^n\rho_{\bar k}\ol{L_k}^*\right) v}^2+4\abs{\sum_{j=1}^n\rho_{\bar j}\ol{L_j}^* v}^2,
\]
so \eqref{eq-rhobarjLjstar} gives us
\[
  \sum_{j=1}^n \abs{\ol{L_j}^* v}^2= \sum_{j=1}^n \abs{(\ol{L_j}^*-4\rho_j\tau) v}^2+4\abs{\tau v}^2.
\]
Therefore, we have
\begin{equation}\label{eq-new1}
\sum_{j=1}^n\norm{\ol{L_j}^* v}^2_{{\phi}}   =  \sum_{j=1}^n \norm{(\ol{L_j}^*-4\rho_j\tau) v}_{{\phi}}^2+4\norm{\tau v}_{{\phi}}^2.
\end{equation}
Combining \eqref{eq-commutator} and \eqref{eq-new1} we obtain \eqref{eq:int_by_parts_gradient}, and Lemma~\ref{lem:int_by_parts_gradient} is proven.

\section{A Modified Morrey-Kohn-H\"ormander Identity}
\label{sec:main_identity}
For a form $u = \sum_{\abs{J}=p,\abs{K}=q}' u_{J,K}dz^J\wedge d\bar{z}^K$, we denote by $\ol{\nabla}u$ the $(0,1)$-part of
the covariant derivative of $u$, i.e., the tensor
\[ \ol{\nabla} u = {\sideset{}{'}\sum_{\abs{J}=p,\abs{K}=q}}\,\sum_{j=1}^n\frac{\partial u_{J,K}}{\partial \ol{z_j}} dz^J\wedge d\ol{z}_K\tensor d\ol{z}_j.\]
In order to prove Theorem \ref{thm:pseudoconcave_identity}, we will first show that
with the same notation as Theorem \ref{thm:pseudoconcave_identity}, we have
\begin{lem}\label{lem:gradient_identity}
\begin{align}
\norm{\ol{\nabla} u}_{{\phi}}^2& = \int_{bD} \tau \abs{u}^2 e^{-\phi} d\sigma
 -\gipr{\gamma_\phi u,u}_{{\phi}}
  -4\norm{\eta u}_{{\phi}}^2\nonumber\\
&+\sum_{j=1}^n \norm{(\ol{L_j}^*-4\rho_j\tau) u}_{{\phi}}^2 + \frac{1}{4}  \norm{(\ol{N} +4\tau)u}_{{\phi}}^2\label{eq-twistedgradient}
\end{align}
\end{lem}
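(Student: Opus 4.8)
The plan is to obtain \eqref{eq-twistedgradient} by splitting the $(0,1)$-covariant derivative $\ol\nabla u$ into its tangential and normal components and then invoking Lemma~\ref{lem:int_by_parts_gradient} on each coefficient of $u$. Since $\ol{L_j}=\frac{\partial}{\partial\ol{z_j}}-\rho_{\bar j}\ol{N}$, we have $\frac{\partial}{\partial\ol{z_j}}=\ol{L_j}+\rho_{\bar j}\ol{N}$, and expanding
\[
  \norm{\ol\nabla u}_\phi^2=\sum_{\abs{J}=p,\abs{K}=q}'\;\sum_{j=1}^n\norm{\ol{L_j}u_{J,K}+\rho_{\bar j}\ol{N}u_{J,K}}_\phi^2
\]
produces, for each fixed increasing pair $(J,K)$, a cross term proportional to $\Re\gipr{\sum_{j}\rho_j\ol{L_j}u_{J,K},\,\ol{N}u_{J,K}}_\phi$ together with the term $\bigl(\sum_{j}\abs{\rho_{\bar j}}^2\bigr)\abs{\ol{N}u_{J,K}}^2$ coming from the normal part. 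The cross term vanishes because conjugating \eqref{eq-rhojLj} gives $\sum_{j}\rho_j\ol{L_j}=0$, while \eqref{eq-rhojsquare} shows $\sum_{j}\abs{\rho_{\bar j}}^2=\tfrac14$. Summing over $(J,K)$ I therefore expect $\norm{\ol\nabla u}_\phi^2=\sum_{j=1}^n\norm{\ol{L_j}u}_\phi^2+\tfrac14\norm{\ol{N}u}_\phi^2$, where $\ol{L_j}$ and $\ol{N}$ are understood to act coefficientwise on $u$.

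Next I would apply Lemma~\ref{lem:int_by_parts_gradient} with $v$ replaced by each coefficient $u_{J,K}$ --- these are $\mathcal{C}^1$ up to $bD$ and compactly supported away from $bU$, which is all that lemma requires --- and add the resulting identities over the increasing pairs $(J,K)$. Using $\abs{u}^2=\sum'\abs{u_{J,K}}^2$ and the fact that the weighted inner products add coefficientwise, this yields
\begin{multline*}
  \sum_{j=1}^n\norm{\ol{L_j}u}_\phi^2=\sum_{j=1}^n\norm{(\ol{L_j}^*-4\rho_j\tau)u}_\phi^2+4\norm{\tau u}_\phi^2\\
  +\int_{bD}\tau\abs{u}^2 e^{-\phi}d\sigma+2\Re\gipr{\tau u,\ol{N}u}_\phi-\gipr{\gamma_\phi u,u}_\phi-4\norm{\eta u}_\phi^2,
\end{multline*}
where I have split $\gipr{(\gamma_\phi+4\eta^2)u,u}_\phi=\gipr{\gamma_\phi u,u}_\phi+4\norm{\eta u}_\phi^2$.

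To finish, I would add $\tfrac14\norm{\ol{N}u}_\phi^2$ to both sides of the last display and substitute the formula for $\norm{\ol\nabla u}_\phi^2$ found in the first step; the three terms $4\norm{\tau u}_\phi^2+2\Re\gipr{\tau u,\ol{N}u}_\phi+\tfrac14\norm{\ol{N}u}_\phi^2$ then combine into $\tfrac14\norm{(\ol{N}+4\tau)u}_\phi^2$ by an elementary completion of the square, and collecting the remaining terms gives exactly \eqref{eq-twistedgradient}. I do not anticipate a genuine obstacle: all of the analytic content --- the integration by parts responsible for the $bD$ boundary term --- has already been packaged into Lemma~\ref{lem:int_by_parts_gradient}, so what is left is essentially algebra, and the only point that requires some care is keeping the tangential/normal decomposition consistent, which comes down to the two identities $\sum_{j}\rho_j\ol{L_j}=0$ and $\sum_{j}\abs{\rho_{\bar j}}^2=\tfrac14$.
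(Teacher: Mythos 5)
Your argument is correct and follows essentially the same route as the paper: the orthogonal decomposition $\norm{\ol\nabla u}_\phi^2=\sum_j\norm{\ol{L_j}u}_\phi^2+\tfrac14\norm{\ol N u}_\phi^2$ via \eqref{eq-rhojsquare} and \eqref{eq-rhojLj}, followed by Lemma~\ref{lem:int_by_parts_gradient} applied coefficientwise and the completion of the square $4\norm{\tau u}_\phi^2+2\Re\gipr{\tau u,\ol N u}_\phi+\tfrac14\norm{\ol N u}_\phi^2=\tfrac14\norm{(\ol N+4\tau)u}_\phi^2$. The only (immaterial) difference is that the paper establishes the scalar identity completely before summing over the coefficients $u_{J,K}$, whereas you sum at each stage.
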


\begin{proof}
We first consider the special case in which $v\in \mathcal{C}^1(U{\setminus}D)$ with compact support in  $U{\setminus}D$.

Rewrite \eqref{eq-Ljdef} as $ \frac{\partial}{\partial z_j}= L_j + \rho_j N$, so $\frac{\partial}{\partial\ol{z_j}}=
\ol{L_j} + \rho_{\ol{j}}\ol{N}$.  From \eqref{eq-rhojsquare} and \eqref{eq-rhojLj} we have the orthogonal decomposition
\[\sum_{j=1}^n \abs{ v_{\bar{j}}}^2
=  \sum_{j=1}^n  \abs{\ol{L_j} v}^2 + \frac{1}{4}\abs{\ol{N}{v}}^2.
\]
Therefore,
\[
\norm{\ol{\nabla}v}_{{\phi}}^2 = \sum_{j=1}^n\norm{\ol{L_j}v}^2_{{\phi}} + \frac{1}{4} \norm{\ol{N}v}_{{\phi}}^2.
\]
By Lemma \ref{lem:int_by_parts_gradient}, we have
\begin{align*}
  \norm{\ol{\nabla}v}_{{\phi}}^2&=\sum_{j=1}^n \norm{(\ol{L_j}^*-4\rho_j\tau) v}_{{\phi}}^2+  4\norm{\tau v}_{{\phi}}^2
  + \int_{b\Omega} \tau\abs{v}^2 e^{-\phi} d\sigma  + 2 \Re \gipr{\tau v, \ol{N} v}_{{\phi}} \\ &\qquad-\gipr{\left(\gamma_\phi + 4 \eta^2\right)v,v}_{{\phi}}+ \frac{1}{4} \norm{\ol{N}v}_{{\phi}}^2.
\end{align*}
Note that
\[ \norm{\ol{N}v}_{{\phi}}^2+ 2 \Re \gipr{4\tau v, \ol{N} v}_{{\phi}}= \norm{(\ol{N} +4\tau)v}_{{\phi}}^2 - 16 \norm{\tau v}_{{\phi}}^2. \]
Therefore we obtain
\begin{multline}
\label{eq-ibp}
  \norm{\ol{\nabla}v}_{{\phi}}^2=\sum_{j=1}^n \norm{(\ol{L_j}^*-4\rho_j\tau) v}_{{\phi}}^2
   +\int_{b\Omega} \tau\abs{v}^2 e^{-\phi} d\sigma \\ -\gipr{\left(\gamma_\phi + 4 \eta^2\right)v,v}_{{\phi}}+ \frac{1}{4} \norm{(\ol{N} +4\tau)v}_{{\phi}}^2.
\end{multline}
which, after rearrangement, is identical to \eqref{eq-twistedgradient} for functions.

Now let $u=\sum_{\abs{I}=p,\abs{J}=q}'u_{I,J} dz^I\wedge d \ol{z}^J$
 be as in the statement of the proposition. Fix  increasing multi-indices $I$ and
$J$ with $\abs{I}=p$ and $\abs{J}=q$. Replacing $v$ in \eqref{eq-ibp} by $u_{I,J}$ and, summing over all $I,J$ with $\abs{I}=p,\abs{J}=q$, we obtain \eqref{eq-twistedgradient}.
\end{proof}
\subsection{Proof of Theorem~\ref{thm:pseudoconcave_identity}}
We recall the Morrey-Kohn-H\"ormander Identity of $L^2$-theory: Let $\Omega$ be a domain with $\mathcal{C}^2$ boundary, for $1\leq q \leq n, 0\leq p\leq n$ let $u$ be a $(p,q)$-form on $\overline{\Omega}$ such that
\begin{equation}\label{eq-u}
u\in {\dom}(\dbar^*)\cap \mathcal{C}^1(\overline{\Omega}),
\end{equation}
and let $\phi\in \mathcal{C}^2(\ol{\Omega})$ with $\phi\in \rl$. Let $r$ be a normalized defining function for $\Omega$, i.e., $\abs{dr}=1$ along $b\Omega$.
Then we have
\begin{equation}
\norm{\dbar u}_{L^2(\Omega, e^{-\phi})}^2+ \norm{\dbar^*_\phi u}^2_{L^2(\Omega, e^{-\phi})}=  \int_{b\Omega}H r(u,u) e^{-\phi}d\sigma
 +\int_\Omega(H\phi)(u,u) e^{-\phi} dV
 + \norm{\ol{\nabla}u}^2_{L^2(\Omega, e^{-\phi})}.\label{eq-twistedbasicidentity}
\end{equation}
See  \cite[Proposition 4.3.1]{ChSh01} or \cite[page 18 ff.]{Str10} for a detailed account of the proof of this basic result.

Recall  that $U$ is a bounded neighborhood of  $\ol{D}$. Choose $R>0$ sufficiently large so that  $\ol{U}\subset B(0,R)$, and let
 $\Omega=B(0,R)\setminus \ol{D}$. Recall that the form $u$ has compact support in $U\setminus D$ and lies in $\mathcal{C}^1_{p,q}(\ol{U}\setminus D)\cap\dom \dbar^*_\phi$.
 Since $b\Omega$ is the disjoint union of $bD$ and $bB(0,R)$, it follows that $u|_{b B(0,R)}=0$, and  we may apply \eqref{eq-twistedbasicidentity} to this domain to obtain
\begin{multline*}
\norm{\dbar u}_{L^2(U{\setminus}\ol{D},e^{-\phi})}^2+ \norm{\dbar^*_\phi u}^2_{L^2(U{\setminus}\ol{D},e^{-\phi})} =  \\ \int_{b(U{\setminus}\ol{D})}H(-\rho)(u,u) e^{-\phi}d\sigma
 +\int_{U{\setminus}\ol{D}}(H\phi)(u,u) e^{-\phi} dV
 + \norm{\ol{\nabla}u}^2_{L^2(U{\setminus}\ol{D},e^{-\phi})}.
\end{multline*}
where  we have used the fact that $-\rho$ is a normalized defining function for $\Omega$ near $bD$.  Since \[\int_{b(U{\setminus}\ol {D})}H(-\rho)(u,u) e^{-\phi}d\sigma=-\int_{bD}H\rho(u,u) e^{-\phi}d\sigma,\] we may combine the above identity with \eqref{eq-twistedgradient} to obtain \eqref{eq:pseudoconcave_identity}.

\section{Proof of Theorem~\ref{thm:W^1_solvability}}
\label{sec:mixed_boundary_conditions}
\subsection{Boundary conditions in the \texorpdfstring{$\dbar$}{dbar}-problem} In the $L^2$-theory of the $\dbar$-operator, it is standard to use the {\em maximal weak realization} of the
 $\dbar$-operator as a  densely-defined closed  unbounded Hilbert space operator on the weighted spaces of  square-integrable
 forms $L^2_{p,q}(D, e^{-\phi})$.  The operator $\dbar:L^2_{p,q}(D, e^{-\phi})\to L^2_{p,q+1}(D, e^{-\phi})$ has dense domain
 \[ \dom(\dbar)= \left\{u\in L^2_{p,q}(D, e^{-\phi})| \dbar u \in  L^2_{p,q+1}(D, e^{-\phi})\right\}.\]
It will be useful for our purposes to consider other ways of realizing the $\dbar$-operator as
an unbounded operator on $L^2_{p,q}(D, e^{-\phi})$.
We will find it helpful to use the strong minimal realization of the $\dbar$-operator, defined as follows:
\begin{defn}
\label{defn:strong_minimal} Let $D$ be an open set  in $\cx^n$. Let $u$ be a $(p,q)$-form on $D$.  We say that $u\in\dom(\dbar_c)$ if there exists a sequence $\{u_j\}\subset L^2_{p,q}(D)\cap\dom\dbar$ and $v\in L^2_{p,q+1}(D)$ such that each  $u_j$ is compactly supported in $D$,  we have $u_j\rightarrow u$ in $L^2_{p,q}(D)$, and  $\dbar u_j\rightarrow v$ in $L^2_{p,q+1}(D)$.  When dealing with multiple domains, we will write $\dbar_c=\dbar_c^D$ to emphasize the domain under consideration.
\end{defn}
As noted in \cite{LiSh13}, this means that $u$ must satisfy the $\dbar$-Dirichlet condition on $b\Omega$, i.e., if $u\in \mathcal{C}^1_{p,q}(\overline\Omega)$, then $u\in\dom(\dbar_c)$ if and only if $\dbar\rho\wedge u|_{b\Omega}=0$ for any $\mathcal{C}^1$ defining function $\rho$ for $\Omega$.
We will need  the following fact, for whose proof see  \cite[Lemma 2.4]{LaSh13}:
\begin{lem}[Laurent-Thi\'ebaut, Shaw]
\label{lem:extension_str_min}
  Let $D$ be a bounded domain in $\cx^n$ and let $u$ be a $(p,q)$-form on $D$.  If $u\in\dom(\dbar_c^D)$, then the extension $\tilde u\in L^2_{p,q}(\mathbb{C}^n)$ defined by $\tilde u=u$ on $D$ and $\tilde u=0$ on $\mathbb{C}^n{\setminus} D$ is in $\dom(\dbar)$ on $\mathbb{C}^n$.  The converse is also true when $D$ has Lipschitz boundary.
\end{lem}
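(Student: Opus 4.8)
\emph{Overview.} The plan is to prove the two implications separately: the forward implication holds for any bounded domain, while the Lipschitz hypothesis enters only in the converse. \emph{Forward implication.} I would first isolate the elementary fact that $\dbar$ commutes with extension by zero for forms supported away from $bD$: if $w\in L^2_{p,q}(D)\cap\dom\dbar$ is compactly supported in $D$, then, since $\dbar$ is a local (differential) operator, $\supp\dbar w\subset\supp w$ lies at positive distance from $bD$, so on the open cover $\{D,\ \mathbb{C}^n\setminus\supp w\}$ of $\mathbb{C}^n$ the extension $\wt w$ coincides alternately with a form having $L^2$ weak $\dbar$ (namely $w$, whose weak $\dbar$ is $\dbar w$) and with $0$, these agreeing on the overlap; patching gives $\wt w\in\dom\dbar$ on $\mathbb{C}^n$ with $\dbar\wt w=\wt{\dbar w}$. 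Now given $u\in\dom(\dbar_c^D)$ with an approximating sequence $\{u_j\}$ and limit $\dbar u_j\to v$ as in Definition~\ref{defn:strong_minimal}, I apply this to each $u_j$; since zero-extension is an isometry from $L^2(D)$ into $L^2(\mathbb{C}^n)$, we obtain $\wt{u_j}\to\wt u$ and $\dbar\wt{u_j}=\wt{\dbar u_j}\to\wt v$ in $L^2(\mathbb{C}^n)$, and the closedness of the maximal $\dbar$ on $\mathbb{C}^n$ yields $\wt u\in\dom\dbar$, as claimed.

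\emph{Converse, assuming $bD$ Lipschitz.} Suppose $\wt u\in\dom\dbar$ on $\mathbb{C}^n$. I would localize using a finite open cover $\{U_i\}_{i=0}^N$ of $\ol D$ with $U_0\Subset D$ and, for $i\geq1$, $U_i$ a small ball centered at a boundary point in which, after an orthogonal change of the real coordinates $x=(x',x_{2n})\in\mathbb{R}^{2n-1}\times\mathbb{R}$, one has $D\cap U_i=\{x_{2n}>\gamma_i(x')\}\cap U_i$ for an $L$-Lipschitz function $\gamma_i$, with $bD\cap\ol{U_i}$ contained in the graph of $\gamma_i$. Choosing a subordinate partition of unity $\{\chi_i\}$ with $\sum_i\chi_i\equiv1$ on $\ol D$, the Leibniz rule gives $\chi_i\wt u\in\dom\dbar$ on $\mathbb{C}^n$; since $\dom(\dbar_c^D)$ is a linear space and $u=\sum_i\chi_i u$ on $D$ (writing $\chi_i u=(\chi_i\wt u)|_D$), it suffices to show each $\chi_i u\in\dom(\dbar_c^D)$. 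For $i=0$ this is immediate, as $\chi_0\wt u$ is already compactly supported in $D$ and lies in $L^2_{p,q}(D)\cap\dom\dbar$. For $i\geq1$ I translate inward: with $e$ the unit vector in the $x_{2n}$-direction, set $w_i^\delta(x)=(\chi_i\wt u)(x-\delta e)$ for small $\delta>0$. Because $\chi_i\wt u$ vanishes a.e. on $\{x_{2n}<\gamma_i(x')\}$, its support lies in $\{x_{2n}\geq\gamma_i(x')\}\cap\ol{U_i}$, hence $\supp w_i^\delta\subset\{x_{2n}\geq\gamma_i(x')+\delta\}\cap(\delta e+\ol{U_i})$; a short computation with the $L$-Lipschitz bound shows every point of this set has Euclidean distance at least $\delta/(2\max(L,1))$ from the graph of $\gamma_i$, so (shrinking $U_i$ so that $\ol{U_i}$ meets $bD$ only in that graph) $w_i^\delta$ is compactly supported in $D$ for all sufficiently small $\delta$. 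Since translation is continuous on $L^2(\mathbb{C}^n)$ and commutes with the constant-coefficient operator $\dbar$, we get $w_i^\delta\to\chi_i\wt u$ and $\dbar w_i^\delta=(\dbar(\chi_i\wt u))(\,\cdot\,-\delta e)\to\dbar(\chi_i\wt u)$ in $L^2(\mathbb{C}^n)$ as $\delta\to0$; restricting to $D$ exhibits $\chi_i u$ as a graph-norm limit of forms in $L^2_{p,q}(D)\cap\dom\dbar$ that are compactly supported in $D$, i.e.\ $\chi_i u\in\dom(\dbar_c^D)$. Summing over $i$ finishes the converse.

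\emph{Main obstacle.} The forward direction is routine once the zero-extension lemma for interior-supported forms is singled out. The real content is the converse, and the delicate point is the geometric bookkeeping in the boundary charts: choosing the cover, the partition of unity, and the translation direction so that each inward translate is compactly supported strictly inside $D$, with a quantitative lower bound for its distance to $bD$. This is precisely where the Lipschitz (uniform interior cone) regularity is essential — for a merely topological boundary an inward translate need not be supported in $D$ at all, and one does not expect the statement to survive. If smooth compactly supported approximants are desired, one may further mollify each $w_i^\delta$ with parameter much smaller than $\dist(\supp w_i^\delta,bD)$, but the definition of $\dbar_c$ as stated does not require this.
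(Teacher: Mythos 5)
Your proof is correct. The paper does not prove this lemma itself --- it defers to \cite[Lemma 2.4]{LaSh13} --- and your argument is precisely the standard one underlying that reference: the forward direction via zero-extension of the compactly supported approximants and the closedness of the maximal $\dbar$ on $\mathbb{C}^n$, and the converse via a boundary-chart partition of unity and inward translation along the graph direction, where the Lipschitz cone condition is exactly what guarantees the translates are compactly supported in $D$ with a quantitative distance bound. The details (locality of the weak $\dbar$ for the patching step, continuity of translation in $L^2$ and its commutation with the constant-coefficient operator $\dbar$, and the $\delta/\sqrt{1+L^2}$-type distance estimate) are all handled correctly, so nothing is missing.
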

We will also need to make use of the $\dbar$-operator with mixed boundary conditions on annuli (see Li and Shaw   \cite{LiSh13}):
\begin{defn}
\label{defn:mix} Let $\Omega$ be a bounded annulus with envelope $\Omega_1$ and hole $\Omega_2$. Let $u$ be a $(p,q)$-form on $\Omega$.  We say that $u\in\dom(\dbar_{\mathrm{mix}})$ if there exists a sequence $\{u_j\}\subset L^2_{p,q}(\Omega)\cap\dom\dbar$ and $v\in L^2_{p,q+1}(\Omega)$ such that $u_j\rightarrow u$, $\dbar u_j\rightarrow v$, and $u_j$ vanishes identically in a neighborhood of $b\Omega_2$.
\end{defn}
As  for $\dom(\dbar_c)$, this means that $u$ must satisfy the $\dbar$-Dirichlet condition on $b\Omega_2$, i.e., if $u\in \mathcal{C}^1_{p,q}(\overline\Omega)$, then $u\in\dom(\dbar_{\mathrm{mix}})$ if and only if $\dbar\rho\wedge u|_{b\Omega_2}=0$ for any $\mathcal{C}^1$ defining function $\rho$ for $\Omega$. We will frequently make use of the following, which follows using the same argument (cf. \cite[ Lemma 2.4]{LaSh13}) as for Lemma~\ref{lem:extension_str_min}
:
\begin{lem}
\label{lem:extension}
  Let $\Omega$ be a bounded annulus with envelope $\Omega_1$ and hole $\Omega_2$, and let $u$ be a $(p,q)$-form on $\Omega$.  If $u\in\dom(\dbar_{\mathrm{mix}})$, then the extension $\tilde u\in L^2_{p,q}(\Omega)$ defined by $\tilde u=u$ on $\Omega$ and $\tilde u=0$ on $\ol\Omega_2$ is also in $\dom(\dbar)$ for $\Omega_1$.  The converse is also true when $\Omega_2$ has Lipschitz boundary.
\end{lem}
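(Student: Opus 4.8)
The plan is to mimic the proof of Lemma~\ref{lem:extension_str_min}, i.e.\ of \cite[Lemma~2.4]{LaSh13}, localizing everything to the inner boundary: everywhere in that argument one replaces ``$bD$'' by ``$b\Omega_2$'' and ``extension by zero on $\cx^n\setminus D$'' by ``extension by zero on $\ol{\Omega_2}$''. For the forward implication, assume $u\in\dom(\dbar_{\mathrm{mix}})$ and pick a sequence $\{u_j\}$ as in Definition~\ref{defn:mix}: $u_j\in L^2_{p,q}(\Omega)\cap\dom\dbar$, $u_j\to u$ and $\dbar u_j\to v$ in $L^2(\Omega)$, and $u_j\equiv 0$ on a neighborhood of $b\Omega_2$. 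The key observation is that, because $u_j$ vanishes on an open neighborhood of $b\Omega_2$ in $\Omega$ and the extension $\tilde u_j$ is defined to be $0$ on $\ol{\Omega_2}$, the form $\tilde u_j$ vanishes on an open neighborhood of $\ol{\Omega_2}$ \emph{inside} $\Omega_1$; hence testing against any form compactly supported in $\Omega_1$ only involves the open set $\Omega$, on which $\tilde u_j=u_j\in\dom\dbar$. Therefore $\tilde u_j\in\dom\dbar$ on $\Omega_1$, with $\dbar\tilde u_j$ the zero-extension of $\dbar u_j$; letting $j\to\infty$ and using that the maximal $\dbar$ on $\Omega_1$ is a closed operator, we get $\tilde u\in\dom\dbar$ on $\Omega_1$. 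No regularity of $b\Omega_2$ is needed here.

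For the converse---where the Lipschitz hypothesis on $b\Omega_2$ enters---assume $\tilde u\in\dom\dbar$ on $\Omega_1$; I must approximate $u=\tilde u|_\Omega$ in the sense of Definition~\ref{defn:mix}. The plan: cover the compact set $b\Omega_2$ by finitely many balls $B_1,\dots,B_m$, each $\Subset\Omega_1$, in which---after a real-linear change of coordinates---$\Omega_2$ is the open region lying below a Lipschitz graph (so that $\Omega\cap B_i$ lies above that graph); adjoin $B_0\Subset\Omega$ so that $\{B_i\}_{i=0}^m$ covers $\ol\Omega$, and take a subordinate partition of unity $\{\chi_i\}$ with $\supp\chi_0\cap b\Omega_2=\emptyset$. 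Decompose $\tilde u=\sum_{i=0}^m\chi_i\tilde u$; by the Leibniz rule each $\chi_i\tilde u\in\dom\dbar$ on $\Omega_1$, and for $i\ge 1$ it has compact support in $B_i\Subset\Omega_1$, so its zero-extension is in $\dom\dbar(\cx^n)$. For $i\ge 1$ and small $\delta>0$, let $w^{(i)}_\delta$ be $\chi_i\tilde u$ translated by $\delta$ in the coordinate direction pointing from $\Omega_2$ into $\Omega$ in the $i$-th chart. Since $\chi_i\tilde u$ vanishes on $\ol{\Omega_2}$, the translate $w^{(i)}_\delta$ vanishes on a $\delta$-thick neighborhood of $b\Omega_2$ inside $B_i$, and since $\supp w^{(i)}_\delta\subset B_i$ for $\delta$ small, it vanishes on a full neighborhood of $b\Omega_2$. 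Then
\[
  u_\delta=\chi_0\tilde u\big|_\Omega+\sum_{i=1}^m w^{(i)}_\delta\big|_\Omega
\]
lies in $L^2_{p,q}(\Omega)\cap\dom\dbar$ and vanishes near $b\Omega_2$ (near $b\Omega_1$ it is merely the cutoff $\chi_0\tilde u$, so $b\Omega_1$ need not be regular); and as $\delta\to 0$, continuity of translation on $L^2$ together with the commutation of constant-vector translation with $\dbar$ gives $u_\delta\to u$ and $\dbar u_\delta\to(\dbar\tilde u)|_\Omega$ in $L^2(\Omega)$. Hence $u\in\dom(\dbar_{\mathrm{mix}})$. (To get smooth approximants one additionally mollifies each $w^{(i)}_\delta$ at a scale $\ll\delta$, which preserves everything.)

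I expect the only genuinely delicate step to be the converse, and within it the bookkeeping at $b\Omega_2$: one must check that the local Lipschitz-graph description really does make each translated piece $w^{(i)}_\delta$ vanish on an honest open neighborhood of $b\Omega_2$, and that these neighborhoods---together with the support of $\chi_0\tilde u$---cover all of $b\Omega_2$, so that $u_\delta$ vanishes near the \emph{entire} inner boundary. This is exactly where $\supp\chi_i\Subset B_i$ and the choice of graph direction are used. Everything else (the Leibniz rule, closedness of $\dbar$, continuity of translation in $L^2$, commutation of translation with $\dbar$) is routine, which is why the excerpt can assert that the proof is ``the same argument'' as for Lemma~\ref{lem:extension_str_min}.
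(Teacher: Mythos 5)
Your proof is correct and follows essentially the same route the paper intends: the paper simply asserts that Lemma~\ref{lem:extension} ``follows using the same argument as for Lemma~\ref{lem:extension_str_min}'' (i.e.\ the Friedrichs-type translation argument of \cite[Lemma~2.4]{LaSh13}), localized to the inner boundary exactly as you do. Both directions check out, including the only delicate point you flag — that the translated pieces $w^{(i)}_\delta$, together with $\chi_0\tilde u$, vanish on a genuine neighborhood of all of $b\Omega_2$.
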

For a general discussion of the Hilbert space realization of differential operators, see
\cite{grubb}, and in the context of the $\dbar$-operator, see \cite{ChSh12}.

\subsection{Solvability with mixed boundary conditions: statement and preliminaries}
The work of this section, leading to a proof of  Theorem~\ref{thm:W^1_solvability}, follows closely that of \cite{LiSh13}. Like in \cite{LiSh13}, we exploit the close relation between
the $W^1$-estimates on the hole for the $\dbar$-problem, and estimates on the $\dbar$-problem
on the annulus with mixed boundary conditions. In particular, along with our proof of
Theorem~\ref{thm:W^1_solvability}, we will obtain the following result:

\begin{prop}
\label{prop:dbar_mix_solvability}
For $n\geq 2$ and $2\leq q\leq n$, let $\Omega$ be an annulus in $\cx^n$, such that the envelope is weakly $q$-convex and the hole is  weakly $(q-1)$-convex with $\mathcal{C}^{1,1}$ boundary.
Then  there exists a constant $C_q(\Omega,\mathrm{mix})>0$ with the property that for all $0\leq p\leq n$ and for every $f\in {\dom}(\dbar_{\rm mix}) \cap L^2_{p,q}(\Omega)$ satisfying $\dbar f =0$ there is a $u\in  {\dom}(\dbar_{\rm mix}) \cap L^2_{p,q-1}(\Omega)$ such that $\dbar_{\rm mix} u =f$ and
\[
  \norm{u}_{L^2(\Omega)}\leq C_q(\Omega,\mathrm{mix})\norm{f}_{L^2(\Omega)}.
\]
\end{prop}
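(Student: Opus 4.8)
The plan is to reduce the solvability-with-mixed-boundary-conditions statement on the annulus $\Omega$ to the basic $L^2$ existence theorem via the Morrey--Kohn--H\"ormander machinery, where the boundary integral over $b\Omega_2$ is controlled by Theorem~\ref{thm:pseudoconcave_identity} and the interior term over $\Omega_1\setminus\ol{\Omega_2}$ near $b\Omega_1$ is controlled by the classical weakly $q$-convex estimate of Ho. First I would reduce to the smooth case: by the exhaustion/mollification argument already used in Section~\ref{sec:reduction_to_C3}, approximate the $\mathcal{C}^{1,1}$ hole $\Omega_2$ from outside by smoothly bounded domains $\Omega_2^{(\nu)}\Subset\Omega_2$ whose signed distance functions converge in $\mathcal{C}^{1,1}_{\loc}$; since the curvature quantity $\eta^2$ and the trace $\tau$ of the Levi form depend only on second derivatives of the signed distance function, the constants $\sup_{U\setminus D}\eta^2$ and the relevant bounds on $\tau$ stay uniformly bounded, so an a~priori estimate on the annuli $\Omega^{(\nu)}=\Omega_1\setminus\ol{\Omega_2^{(\nu)}}$ with uniform constant passes to the limit. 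So assume $b\Omega_2$ is smooth, with $\rho$ the signed distance function for $\Omega_2$, $\mathcal{C}^2$ on a collar neighborhood.

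The core step is the a~priori estimate. For $u\in\mathcal{C}^1_{p,q}(\ol\Omega)$ lying in $\dom(\dbar^*)$ for the maximal $\dbar$ and satisfying the $\dbar$-Dirichlet condition $\dbar\rho\wedge u=0$ on $b\Omega_2$ (the density of such forms in $\dom(\dbar_{\mathrm{mix}})\cap\dom(\dbar^*_{\mathrm{mix}})$ being the usual Friedrichs-type argument), write the MKH identity on $\Omega$. The boundary of $\Omega$ splits as $b\Omega_1\sqcup b\Omega_2$. On $b\Omega_1$, the mixed boundary condition imposes the $\dbar$-Neumann condition, so the boundary integrand is $Hr(u,u)$ with $r$ a defining function for $\Omega_1$; since $\Omega_1$ is weakly $q$-convex, after incorporating a suitable weight $\phi$ of the form $\phi=t|z|^2 - \log(-\rho_1)$ near $b\Omega_1$ (exactly as in Ho's argument adapting H\"ormander), the sum of the $b\Omega_1$-boundary term and the corresponding Hessian term $H\phi(u,u)$ dominates $c\norm{u}^2$ on a collar of $b\Omega_1$. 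On $b\Omega_2$, the $\dbar$-Dirichlet condition means the boundary integrand there is $-(H\rho-\tau I)(u,u) + (\text{terms from }\ol\nabla u)$; this is precisely where Theorem~\ref{thm:pseudoconcave_identity} enters — applied on a collar of $b\Omega_2$ it replaces the genuine boundary contribution at $b\Omega_2$ by $-\int_{b\Omega_2}(H\rho-\tau I)(u,u)$ plus the manifestly nonnegative square terms $\tfrac14\norm{(\ol N+4\tau)u}^2+\sum_j\norm{(\ol{L_j}^*-4\rho_j\tau)u}^2$, at the cost of $-4\norm{\eta u}^2$ and an interior Hessian term $H\phi-\gamma_\phi I$. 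Now on $b\Omega_2$ the hole is weakly $(q-1)$-convex, which means (Definition~\ref{defn:weak_q_convex}, with the sign reversed because $\Omega_2$ sits on the concave side) that $H\rho-\tau I$, acting on $(p,q)$-forms, is controllable: one chooses the weight near $b\Omega_2$ built from $-\log(-(-\rho))=-\log\rho$ — the Oka-type function for the pseudoconcave side — so that $-(H\rho-\tau I)(u,u)$ together with the weight Hessian is $\geq -C|u|^2$, i.e.\ the bad term is absorbed. The leftover $-4\norm{\eta u}^2$ and the $\phi$-dependent errors are dominated by $\norm{u}^2$ with a constant depending only on $\sup\eta^2$, on $\norm{\tau}_{L^\infty}$, on the size of the collar (reach of $\Omega_2$), and on $\diam\Omega_1$ — all uniform under the smoothing — by taking $t$ large. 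Combining the three pieces, partitioned via a cutoff that is $1$ near $b\Omega$ and interpolating in the interior where there is no boundary term at all, yields
\[
  \norm{u}_{L^2(\Omega)}^2 \leq C\bigl(\norm{\dbar u}_{L^2(\Omega)}^2 + \norm{\dbar^*_{\mathrm{mix}} u}_{L^2(\Omega)}^2\bigr)
\]
for all $q\geq 2$, where $C = C_q(\Omega,\mathrm{mix})$ depends only on the stated geometric data. Restricting to the orthogonal complement of $\ker\dbar^*_{\mathrm{mix}}$ and using that on $(p,q-1)$-forms with $q-1\geq 1$ one has a corresponding estimate (or simply that the $\dbar^*$-harmonic space is trivial here), the standard Hilbert-space duality argument (as in \cite[Ch.~4]{ChSh01}) produces, for $\dbar$-closed $f\in\dom(\dbar_{\mathrm{mix}})\cap L^2_{p,q}(\Omega)$, a solution $u\in\dom(\dbar_{\mathrm{mix}})\cap L^2_{p,q-1}(\Omega)$ of $\dbar_{\mathrm{mix}}u=f$ with $\norm{u}\leq C^{1/2}\norm{f}$.

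I expect the main obstacle to be the bookkeeping at $b\Omega_2$: verifying that the weight adapted to the pseudoconcave side genuinely absorbs $-(H\rho-\tau I)(u,u)$ under the hypothesis of weak $(q-1)$-convexity (which is a condition on $q-1$ eigenvalues of the Levi form of $\Omega_2$, equivalently on the $\dbar$-side geometry of $\Omega$ in degree $q$), and that this absorption is compatible — with uniform constants — with the curvature error term $\eta^2$ coming from using the signed distance function rather than an arbitrary defining function. The interior cutoff step, where one pays a term $\norm{\nabla\chi}_\infty\norm{u}\norm{\dbar u\oplus\dbar^*u}$ that must be re-absorbed, and the Friedrichs density argument ensuring $\mathcal{C}^1$ forms satisfying the two boundary conditions are dense in the graph norm of $\dbar_{\mathrm{mix}}\oplus\dbar^*_{\mathrm{mix}}$, are routine but need the Lipschitz regularity of $b\Omega_2$ (Lemma~\ref{lem:extension}) and smoothness of $b\Omega_1$; these I would handle exactly as in \cite{LiSh13} and \cite{LaSh13}.
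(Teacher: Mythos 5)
Your overall architecture (cutoff splitting $u$ into a piece near $b\Omega_1$ handled by weak $q$-convexity of the envelope and a piece near $b\Omega_2$ handled by Theorem~\ref{thm:pseudoconcave_identity}, followed by the standard Hahn--Banach/Riesz argument) matches the paper's, but there is a genuine gap at the heart of the inner-boundary step. A form $u\in\dom(\dbar_{\mathrm{mix}})\cap\dom(\dbar^*_{\mathrm{mix},\psi})$ satisfies the $\dbar$-\emph{Dirichlet} condition $\dbar\rho\wedge u=0$ on $b\Omega_2$, whereas Theorem~\ref{thm:pseudoconcave_identity} applies only to forms in $\dom\dbar^*_\phi$, i.e.\ satisfying the $\dbar$-\emph{Neumann} condition on the inner boundary; you cannot apply it to $u$ (or to $\chi u$) directly. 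Likewise, Lemma~\ref{lem:Levi-form_estimates} gives $(H\rho-\tau I)(v,v)\le 0$ under weak $(q-1)$-convexity only for $(n-p,n-q)$-forms $v\in\dom\dbar^*$, not for $(p,q)$-forms, so the degree bookkeeping in your absorption step does not close. The missing tool is the twisted Hodge star $\star_\psi$ of \eqref{eq-hodge}: one applies Theorem~\ref{thm:pseudoconcave_identity} (with weight $-\psi$) to $\star_\psi(\chi u)$, which is an $(n-p,n-q)$-form satisfying the $\dbar$-Neumann condition at $b\Omega_2$, and then transfers the estimate back using the isometries $\norm{\dbar(\star_\psi u_2)}_{-\psi}=\norm{\dbar^*_{\mathrm{mix},\psi}u_2}_{\psi}$ and $\norm{\dbar^*_{-\psi}(\star_\psi u_2)}_{-\psi}=\norm{\dbar_{\mathrm{mix}}u_2}_{\psi}$. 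Your substitute --- absorbing $-(H\rho-\tau I)(u,u)$ with a weight built from $-\log\rho$ on the pseudoconcave side --- would in fact fail: near a pseudoconvex hole one has $H\rho\ge 0$ on complex tangential directions and $\rho>0$ on $\Omega$, so the tangential Hessian of $-\log\rho$ is $-H\rho/\rho\le 0$ there, i.e.\ this weight has the wrong sign and cannot absorb anything. With the Hodge star in place, no singular weight is needed; $\psi=t|z|^2$ with $t$ large suffices, at the cost of the uniformly bounded curvature term $B=\sup 4\eta^2$.

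Your reduction to the smooth case is also not the paper's route and is internally muddled: approximating the hole by smooth domains $\Omega_2^{(\nu)}\Subset\Omega_2$ produces annuli $\Omega^{(\nu)}\supset\Omega$, so this is an \emph{outer} approximation of the annulus, and an a~priori estimate for forms on $\Omega$ cannot simply ``pass to the limit'' since such forms do not live on $\Omega^{(\nu)}$. The paper instead observes that one has an exhaustion of the \emph{hole} but not of the annulus: it first proves the $\mathcal{C}^2$ case of the proposition, uses it to establish Theorem~\ref{thm:W^1_solvability} ($W^1$-solvability on the hole, extended to $\mathcal{C}^{1,1}$ by exhausting the hole from inside with uniformly controlled signed distance functions), and then obtains the $\mathcal{C}^{1,1}$ case of the proposition by an entirely different argument: solve $\dbar v=\tilde f$ on the envelope via Lemma~\ref{lem-qconvex}, use interior ellipticity to get $v\in W^1$ on $\ol{\Omega_2}$, correct $v$ by $\dbar\tilde w$ with $\dbar w=v$ on $\Omega_2$ from Theorem~\ref{thm:W^1_solvability}, and set $u=v-\dbar\tilde w$, which vanishes on $\Omega_2$ and hence lies in $\dom(\dbar_{\mathrm{mix}})$. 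If you wish to pursue your limiting argument instead, you would at minimum need to solve on each larger annulus $\Omega^{(\nu)}$ with uniform constants and then justify that the weak limit of the solutions lies in $\dom(\dbar_{\mathrm{mix}})$ for $\Omega$ via the extension-by-zero characterization of Lemma~\ref{lem:extension}; none of this is addressed in your sketch.
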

In \eqref{eq:mixed_dbar_constant}, we will compute an explicit upper bound for $C_q(\Omega,\mathrm{mix})$ in terms of the geometry of $\Omega$.

Proposition \ref{prop:dbar_mix_solvability} is a generalization  of  \cite[Theorem~2.2]{LiSh13}. We have relaxed the convexity requirement from pseudoconvex to weakly $q$-convex
{ on the envelope and from pseudoconvex to weakly $(q-1)$-convex on the hole}, and  we have reduced the regularity of the hole from $\mathcal{C}^2$ to $\mathcal{C}^{1,1}$.

In order to apply Theorem \ref{thm:pseudoconcave_identity} to weakly $q$-convex domains, we will need some simple estimates for the Levi-form.
\begin{lem}
\label{lem:Levi-form_estimates}
  Let $\Omega\subset\mathbb{C}^n$ be a domain with $\mathcal{C}^2$ boundary, and let $\rho$ be a normalized defining function for $\Omega$.  For $0\leq p\leq n$ and $1\leq q\leq n$, let $u\in\mathcal{C}^1_{(p,q)}(\ol\Omega)\cap\dom\dbar^*$.  Then we have
  \begin{equation}
  \label{eq:weak_q_convexity}
    H\rho(u,u)|_{b\Omega}\geq 0\text{ whenever }\Omega\text{ is weakly }q\text{-convex}.
  \end{equation}
  If $n\geq 3$ and $1\leq q\leq n-2$, then
  \begin{equation}
  \label{eq:weak_n-1-q_convexity}
    (H\rho-\tau I)(u,u)|_{b\Omega}\leq 0\text{ whenever }\Omega\text{ is weakly }(n-1-q)\text{-convex}.
  \end{equation}
\end{lem}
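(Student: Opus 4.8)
The plan is to deduce both inequalities from the pointwise characterization of weak $q$-convexity in terms of the complex Hessian of $\rho$ together with the standard fact that a form $u\in\dom\dbar^*$ is annihilated, along the boundary, by contraction with $\dbar\rho$. Recall that for $u\in\mathcal{C}^1_{(p,q)}(\ol\Omega)\cap\dom\dbar^*$ one has the pointwise boundary condition $\sum_{j=1}^n \rho_{\ol j}u_{J,jK}=0$ on $b\Omega$ for each fixed increasing $J$ with $\abs J=p$ and each $K$ with $\abs K=q-1$. Fix such $J,K$ and set $a_j=u_{J,jK}$, a vector in $\mathbb{C}^n$ lying (at each boundary point) in the complex tangent space $\{a:\sum_j\rho_{\ol j}a_j=0\}$, which has complex dimension $n-1$. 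Then $H\rho(u,u)|_{b\Omega}$ is, coefficientwise, the sum over $J,K$ of the Hermitian form $\sum_{j,k}\rho_{j\ol k}a_j\ol{a_k}$ evaluated on these tangential vectors, and the first assertion \eqref{eq:weak_q_convexity} follows because the Hessian restricted to the $(n-1)$-dimensional complex tangent space is exactly the Levi form, whose sum of any $q$ eigenvalues is $\geq 0$ under weak $q$-convexity; since $a=(a_1,\dots,a_n)$ ranges over tangential vectors, $\sum_{j,k}\rho_{j\ol k}a_j\ol{a_k}$ is bounded below by (the sum of the $q$ smallest Levi eigenvalues) times $\abs a^2\geq 0$ when $q\geq$ the number of nonzero entries forced by the multi-index structure—more precisely, by the argument in \cite[Lemma 2.2]{Ho91} / \cite[Prop.~4.3.1]{ChSh01}, $H\rho(u,u)\geq \lambda_1+\dots+\lambda_q$ summed appropriately, hence $\geq 0$.

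For the second inequality, the key observation is that $\tau|_{b\Omega}$ is the full trace of the Levi form, i.e., the sum of all $n-1$ Levi eigenvalues $\lambda_1\leq\dots\leq\lambda_{n-1}$, as indicated by the reference to \eqref{eq:trace_levi_form}; equivalently $\tau|_{b\Omega}=\sum_{j=1}^n\ol{L_j}\rho_j$ restricted to the tangent space equals the trace of $H\rho$ on the complex tangent space. Therefore $(\tau I - H\rho)(u,u)|_{b\Omega}$, evaluated coefficientwise on the tangential vectors $a$, is the Hermitian form on the $(n-1)$-dimensional complex tangent space with eigenvalues $\sum_{i\neq 1}\lambda_i,\ \sum_{i\neq 2}\lambda_i,\dots$, i.e. the "complementary trace" form $\mathrm{tr}-H\rho$. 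Its smallest eigenvalue on the tangent space is $\lambda_1+\dots+\lambda_{n-1}-\lambda_{n-1}=\lambda_1+\dots+\lambda_{n-2}$ is not quite it—rather the eigenvalue corresponding to omitting the largest $\lambda_{n-1}$; in general the $k$-th eigenvalue of $\mathrm{tr}-H\rho$ is $\sum_{i}\lambda_i - \lambda_{n-k}$. So $(\tau I-H\rho)$ acting on a $(p,q)$-form, which via \eqref{eq-Hessian} picks out sums of $q$ of these complementary eigenvalues, is $\geq 0$ exactly when every sum of $q$ of the numbers $\{\sum_i\lambda_i-\lambda_m\}_{m=1}^{n-1}$ is nonnegative, i.e. $(n-q-1)\cdot(\text{sum of all }\lambda_i) + (\text{sum of the smallest }q\text{ values of }\lambda) \geq 0$; this is precisely the hypothesis that $\Omega$ is weakly $(n-1-q)$-convex (sum of any $n-1-q$ Levi eigenvalues is $\geq 0$), by a linear-algebra identity relating "sum of any $q$ eigenvalues of $\mathrm{tr}(L)I-L$" to "sum of any $n-1-q$ eigenvalues of $L$." Hence $(H\rho-\tau I)(u,u)|_{b\Omega}\leq 0$.

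The main technical step, and the one I would write out carefully, is this purely linear-algebraic lemma: if $L$ is a Hermitian $(n-1)\times(n-1)$ matrix with eigenvalues $\lambda_1,\dots,\lambda_{n-1}$, then the sum of any $q$ eigenvalues of $(\operatorname{tr}L)I - L$ equals $\operatorname{tr}L$ minus the sum of some $q$ eigenvalues of $L$, equivalently equals the sum of some $n-1-q$ eigenvalues of $L$ plus $(n-1-q)$ copies are... — in short, the eigenvalues of $(\operatorname{tr}L)I-L$ are $\{\operatorname{tr}L-\lambda_i\}_i=\{\sum_{k\neq i}\lambda_k\}_i$, so a sum of $q$ of them is $\sum_{i\in S}\sum_{k\neq i}\lambda_k$ for $\abs S=q$, which one checks equals $\sum_{k\notin S}(\text{something})$; the cleanest route is: $\sum_{i\in S}(\operatorname{tr}L-\lambda_i)\geq 0$ for all $\abs S=q$ $\iff$ $\sum_{i\in S'}\lambda_i\geq 0$ for all $\abs{S'}=n-1-q$, which is a direct consequence of the min-max description of partial eigenvalue sums (the smallest sum of $q$ of the $\operatorname{tr}L-\lambda_i$ corresponds to the largest sum of $q$ of the $\lambda_i$, and $\operatorname{tr}L$ minus the largest sum of $q$ eigenvalues is the smallest sum of $n-1-q$ eigenvalues). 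I would then apply this with $L$ the Levi form at each boundary point. The only subtlety to handle with care is the bookkeeping between the form-level action \eqref{eq-Hessian} on $(p,q)$-forms and the eigenvalue sums—this is exactly the standard reduction used in the proof of the Morrey–Kohn–H\"ormander estimate (the tangential vectors $u_{J,jK}$ lie in the complex tangent space because $u\in\dom\dbar^*$), so it follows the treatment in \cite[Proposition 4.3.1]{ChSh01} verbatim, and the rest is the linear algebra above; I expect no real obstacle beyond writing the eigenvalue identity cleanly.
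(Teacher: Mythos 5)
Your proof of \eqref{eq:weak_q_convexity} follows essentially the paper's route: diagonalize the Levi form at a boundary point, use that $u\in\dom\dbar^*$ forces the coefficients carrying the normal index to vanish there, and reduce to sums of $q$ Levi eigenvalues. That half is fine.

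The second half contains a genuine error. In the paper's notation, the $I$ in $(H\rho-\tau I)(u,u)$ is the identity on forms, so $(\tau I)(u,u)=\tau\abs{u}^2$: one copy of the trace per coefficient $\abs{u_{J,K}}^2$. You instead treat $\tau I-H\rho$ as the matrix $(\operatorname{tr}L)\,\mathrm{Id}-L$ fed into \eqref{eq-Hessian}; that operator acts on a $(p,q)$-form through the sums $\sum_{j\in K}(\operatorname{tr}L-\lambda_j)=q\operatorname{tr}L-\sum_{j\in K}\lambda_j$, i.e.\ it contributes $q$ copies of the trace, which is not the quantity in the lemma. (That the paper means the form identity is confirmed by its use of the lemma later, where $(H\psi-\gamma_\psi I)$ applied to an $(n-p,n-q)$-form produces the factor $t((n-q)-(n-1))$, not $t(n-q)(1-(n-1))$.) Your linear-algebra lemma is therefore aimed at the wrong target, and in addition the asserted equivalence ``$\sum_{i\in S}(\operatorname{tr}L-\lambda_i)\geq 0$ for all $\abs{S}=q$ iff $\sum_{i\in S'}\lambda_i\geq 0$ for all $\abs{S'}=n-1-q$'' is false for $q\geq 2$: with $n-1=3$, $q=2$, and Levi eigenvalues $(-1,1,1)$, the numbers $\operatorname{tr}L-\lambda_i$ are $2,0,0$, so every sum of two of them is nonnegative, while the single eigenvalue $-1$ violates the right-hand condition.

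The correct computation is more direct and requires no min--max argument at all. In coordinates diagonalizing the Levi form at $w\in b\Omega$, with $u$ tangential there,
\[
(H\rho-\tau I)(u,u)\big|_w={\sideset{}{'}\sum_{\substack{\abs{J}=p,\abs{K}=q\\ n\notin K}}}\Bigl(\sum_{j\in K}\lambda_j-\sum_{j=1}^{n-1}\lambda_j\Bigr)\abs{u_{J,K}}^2
=-{\sideset{}{'}\sum_{\substack{\abs{J}=p,\abs{K}=q\\ n\notin K}}}\ \sum_{\substack{1\leq j\leq n-1\\ j\notin K}}\lambda_j\,\abs{u_{J,K}}^2,
\]
and each inner sum runs over exactly $n-1-q$ of the Levi eigenvalues, so weak $(n-1-q)$-convexity makes it nonnegative and the whole expression nonpositive. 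Note also that repairing your version would not rescue the argument as written: the quantity $q\operatorname{tr}L-\sum_{j\in K}\lambda_j$ you bound from below dominates the correct quantity $\operatorname{tr}L-\sum_{j\in K}\lambda_j$ (once one knows $\operatorname{tr}L\geq 0$), so showing the former is nonnegative is strictly weaker than what the lemma asserts.
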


\begin{proof}

Fix $w\in b\Omega$.  After a unitary change of coordinates,  we may assume that $\rho_j(w)=0$ for all $1\leq j\leq n-1$ and the hermitian matrix
$\left(\rho_{j\ol k}(w)\right)_{1\leq j,k\leq n-1}$ (i.e., the Levi-form) is diagonal.  As usual, we write $u=\sum_{\abs{J}=p,\abs{K}=q}'u_{J,K}dz^J\wedge d\ol{z}^K$.  Since $u\in\dom\dbar^*$, we have $u_J|_w=0$ whenever $n\in J$.  Hence, we have
\[
  H\rho(u,u)|_w={\sideset{}{'}\sum_{\abs{J}=p,|I|=q-1}}\,\sum_{j=1}^{n-1}\rho_{j\ol j}(w)|u_{J,jI}(w)|^2
\]
Given an increasing multi-index $K$ of length $q$ and an integer $j\in K$, there is a unique increasing multi-index of length $I$ such that $jI$ is a rearrangement of $K$.  Hence, we have the alternative expression
\begin{equation}
\label{eq:q_Levi_form}
  H\rho(u,u)|_w={\sideset{}{'}\sum_{\substack{\abs{J}=p,|K|=q\\n\notin K}}}\,\sum_{j\in K}\rho_{j\ol j}(w)|u_{J,K}(w)|^2
\end{equation}
If $\Omega$ is weakly $q$-convex, then $\sum_{j\in {K}}\rho_{j\ol j}>0$ for every increasing multi-index ${K}$ of length $q$ with $n\notin {K}$, so \eqref{eq:weak_q_convexity} follows.

Continuing to use our special coordinates at $w$, we use \eqref{eq-tracelevi} and the fact that $L_j|_w=\frac{\partial}{\partial z_j}$ for $1\leq j\leq n-1$ and $L_n|_w=0$ to compute
\begin{equation}
\label{eq:trace_levi_form}
  \tau|_w=\sum_{j=1}^{n-1}\rho_{j\ol j}(w).
\end{equation}
Hence, using \eqref{eq:q_Levi_form}, we have
\[
  (H\rho-\tau I)(u,u)|_w=-{\sideset{}{'}\sum_{\substack{\abs{J}=p,|K|=q\\n\notin K}}}\,\sum_{\substack{1\leq j\leq n-1\\j\notin K}}\rho_{j\ol j}(w)|u_{J,K}(w)|^2,
\]
and \eqref{eq:weak_n-1-q_convexity} follows.

\end{proof}

\begin{lem}\label{lem-qconvex} Let $D\Subset \cx^n$ be a bounded weakly  $q$-convex domain, where $1\leq q \leq n$ and let $\delta =\sup_{z,z'\in D}\abs{z-z'}$ be the diameter of $D$. Let $0\leq p \leq n$. Then for each $f\in L^2_{p,q}(D)$ such that $\dbar f=0$,  there is a $u\in L^2_{p,q-1}(D)$ such that  $\dbar u=f$,
\begin{equation}\label{eq-poincare} \norm{u}_{L^2(D)} \leq \sqrt{\frac{e}{q}}\cdot \delta \cdot\norm{f}_{L^2(D)},\end{equation}
and the coefficients of $u$ are locally in the Sobolev space $W^1$.

\end{lem}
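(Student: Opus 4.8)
The plan is to obtain the solution operator from H\"ormander's weighted $L^2$-estimate, applied with the weight $\phi(z)=t|z|^2$ for a suitably chosen constant $t>0$, and then optimize $t$ to produce the explicit constant $\sqrt{e/q}\cdot\delta$. First I would recall that weak $q$-convexity (Definition~\ref{defn:weak_q_convex}) gives, on smooth exhausting subdomains, the positivity of the sum of any $q$ eigenvalues of the complex Hessian of a plurisubharmonic-type exhaustion; combined with Lemma~\ref{lem:Levi-form_estimates} this yields the basic a priori inequality $\norm{\dbar^*_\phi u}_\phi^2+\norm{\dbar u}_\phi^2\ge \int_D (H\phi-\gamma_\phi I)(u,u)e^{-\phi}\,dV$ for $u\in\dom\dbar^*_\phi\cap\dom\dbar$ in bidegree $(p,q)$ (the boundary term is favorably signed by weak $q$-convexity, or simply absent after passing to a smooth exhaustion of $D$ by weakly $q$-convex domains, which exists by the same mollification argument as in Section~\ref{sec:reduction_to_C3}). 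For $\phi=t|z|^2$ we have $H\phi = tI$ and $\gamma_\phi = \sum_j\ol{L_j}\phi_j$; the point is that $(H\phi-\gamma_\phi I)$ acting on a $(p,q)$-form is bounded below by $(\text{sum of any }q\text{ eigenvalues of }tI)=qt$ times the identity — here one uses that $H\phi(u,u)=\sum'\sum_{j,k}\phi_{j\bar k}u_{J,jK}\ol{u_{J,kK}}$ picks out $q$ eigenvalues per term via \eqref{eq-Hessian}. This gives $\norm{\dbar^*_\phi u}_\phi^2+\norm{\dbar u}_\phi^2\ge qt\norm{u}_\phi^2$ on the smooth approximating domains, hence on $D$.

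Next I would run the standard duality/Hilbert-space argument: given $\dbar f=0$, decompose and use the estimate to produce $u$ with $\dbar u=f$ and $\norm{u}_\phi^2\le \frac{1}{qt}\norm{f}_\phi^2$. Now convert back to the unweighted norm: since $0\le |z|^2\le \delta^2$ after translating $D$ so that it is as close to the origin as possible (more precisely, centering a ball of radius $\delta/2$ — one can translate so that $|z|\le\delta/2$ on $D$, wait: the diameter bound only gives $|z-z'|\le\delta$, so after translating the center of a smallest enclosing ball to $0$ one gets $|z|^2\le \delta^2/4$; but to keep the clean constant I would instead just translate so $\sup_D|z|^2 =: M$ is controlled and carry $M$ through, then note $M\le\delta^2$ crudely or $M\le \delta^2/4$ sharply). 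We get $e^{-t M}\norm{u}_{L^2}^2\le \norm{u}_\phi^2$ and $\norm{f}_\phi^2\le\norm{f}_{L^2}^2$, so $\norm{u}_{L^2}^2\le \frac{e^{tM}}{qt}\norm{f}_{L^2}^2$. Minimizing $e^{tM}/(qt)$ over $t>0$ gives $t=1/M$ and value $eM/q$, yielding $\norm{u}_{L^2}\le\sqrt{eM/q}\,\norm{f}_{L^2}\le \sqrt{e/q}\cdot\delta\cdot\norm{f}_{L^2}$ (using $M\le\delta^2$).

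Finally, for the interior $W^1_{\mathrm{loc}}$-regularity of the coefficients of $u$: the solution produced is the canonical (minimal-norm) solution, i.e.\ $u=\dbar^*_\phi N f$ lies in $(\ker\dbar)^\perp$ and is $\dbar$-closed-orthogonal, hence satisfies $\dbar u=f\in L^2$ and $\dbar^*_\phi u=0$; thus $u$ solves an elliptic system (the weighted $\dbar\oplus\dbar^*_\phi$-complex is elliptic in the interior), so interior elliptic regularity — or, more elementarily, the fact that $\Delta$-ellipticity of $\dbar^*\dbar+\dbar\dbar^*$ away from the boundary together with $f\in L^2$ — gives $u\in W^1_{\mathrm{loc}}(D)$. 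Alternatively, since we only need local regularity, one can localize and quote interior regularity for the $\dbar$-Neumann / $\dbar$ complex directly. The main obstacle is bookkeeping the constant cleanly: making sure the eigenvalue lower bound $(H\phi-\gamma_\phi I)\ge qt$ is applied correctly per term in \eqref{eq-Hessian} rather than globally, and being careful about whether one gets $\delta^2$ or $\delta^2/4$ so that the stated constant $\sqrt{e/q}\cdot\delta$ is valid (the crude bound $\sup_D|z|^2\le\delta^2$ after a suitable translation suffices and is safe). Everything else is the textbook H\"ormander machinery applied with a transparent choice of weight.
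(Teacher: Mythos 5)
Your overall route is the same as the paper's: the paper's proof of Lemma~\ref{lem-qconvex} is essentially a citation of H\"ormander's weighted $L^2$ machinery, Ho's observation that \eqref{eq:weak_q_convexity} suffices for the basic estimate, the constant computation of Chen--Shaw, an exhaustion by smooth weakly $q$-convex subdomains, and interior elliptic regularity for the $W^1_{\mathrm{loc}}$ claim. Your translation ($|z|\leq\delta$ after moving a point of $\ol D$ to the origin, so the crude bound suffices), the optimization $t=\delta^{-2}$ giving $e\delta^2/q$, and the elliptic-regularity argument for the canonical solution all match.

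One concrete error needs fixing. The a priori inequality on the pseudoconvex side is \eqref{eq-twistedbasicidentity} with the boundary term dropped (it is nonnegative by \eqref{eq:weak_q_convexity}) and the gradient term dropped; what remains is $\int_D H\phi(u,u)e^{-\phi}\,dV$ with \emph{no} $\gamma_\phi$ correction. The term $-\gamma_\phi I$ belongs to the modified identity of Theorem~\ref{thm:pseudoconcave_identity}, where the gradient term has been spent on integration by parts against the pseudoconcave boundary; it has no business in this lemma. Moreover your claimed lower bound is false as stated: for $\phi=t|z|^2$ one computes $\gamma_\phi=(n-1)t$, so $(H\phi-\gamma_\phi I)(u,u)=t\bigl(q-(n-1)\bigr)|u|^2$, which is nonpositive for $q\leq n-1$ and certainly not $\geq qt|u|^2$. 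The correct statement is simply $H\phi(u,u)=qt|u|^2$ via \eqref{eq-Hessian}, which gives $\|\dbar u\|_\phi^2+\|\dbar^*_\phi u\|_\phi^2\geq qt\|u\|_\phi^2$ and then your constant. A second, minor point: since $D$ carries no boundary regularity hypothesis, the exhaustion must come from regularizing the $q$-subharmonic function $-\log(-\rho)+C|z|^2$ of Definition~\ref{defn:weak_q_convex} (as the paper does elsewhere), not from the mollification of Section~\ref{sec:reduction_to_C3}, which presupposes a $\mathcal{C}^2$ boundary; the boundary term on each smooth subdomain is then present but nonnegative, not absent.
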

\begin{proof}
In the special case when $D$ is further assumed to be pseudoconvex (i.e. weakly $1$-convex) this is
a well-known result of Hörmander (\cite{Hor65}, see also \cite[Theorem 4.3.4]{ChSh01}), where $u$ is the so-called ``canonical solution'' of the $\dbar$-problem, i.e., the solution with minimal $L^2$-norm.
When the domain is only weakly $q$-convex, using the fact that  conclusion \eqref{eq:weak_q_convexity} in Lemma~\ref{lem:Levi-form_estimates}
suffices to prove the ``basic estimate'' (\cite[Proposition 4.3.3]{ChSh01}), this was observed in
\cite[Theorem~3.1]{Ho91}. The precise constant can be found exactly as in \cite[Theorem 4.3.4]{ChSh01}.
As in the pseudoconvex case, one first proves the existence of $u$ for $\mathcal{C}^2$ boundaries which are
weakly $q$-convex, then uses an exhaustion of $D$ by weakly $q$-convex domains (see   \cite[Theorem 2.2.1]{Hor65}). The claim about local $W^1$-regularity of the canonical solution follows
as usual from the interior elliptic regularity of the complex Laplacian.
\end{proof}

We will  need the modified Hodge star operator used in \cite{LiSh13}, which is a special case of the Hodge star
 of a  line bundle with a Hermitian metric (see \cite[Section 2.4]{ChSh12}).  Here the bundle $E$ is a trivial line bundle over a
 domain $D\subset \cx^n$ with
 Hermitian metric $e^{-\phi}$. The dual bundle $E^*$ is then also trivial, but has the dual Hermitian metric $e^{\phi}$.
 Denoting by $\Lambda^{p,q}(D)$ the space of all forms of degree $(p,q)$  on $D$, the operator
$\star_\phi:\Lambda^{p,q}(D)\rightarrow\Lambda^{n-p,n-q}(D)$ is defined pointwise by the relation
\begin{equation}\label{eq-hodge}
  \left<v,u\right>e^{-\phi}dV=v\wedge\star_\phi u
\end{equation}
for all $v\in\Lambda^{n-p,n-q}(D)$.  It is well-known that  the Hodge-star operator induces an isometry of  $L^2_{p,q}(D,E)$ (square integrable $(p,q)$-forms on $D$ with values in the bundle $E$)  with  $L^2_{p,q}(D,E^*)$.
In fact,
for $(p,q)$-forms $\Psi$ and $\Phi$, we have  the easily verified pointwise relation (see \cite[Lemma 3.2]{LiSh13})
 \begin{equation}
    \label{eq:hodge_star_isometry}
     \left \langle\star_\phi\Phi,\star_\phi\Psi\right\rangle e^\phi dV=\left\langle\Psi,\Phi\right\rangle e^{-\phi}dV,
 \end{equation}
which, when $\Psi=\Phi$, can be integrated over $D$ to obtain that $\star_\phi:L^2_{p,q}(D,E)\to L^2_{p,q}(D,E^*)$ is an isometry of Hilbert spaces:
\begin{equation}
    \label{eq:hodge_star2}
\norm{\Phi}_\phi^2 = \norm{\star_\phi \Phi}_{-\phi}^2.\end{equation}
\subsection{Proof of  Proposition~\ref{prop:dbar_mix_solvability}; the \texorpdfstring{$\mathcal{C}^2$}{C2} case}
\label{sec:proof_main_theorem}

In this section, we will prove Proposition~\ref{prop:dbar_mix_solvability} under the assumption that both the envelope  $\Omega_1$ and the hole $\Omega_2$  have $\mathcal{C}^2$ boundaries (this is also the boundary regularity assumed in \cite[Theorem~2.2]{LiSh13}).

Introduce the weight  \[ \psi(z)=t|z|^2\]
for $t>0$ to be determined later.
Let $\dbar^*_{\mathrm{mix},\psi}$ denote the Hilbert-space adjoint of $\dbar_{\mathrm{mix}}$ with respect to the inner product $L^2(\Omega,e^{-\psi})$.
  We will show that a ``basic estimate''
  holds for $\dbar_{\mathrm{mix}}$.

Let $U_2$ be a neighborhood of $\ol{\Omega_2}$ such that $\rho$ defined by \eqref{eq-rhodef} is a $\mathcal{C}^2$ function on $U_2\setminus \ol{\Omega_2}$. Let $\chi\in \mathcal{C}^\infty_0(U_2)$ satisfy $\chi\equiv 1$ in a neighborhood of $\ol{\Omega_2}$.

Suppose $u\in\mathcal{C}^1_{(p,q)}(\overline\Omega)\cap\dom(\dbar_{\mathrm{mix}})\cap\dom(\dbar^*_{\mathrm{mix},\psi})$.
Define  a $(p,q)$-form $u_2$ on $\cx^n$ by setting $u_2=\chi u$ on $\ol\Omega$ and $u_2\equiv 0$ on $\ol\Omega^c$,
and  define $u_1\in \mathcal{C}^1_{0,(p,q)}(\ol\Omega_1)$ by setting $u_1=(1-\chi)u$ on $\ol\Omega$ and $u_1\equiv 0$ on $\Omega_2$.  Note that $u=u_1+u_2$ on $\ol\Omega$, and the form  $u_2$   has support in $ \ol{\Omega}$.

Now $u_1\in\mathcal{C}^1_{(p,q)}(\overline\Omega_1)\cap\dom(\dbar)\cap\dom(\dbar^*_{\psi})$, so we may apply the standard Morrey-Kohn-Hörmander estimate \eqref{eq-twistedbasicidentity} with $\phi=\psi$ to $u_1$ on $\Omega_1$.  Since $\Omega_1$ is assumed to be weakly $q$-convex, we may use \eqref{eq:weak_q_convexity} to estimate the boundary term and obtain
\[
  \norm{\dbar u_1}_\psi^2+ \norm{\dbar^*_\psi u_1}^2_\psi \geq
  \int_\Omega(H\psi)(u_1,u_1) e^{-\psi} dV,
\]
which becomes
\begin{equation}
\label{eq:key_estimate_1}
  \norm{\dbar_{\mathrm{mix}} u_1}^2_{\psi}+\norm{\dbar^*_{\mathrm{mix},\psi} u_1}^2_{\psi}\geq
  tq\norm{u_1}^2_{t|z|^2}.
\end{equation}
since  $H\psi(u_1,u_1)=tq|u_1|^2$.

Now we derive a similar estimate for the form $u_2$, using the modified Morrey-Kohn-Hörmander formula of Theorem~\ref{thm:pseudoconcave_identity},
where $\phi=-\psi$, $u=\star_\psi u_2$, and $\star_\psi$ is the Hodge star operator defined above in \eqref{eq-hodge}. We first assume that $q\leq n-1$.  Since the form $u_2$ satisfies the $\dbar$-Dirichlet boundary conditions along $b\Omega_2$, it
follows that $\star_\psi u_2$ satisfies the $\dbar$-Neumann boundary conditions (cf. \cite[Proposition~1]{ChSh12}), so that
\[ \star_{\psi} u_2\in\mathcal{C}^1_{(n-p,n-q)}(\ol\Omega)\cap\dom(\dbar)\cap\dom(\dbar^*_{-\psi}),\]
 so we can apply \eqref{eq:pseudoconcave_identity} to $\star_{\psi}u_2$ and drop the positive terms to obtain:
\begin{multline}
  \norm{\dbar(\star_{\psi}u_2)}_{-\psi}^2+ \norm{\dbar^*_{-\psi}(\star_{\psi}u_2)}^2_{-\psi} \geq  -\int_{b\Omega_2}(H\rho-\tau I)(\star_{\psi}u_2,\star_{\psi}u_2) e^{\psi}d\sigma\\
  +\int_{\Omega}\left(H(-\psi)-\gamma_{-\psi} I\right)(\star_{\psi}u_2,\star_{\psi}u_2) e^{\psi} dV-4 \norm{\eta(\star_{\psi}u_2)}_{-\psi}^2,
\label{eq:hodge_star_estimate}
\end{multline}
where the defining function $\rho$ is the signed distance function as in \eqref{eq-rhodef}.

 When $q=n$, then $\star_\psi u_2$ is a $(p,0)$-form, so $\norm{\dbar(\star_\psi u_2)}_{-\psi}=\norm{\overline\nabla(\star_\psi u_2)}_{-\psi}$.  Using Lemma \ref{lem:gradient_identity}, we have
\[
\norm{\ol{\nabla} (\star_\psi u_2)}_{-\psi}^2 \geq \int_{bD} \tau \abs{\star_\psi u_2}^2 e^{\psi} d\sigma
 -\gipr{\gamma_{-\psi} \star_\psi u_2,\star_\psi u_2}_{-\psi}
  -4\norm{\eta \star_\psi u_2}_{-\psi}^2.
\]
If we adopt the convention that $\dbar^*_{-\psi}(\star_{\psi}u_2)=0$,
\[
  (H\rho)(\star_{\psi}u_2,\star_{\psi}u_2)=0,
\]
and
\[
(H(-\psi))(\star_{\psi}u_2,\star_{\psi}u_2)=0
\]
when $\star_{\psi}u_2$ is a function, then we have precisely \eqref{eq:hodge_star_estimate}.

Since $\Omega_2$ is weakly $(q-1)$-convex and $\star_{\psi}u_2$ is an $(n-p,n-q)$-form, the conclusion \eqref{eq:weak_n-1-q_convexity} of Lemma~\ref{lem:Levi-form_estimates}
implies that $(H\rho-\tau I)(\star_{\psi}u_2,\star_{\psi}u_2)|_{b\Omega_2}\leq 0$, so we have
\begin{multline*}
  \norm{\dbar(\star_{\psi}u_2)}_{-\psi}^2+ \norm{\dbar^*_{-\psi}(\star_{\psi}u_2)}^2_{\phi} \geq\int_{\Omega}(H(-\psi)-\gamma_{-\psi} I)(\star_{\psi}u_2,\star_{\psi}u_2) e^{\psi} dV-4 \norm{\eta(\star_{\psi}u_2)}_{-\psi}^2\\
=-\int_{\Omega}(H\psi-\gamma_{\psi} I)(\star_{\psi}u_2,\star_{\psi}u_2) e^{\psi} dV-4 \norm{\eta(\star_{\psi}u_2)}_{-\psi}^2.
\end{multline*}
Since $\psi(z)=t|z|^2$, we have \[(H\psi-\gamma_{\psi} I)(\star_{\psi}u_2,\star_{\psi}u_2)=t((n-q)-(n-1))\abs{\star_\psi u_2}^2,\]
 so
\begin{equation} \label{eq-last1}
  \norm{\dbar(\star_{\psi}u_2)}_{-\psi}^2+ \norm{\dbar^*_{-\psi}(\star_{\psi}u_2)}^2_{-\psi} \geq t(q-1)\norm{\star_{\psi}u_2}^2_{-\psi}-4 \norm{\eta(\star_{\psi} u_2)}_{-\psi}^2.
\end{equation}
A standard calculation  (cf. \cite[Lemma~2]{ChSh12})  shows that we have formally (i.e. as differential operators on smooth functions) that
\[ \dbar^*_{\psi}= -\star_{-\psi}\,\dbar\, \star_\psi
\quad \text{ and }
 \dbar = -\star_{-\psi} \dbar^*_{-\psi}\star_{\psi}. \]
 Therefore using \eqref{eq:hodge_star2} we see that
 \[ \norm{\dbar(\star_{\psi}u_2)}_{-\psi} = \norm{- \star_{-\psi}\dbar(\star_{\psi}u_2)}_{\psi}=\norm{\dbar^*_{\mathrm{mix},\psi}u_2}_\psi\]
 and similarly
 \[ \norm{\dbar^*_{-\psi}(\star_{\psi}u_2)}_{-\psi}= \norm{- \star_{-\psi}\dbar^*_{-\psi}(\star_{\psi}u_2)}_{\psi}  =\norm{\dbar_{\mathrm{mix}} u_2}_{\psi}.\]
Therefore \eqref{eq-last1} is equivalent to
\[
  \norm{\dbar^*_{\mathrm{mix},\psi}  u_2}_{\psi}^2+ \norm{\dbar_{\mathrm{mix}} u_2}^2_{\psi}\geq t(q-1)\norm{u_2}^2_{\psi}-4 \norm{\eta u_2}_{\psi}^2.
\]
Define the constant $B$ associated to the annulus $\Omega$ by
\begin{equation}\label{eq-Bdef}
B = \sup_{U\setminus \Omega_2} 4 \eta^2,
\end{equation}
which is finite since by assumption  the open set $U$ was so chosen that  $\rho \in \mathcal{C}^2(\ol{U}\setminus \Omega_2)$.
This gives us
\begin{equation}
\label{eq:key_estimate_2}
  \norm{\dbar^*_{\mathrm{mix},\psi}  u_2}_{\psi}^2+ \norm{\dbar_{\mathrm{mix}} u_2}^2_{\psi}\geq (t(q-1)-B)\norm{u_2}^2_{\psi}.
\end{equation}
Since $\sqrt{a^2+b^2}\leq |a|+|b|$, we may assume that $t>\frac{B}{q-1}$ and rewrite \eqref{eq:key_estimate_1} and \eqref{eq:key_estimate_2} in the form
\begin{align*}
  \norm{\dbar_{\mathrm{mix}} u_1}_{{\psi}}+\norm{\dbar^*_{\mathrm{mix},{\psi}} u_1}_{{\psi}}&\geq
  \sqrt{tq}\norm{u_1}_{{\psi}},\\
  \norm{\dbar_{\mathrm{mix}} u_2}_{{\psi}}+\norm{\dbar^*_{\mathrm{mix},{\psi}} u_2}_{{\psi}}&\geq
  \sqrt{t(q-1)-B}\norm{u_2}_{{\psi}}.
\end{align*}
Note that
\begin{equation}
\label{eq:cutoff_estimate}
  \norm{\dbar_{\mathrm{mix}} u_2}_{{\psi}}\leq\norm{\dbar_{\mathrm{mix}} u}_{{\psi}}+\sup_{\Omega}|\overline\nabla\chi|\norm{u}_{{\psi}},
\end{equation}
with similar estimates for $\norm{\dbar_{\mathrm{mix}} u_1}_{{\psi}}$, $\norm{\dbar^*_{\mathrm{mix},{\psi}} u_2}_{{\psi}}$, and $\norm{\dbar^*_{\mathrm{mix},{\psi}} u_1}_{{\psi}}$, so since $|\overline\nabla\chi|=\frac{1}{2}|\nabla\chi|$, we have
\begin{align*}
  \norm{\dbar_{\mathrm{mix}} u}_{{\psi}}+\norm{\dbar^*_{\mathrm{mix},{\psi}} u}_{{\psi}}+\sup_{\Omega}|\nabla\chi|\norm{u}_{{\psi}}&\geq
  \sqrt{tq}\norm{u_1}_{{\psi}},\\
  \norm{\dbar_{\mathrm{mix}} u}_{{\psi}}+\norm{\dbar^*_{\mathrm{mix},{\psi}} u}_{{\psi}}+\sup_{\Omega}|\nabla\chi|\norm{u}_{{\psi}}&\geq
  \sqrt{t(q-1)-B}\norm{u_2}_{{\psi}}.
\end{align*}
Since $tq>t(q-1)-B$, we use the same constant in the lower bound of each estimate.  Consequently, if we  let
\begin{equation}\label{eq-Adef}
A=\sup_{\Omega}\abs{\nabla\chi}
\end{equation} we have
\[
  \norm{u}_{{\psi}}\leq\left(\frac{2}{\sqrt{t(q-1)-B}}\right)\left(\norm{\dbar_{\mathrm{mix}} u}_{{\psi}}+\norm{\dbar^*_{\mathrm{mix},{\psi}} u}_{{\psi}}+A\norm{u}_{{\psi}}\right).
\]
If $t>\frac{4A^2+B}{q-1}$, then we can rearrange terms and obtain
\begin{equation}
\label{eq:closed_range_mixed}
  \norm{u}_{{\psi}}\leq\frac{2}{\sqrt{t(q-1)-B}-2A}\left(\norm{\dbar_{\mathrm{mix}} u}_{{\psi}}+\norm{\dbar^*_{\mathrm{mix},{{\psi}}} u}_{{\psi}}\right).
\end{equation}
By   \cite[Lemma 3.1]{LiSh13}, the estimate \eqref{eq:closed_range_mixed} will apply to any $u\in\dom(\dbar_{\mathrm{mix}})\cap\dom(\dbar^*_{\mathrm{mix},{\psi}})$.

It now follows that for $2\leq q \leq n$, the operator $\dbar_{\rm mix}$ satisfies the analog of the ``basic estimate'' of $L^2$-theory (i.e.,  \cite[equation (4.3.16)]{ChSh01}).
Then the result follows using a classical argument (cf. \cite[Theorem 4.3.4]{ChSh01}). We include the details for completeness.


For $2\leq q\leq n$, suppose $f\in L^2_{p,q}(\Omega)\cap\dom(\dbar_{\mathrm{mix}})$ such that $\dbar_{\mathrm{mix}}f=0$.  For an arbitrary $g\in L^2_{p,q}(\Omega)\cap\dom\dbar^*_{\mathrm{mix},{\psi}}$, we may decompose $g=g_1+g_2$, where $g_1\in(\ker\dbar_{\mathrm{mix}})^\bot$,  $g_2\in\ker\dbar_{\mathrm{mix}}$, and  the orthogonal complement is taken with respect to the weighted inner product.  Then
\[
  \abs{\left(f,g\right)_{{\psi}}}=\abs{\left(f,g_2\right)_{{\psi}}}\leq\norm{f}_{{\psi}}\norm{g_2}_{{\psi}}.
\]
Applying \eqref{eq:closed_range_mixed} to $\norm{g_2}_{{\psi}}$ and using the fact that $g_1\in\overline{\range\dbar^*_{\mathrm{mix},{\psi}}}\subset\ker\dbar^*_{\mathrm{mix},{\psi}}$, we have
\[
  \norm{g_2}_{{\psi}}\leq\frac{2}{\sqrt{t(q-1)-B}-2A}\norm{\dbar^*_{\mathrm{mix},{{\psi}}} g_2}_{{\psi}}=\frac{2}{\sqrt{t(q-1)-B}-2A}\norm{\dbar^*_{\mathrm{mix},{{\psi}}} g}_{{\psi}}.
\]
Hence,
\[
  \abs{\left(f,g\right)_{{\psi}}}\leq\frac{2}{\sqrt{t(q-1)-B}-2A}\norm{f}_{{\psi}}\norm{\dbar^*_{\mathrm{mix},{{\psi}}} g}_{{\psi}}.
\]
As a result, there is a bounded linear functional $\dbar^*_{\mathrm{mix},{{\psi}}} g\mapsto\left(f,g\right)_{{\psi}}$ on $\range\dbar^*_{\mathrm{mix},{\psi}}$ with norm bounded by $\frac{2}{\sqrt{t(q-1)-B}-2A}\norm{f}_{{\psi}}$.  Using the Hahn-Banach Theorem to extend this to the closure of $\range\dbar^*_{\mathrm{mix},{\psi}}$ and then applying the Riesz Representation Theorem, there exists an element $u\in L^2_{p,q-1}(\Omega)$ lying in the closure of $\range\dbar^*_{\mathrm{mix},{\psi}}$ such that for every $g\in L^2_{p,q}(\Omega)\cap\dom\dbar^*_{\mathrm{mix},{\psi}}$ we have
\[
  (f,g)_{{\psi}}=(u,\dbar^*_{\mathrm{mix},{\psi}}g)_{{\psi}}
\]
and
\begin{equation} \label{eq-prediam}
  \norm{u}_{{\psi}}\leq\frac{2}{\sqrt{t(q-1)-B}-2A}\norm{f}_{{\psi}}.
\end{equation}
It immediately follows that $\dbar_{\mathrm{mix}}u=f$.  After a translation, we may assume that $|z|\leq\delta$ on $\overline\Omega$, where
\begin{equation}\label{eq-diam} \delta = \sup_{z,z'\in \Omega} \abs{z-z'}\end{equation}
is the diameter of the set $\Omega$.  Hence, \eqref{eq-prediam} gives us
\[
  \norm{u}\leq\frac{2e^{t\delta^2}}{\sqrt{t(q-1)-B}-2A}\norm{f}.
\]
We may check that the resulting constant is optimized when $t=\frac{\left(A+\sqrt{A^2+\frac{q-1}{2\delta^2}}\right)^2+B}{q-1}$.  Hence, we have
\begin{equation}
\label{eq:closed_range_mixed_dbar}
  \norm{u}\leq C_q(\Omega,\mathrm{mix})\norm{f},
\end{equation}
where
\begin{equation}
\label{eq:mixed_dbar_constant}
  C_q(\Omega,\mathrm{mix})\leq\frac{2}{\sqrt{A^2+\frac{q-1}{2\delta^2}}-A}\exp\left(\frac{\delta^2}{q-1}\left(\left(A+\sqrt{A^2+\frac{q-1}{2\delta^2}}\right)^2+B\right)\right).
\end{equation}
In \eqref{eq:mixed_dbar_constant}, $A$ and $B$ are given by \eqref{eq-Adef} and \eqref{eq-Bdef} respectively, and $\delta$ is given by \eqref{eq-diam}.

We now have proved the special case of Proposition~\ref{prop:dbar_mix_solvability} when the boundary is $\mathcal{C}^2$.

\subsection{\texorpdfstring{$W^1$}{W1} Estimates}
\label{sec:W1_estimates}

Before we can prove Proposition \ref{prop:dbar_mix_solvability} when the boundary is $\mathcal{C}^{1,1}$, we will need to prove $W^1$ estimates for the hole.  This is because we do not necessarily have an exhaustion for the annulus, but we are guaranteed to have an exhaustion for the hole.  In order to facilitate the study of the hole using results on the annulus, we will need a standard extension result for Lipschitz domains.  In $\mathbb{R}^n$, a bounded Lipschitz domain $D$ is defined to be a domain with the property that $bD$ can be covered by a finite open cover $\{U_j\}_{j\in J}$ with the property that on each $U_j$ there exist orthonormal coordinates $\{x_1,\ldots,x_n\}$ and a Lipschitz function $\psi_j:\mathbb{R}^{n-1}\rightarrow\mathbb{R}$ with Lipschitz constant $M_j$ such that
\[
  D\cap U_j=\{x\in U_j:x_n>\psi_j(x_1,\ldots,x_{n-1})\}.
\]
Let $\{\chi_j\}_{j\in J}$ denote a partition of unity subordinate to $\{U_j\}_{j\in J}$.  The irregularity of a Lipschitz domain can be estimated by the quantities
\begin{equation}
\label{eq:Lipschitz_constants}
  \sup_{j\in J}M_j\text{ and }\sup_{j\in J}|\nabla\chi_j|.
\end{equation}
\begin{lem}
\label{lem:extension_operator}
  Let $\Omega_1,\Omega_2\subset\mathbb{C}^n$ such that $\overline\Omega_2\subset\Omega_1$, and suppose that $\Omega_2$ has Lipschitz boundary.    Then there exists an operator $E:W^1_{p,q}(\Omega_2)\rightarrow W^1_{p,q}(\Omega_1)$ such that
  \begin{enumerate}
    \item $Ef|_{\Omega_2}=f$ for any $f\in W^1_{p,q}(\Omega_2)$,
    \item $Ef$ is compactly supported in $\Omega_1$, and
    \item \begin{equation}\label{eq:extension_estimate}\norm{Ef}_{W^1}\leq E(\Omega_1,\Omega_2)\norm{f}_{W^1}\end{equation}
    for any $f\in W^1_{p,q}(\Omega_2)$, where $E(\Omega_1,\Omega_2)>0$ is a constant depending only on the two {quantities} in  \eqref{eq:Lipschitz_constants} and on  $\dist(b\Omega_1,b\Omega_2)$.
  \end{enumerate}
\end{lem}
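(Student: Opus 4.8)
The plan is to reduce to scalar functions, invoke the classical extension theorem for Sobolev functions on bounded Lipschitz domains to extend $f$ past $b\Omega_2$, and then multiply by a cutoff supported in $\Omega_1$ to regain compact support. Since a $(p,q)$-form is a finite tuple of scalar coefficients with respect to the global constant frame $\{dz^J\wedge d\bar{z}^K\}_{|J|=p,\,|K|=q}$, it is enough to build a bounded linear extension operator on scalar functions, $E\colon W^1(\Omega_2)\to W^1(\Omega_1)$, satisfying the three stated properties, and then apply it coefficientwise; this costs only the dimensional factor $\sqrt{\binom np\binom nq}$.

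Fix $f\in W^1(\Omega_2)$. By the extension theorem for bounded Lipschitz domains there is $\widehat f\in W^1(\mathbb{C}^n)$ with $\widehat f|_{\Omega_2}=f$ and $\norm{\widehat f}_{W^1(\mathbb{C}^n)}\le C\norm f_{W^1(\Omega_2)}$, where $C$ depends only on the Lipschitz character of $\Omega_2$, i.e. on the two quantities in \eqref{eq:Lipschitz_constants}. (One may instead construct $\widehat f$ by hand: write $f=\chi_0 f+\sum_{j\in J}\chi_j f$ after adjoining an interior cutoff $\chi_0\Subset\Omega_2$, extend $\chi_0 f$ by zero, and for each $j\in J$ extend $\chi_j f$ across the graph $\{x_n=\psi_j(x')\}$ by the reflection $(x',x_n)\mapsto(x',2\psi_j(x')-x_n)$; this map is bi-Lipschitz with constant controlled by $M_j$, is its own inverse, fixes the graph, and has unit Jacobian, so by Rademacher's theorem and the chain rule the reflected piece lies in $W^1$ across $b\Omega_2$ with norm $\le C(M_j,|\nabla\chi_j|)\norm f_{W^1(\Omega_2)}$; summing the finitely many pieces, re-cut by fixed cutoffs supported in the $U_j$, yields $\widehat f$ and makes the dependence of $C$ on $\sup_jM_j$ and $\sup_j|\nabla\chi_j|$ explicit.) Now set $d=\dist(b\Omega_1,b\Omega_2)>0$ and pick $\theta\in\mathcal{C}^\infty_0(\Omega_1)$ with $\theta\equiv1$ on a neighborhood of $\overline{\Omega_2}$, $\supp\theta\subset\{z:\dist(z,\overline{\Omega_2})<d/2\}$, and $|\nabla\theta|\le C_0 d^{-1}$, which is possible because $\{z:\dist(z,\overline{\Omega_2})<d/2\}\Subset\Omega_1$. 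Define $Ef=\theta\,\widehat f$. Then $Ef|_{\Omega_2}=f$ since $\theta\equiv1$ there, $Ef$ is compactly supported in $\Omega_1$, and by the Leibniz rule
\[
  \norm{Ef}_{W^1}\le\norm{\theta}_\infty\norm{\widehat f}_{W^1}+\norm{\nabla\theta}_\infty\norm{\widehat f}_{L^2}\le(1+C_0 d^{-1})\,C\,\norm f_{W^1(\Omega_2)}=:E(\Omega_1,\Omega_2)\norm f_{W^1(\Omega_2)},
\]
so $E(\Omega_1,\Omega_2)$ depends only on the quantities in \eqref{eq:Lipschitz_constants} and on $d$. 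Applying $E$ coefficientwise proves the lemma for forms.

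Essentially everything here is routine; the only genuine input is the Lipschitz extension theorem, and in the hand-made version the one point to watch is that reflection across a merely Lipschitz graph does produce a $W^1$ function across $b\Omega_2$ and that the constants survive the finite partition of unity, so that the final bound depends only on the stated geometric data.
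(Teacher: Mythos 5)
Your proposal is correct and follows essentially the same route as the paper, which simply cites the classical Lipschitz extension theorem (Stein, Section VI.3, or Evans, Section 5.4, adapted to Lipschitz boundaries by straightening) and implicitly relies on a cutoff to achieve compact support in $\Omega_1$. Your explicit reflection construction and the cutoff argument are the standard details behind that citation, and your tracking of the dependence of the constant on the Lipschitz data and on $\dist(b\Omega_1,b\Omega_2)$ matches what the lemma asserts.
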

The proof of this lemma can be found in   \cite[Section VI.3]{Ste70}, which also proves that $E$ can be constructed to be continuous in $W^s_{p,q}$ for all $s\geq 0$ (although the resulting constant also depends on the dimension $n$), or  in \cite[Section~5.4]{Eva10}, which provides a fairly explicit construction in the $s=1$ case that we will need.  In particular, although the proof in \cite{Eva10} is only stated for $C^1$ domains, we note that the boundary of a Lipschitz domain can be locally straightened using a Lipschitz map, which will have uniformly bounded derivatives almost everywhere, and this suffices for estimates in $W^1$.

\begin{proof}[Proof of Theorem \ref{thm:W^1_solvability}:]

  We first assume that the boundary of $D$ is $\mathcal{C}^2$ and weakly $q$-convex.  Let $\Omega_1$ be a smooth, bounded, strictly pseudoconvex domain such that $\overline{D}\subset\Omega_1$ and let $\Omega=\Omega_1{\setminus} \overline D$.  Let $E:W^1_{p,q}(D)\rightarrow W^1_{p,q}(\Omega_1)$ be the operator given by Lemma \ref{lem:extension_operator}.  Given $f$ as in the hypotheses of the Theorem, we set $\tilde f=E f$.  Since $W^1_{p,q}(\Omega_1)\subset\dom(\dbar)$, we may set $g=\dbar\tilde f$.  Note that $g\in L^2_{p,q+1}(\Omega_1)$ and $g|_{D}=0$, so $g|_{\Omega}\in\dom(\dbar_{\mathrm{mix}})$.  When $2\leq q+1\leq n$, since we are assuming that $\Omega$ has a $\mathcal{C}^2$ boundary we may apply Theorem \ref{prop:dbar_mix_solvability} to $g$ and obtain $v\in L^2_{p,q}(\Omega)\cap\dom(\dbar_{\mathrm{mix}})$ satisfying $\dbar_{\mathrm{mix}}v=g$ and
  \begin{equation}
  \label{eq:v_estimate}
    \norm{v}_{L^2(\Omega)}\leq C_q(\Omega,\mathrm{mix})\norm{g}_{L^2(\Omega)}=C_q(\Omega,\mathrm{mix})\norm{\dbar\tilde f}_{L^2(\Omega)}\leq C_q(\Omega,\mathrm{mix})\norm{\tilde f}_{W^1(\Omega)}.
  \end{equation}
  When $q=n$ (and hence $g\equiv 0$), we let $v\equiv 0$.  We extend $v$ to $\tilde v\in\Omega_1$ by setting $\tilde v$ equal to zero on $\overline D$.  Since $\tilde v\in\dom(\dbar_{\mathrm{mix}})$, it follows that $\tilde v\in\dom\dbar\cap L^2_{p,q}(\Omega_1)$.  Set $h=\tilde f-\tilde v$.  Then $h\in\dom(\dbar)\cap L^2_{p,q}(\Omega_1)$ and
  \[
    \dbar h=\dbar\tilde f-\dbar\tilde v=g-g=0
  \]
  on $\Omega_1$.  Let $\tilde u\in L^2_{p,q-1}(\Omega_1)$ denote the canonical (i.e., $L^2$-minimal) solution to $\dbar\tilde u=h$ on $\Omega_1$.  Let $u=\tilde u|_{D}$.  On $D$, we have
  \[
    \dbar u=h|_{D}=\tilde f|_{D}-\tilde v|_{D}=f-0=f.
  \]
  Furthermore, the interior regularity of the $\dbar$-Neumann operator gives us
  \begin{equation}\label{eq-intelliptic}
    \norm{u}_{W^1(D)}\leq K(\Omega_1,D)\norm{h}_{L^2(\Omega_1)}
  \end{equation}
  for some constant $K(\Omega_1,D)>0$.  We note that this is an actual estimate, not an a priori estimate, and so we have $u\in W^1_{p,q-1}(D)$.  Continuing to estimate this norm, we have
  \begin{align*}
    \norm{u}_{W^1(D)}&\leq K(\Omega_1,D)\norm{\tilde f-\tilde v}_{L^2(\Omega_1)}\\
    &\leq K(\Omega_1,D)\left(\norm{\tilde f}_{L^2(\Omega_1)}+\norm{\tilde v}_{L^2(\Omega_1)}\right)\\
    &\leq K(\Omega_1,D)\left(\norm{\tilde f}_{W^1(\Omega_1)}+\norm{v}_{L^2(\Omega)}\right).
  \end{align*}
  Using \eqref{eq:v_estimate}, we have
  \begin{align*}
    \norm{u}_{W^1(D)}&\leq K(\Omega_1,D)\left(\norm{\tilde f}_{W^1(\Omega_1)}+C(\Omega,\mathrm{mix})\norm{\tilde f}_{W^1(\Omega)}\right)\\
    &\leq K(\Omega_1,D)\left(1+C_q(\Omega,\mathrm{mix})\right)E(\Omega_1,D)\norm{f}_{W^1(D)}.
  \end{align*}
  Hence, \eqref{eq:W^1_estimate} follows with the constant
  \begin{equation}
  \label{eq:W^1_constant_estimate}
    C\leq K(\Omega_1,D)\left(1+C(\Omega,\mathrm{mix})\right)E(\Omega_1,D).
  \end{equation}

  Now, suppose the boundary of $D$ is merely $\mathcal{C}^{1,1}$.  By regularizing the $q$-subharmonic function $-\log(-\rho)+C|z|^2$ given by Definition \ref{defn:weak_q_convex}, we may exhaust $D$ by smooth, weakly $q$-convex domains on which the $\mathcal{C}^2$-norm of the defining function for each signed distance function is uniformly bounded  (see \cite{fassina} for details).  Each constant in \eqref{eq:W^1_constant_estimate} is uniformly bounded with respect to this norm; in particular, the parameter $B$ in \eqref{eq:mixed_dbar_constant} is uniformly bounded with respect to the $\mathcal{C}^2$ norm, since $\eta$ depends only on two derivatives of the defining function.  Hence, we obtain \eqref{eq:W^1_estimate} on each domain in the exhaustion with uniform bounds on the associated constant.  Using a standard argument with weak limits, we obtain a solution to $\dbar u=f$ on $D$ satisfying \eqref{eq:W^1_estimate}.

\end{proof}

\subsection{Proof of Proposition~\ref{prop:dbar_mix_solvability}}

We imitate the proof of \cite[Theorem~2.2]{LiSh13}.  Briefly, we extend $f$ to $\tilde f\in L^2_{p,q}(\Omega_1)$ by setting $\tilde f=0$ on $\overline\Omega_2$. Since $f\in\dom(\dbar_{\mathrm{mix}})$, $\dbar\tilde f=0$ on $\Omega_1$.  Let $v\in L^2_{p,q-1}(\Omega_1)$ be the canonical  solution to $\dbar v=\tilde f$ on $\Omega_1$, as in Lemma~\ref{lem-qconvex}.  By interior elliptic regularity, $v\in W^1_{p,q-1}(\overline\Omega_2)$.  Since $\dbar v=0$ on $\Omega_2$, we may use Theorem \ref{thm:W^1_solvability} on each connected component of $\Omega_2$ to find $w\in W^1_{p,q-2}(\Omega_2)$ such that $\dbar w=v$ on $\Omega_2$.  We extend $w$ to $\tilde w\in W^1_{p,q-2}(\Omega_1)$, and let $u=v-\dbar\tilde w$. Then $u\in L^2_{p,q-1}(\Omega_1)$, $\dbar u=f$ on $\Omega$, and $u=0$ on $\Omega_2$, so $u\in\dom(\dbar_{\mathrm{mix}})$.

\subsection{Non-closed range for \texorpdfstring{$q=1$}{q=1} in annuli with mixed boundary conditions}

We note that the range of $q$ in Proposition~\ref{prop:dbar_mix_solvability} is sharp, in the sense that closed range for $\dbar_{\mathrm{mix}}$ fails when $q=1$.  In fact, we have the following:
\begin{prop}\label{prop-nonclosed}
Let $\Omega$ be an annulus  in $\cx^n$ with $n\geq 1$ in which the envelope is a bounded
pseudoconvex domain, and the hole is Lipschitz.
Then for $0\leq p \leq n$ the densely defined operator
\[ \dbar_{\rm mix}:L^2_{p,0}(\Omega)\rightarrow L^2_{p,1}(\Omega)\]
does not have closed range.
\end{prop}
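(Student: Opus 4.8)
The plan is to show that $\dbar_{\mathrm{mix}}$ restricted to $(p,0)$-forms is injective but fails to be bounded below, so that its range cannot be closed. Throughout, write $\Omega_1$ for the envelope and $\Omega_2$ for the hole.

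First I would record that $\ker\bigl(\dbar_{\mathrm{mix}}\colon L^2_{p,0}(\Omega)\to L^2_{p,1}(\Omega)\bigr)=\{0\}$. Indeed, if $u\in\dom(\dbar_{\mathrm{mix}})$ and $\dbar_{\mathrm{mix}}u=0$, then $\dbar u=0$ as a current on $\Omega$, and by Lemma~\ref{lem:extension} the extension $\tilde u$ of $u$ by zero to $\Omega_1$ lies in $\dom(\dbar)$ on $\Omega_1$. Since $\dbar\tilde u$ vanishes on $\Omega$ and on the open set $\Omega_2$ (where $\tilde u\equiv0$), it vanishes a.e.\ on $\Omega_1$, so $\tilde u$ is a holomorphic $(p,0)$-form on the connected domain $\Omega_1$ that vanishes on the nonempty open set $\Omega_2$; hence $\tilde u\equiv0$ and $u\equiv0$. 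Consequently, were $\range(\dbar_{\mathrm{mix}})$ closed, then $\dbar_{\mathrm{mix}}$ would be a closed injective operator onto the Hilbert space $\range(\dbar_{\mathrm{mix}})$, so its inverse would be closed and everywhere defined there, hence bounded by the closed graph theorem; that is, there would be $c>0$ with $\norm{u}_{L^2(\Omega)}\le c\,\norm{\dbar_{\mathrm{mix}}u}_{L^2(\Omega)}$ for all $u\in\dom(\dbar_{\mathrm{mix}})$. It therefore suffices to produce $u_j\in\dom(\dbar_{\mathrm{mix}})$ with $\liminf_j\norm{u_j}_{L^2(\Omega)}>0$ and $\norm{\dbar_{\mathrm{mix}}u_j}_{L^2(\Omega)}\to0$.

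To build these near-solutions I would use the Bergman space $A^2(\Omega_1)$, which is infinite-dimensional since $\Omega_1$ is bounded. Fix an increasing multi-index $I$ with $\abs I=p$ and an orthonormal sequence $\{g_j\}\subset A^2(\Omega_1)$; then $g_j\rightharpoonup0$ weakly in $A^2(\Omega_1)$, and by the standard interior (reproducing-kernel / normal-family) estimate the $g_j$ are locally uniformly bounded on $\Omega_1$ and tend to $0$ locally uniformly there. Because $\ol{\Omega_2}$ is a compact subset of $\Omega_1$, choose $\eps>0$ with $\{z:\dist(z,\ol{\Omega_2})\le2\eps\}\Subset\Omega_1$ and a cutoff $\chi\in\mathcal{C}^\infty(\cx^n)$, $0\le\chi\le1$, with $\chi\equiv0$ where $\dist(\cdot,\ol{\Omega_2})\le\eps$ and $\chi\equiv1$ where $\dist(\cdot,\ol{\Omega_2})\ge2\eps$, so that $K:=\supp d\chi$ is a compact subset of $\Omega_1$. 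Set $u_j=\chi\,g_j\,dz^I\in L^2_{p,0}(\Omega)$. Each $u_j$ vanishes in a neighborhood of $b\Omega_2$ and $\dbar u_j=g_j\,\dbar\chi\wedge dz^I\in L^2_{p,1}(\Omega)$, so $u_j\in\dom(\dbar_{\mathrm{mix}})$ (the constant sequence works in Definition~\ref{defn:mix}) and $\dbar_{\mathrm{mix}}u_j=\dbar u_j$. Then $\norm{\dbar_{\mathrm{mix}}u_j}_{L^2(\Omega)}\le C\,\sup_K\abs{g_j}\to0$. For the lower bound, $\norm{g_j}_{L^2(\Omega)}^2=\norm{g_j}_{L^2(\Omega_1)}^2-\norm{g_j}_{L^2(\Omega_2)}^2=1-\norm{g_j}_{L^2(\Omega_2)}^2\to1$, since $\norm{g_j}_{L^2(\Omega_2)}^2\le\abs{\Omega_2}\sup_{\ol{\Omega_2}}\abs{g_j}^2\to0$; and $\abs{u_j}=\abs{g_j}$ outside the set $\{\dist(\cdot,\ol{\Omega_2})<2\eps\}\Subset\Omega_1$, on which $g_j\to0$ uniformly, so $\liminf_j\norm{u_j}_{L^2(\Omega)}^2\ge\lim_j\norm{g_j}_{L^2(\Omega)}^2=1$. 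Hence $\norm{u_j}_{L^2(\Omega)}/\norm{\dbar_{\mathrm{mix}}u_j}_{L^2(\Omega)}\to\infty$, contradicting the bound from the previous paragraph, and $\dbar_{\mathrm{mix}}$ has no closed range.

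The one genuinely delicate point is the lower bound $\liminf_j\norm{u_j}_{L^2(\Omega)}>0$: an orthonormal sequence in $A^2(\Omega_1)$ could in principle concentrate all of its mass inside the hole, but the relative compactness $\ol{\Omega_2}\Subset\Omega_1$ together with the interior estimate forces $\norm{g_j}_{L^2(\Omega_2)}\to0$, which is precisely where the annulus geometry enters. I would also note that pseudoconvexity of the envelope and Lipschitz regularity of the hole are barely used here — only $\ol{\Omega_2}\Subset\Omega_1$ and $\Omega_2\ne\emptyset$ are needed — so the conclusion holds a fortiori under the stated hypotheses.
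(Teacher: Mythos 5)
Your proof is correct, but it takes a genuinely different route from the paper's. The paper argues cohomologically: it sets up the exact sequence $0\to\mathcal{O}L^2_p(\Omega_1)\xrightarrow{R}\mathcal{O}W^1_p(\Omega_2)\xrightarrow{\ell}H^{p,1}_{\rm mix}(\Omega)\to0$, shows that the restriction map $R$ is compact and injective and hence (having infinite-dimensional image containing the polynomials) cannot have closed range, and concludes that $H^{p,1}_{\rm mix}(\Omega)\cong\mathcal{O}W^1_p(\Omega_2)/R(\mathcal{O}L^2_p(\Omega_1))$ is non-Hausdorff. You instead observe that $\dbar_{\rm mix}$ is injective in bidegree $(p,0)$ (by extension by zero and the identity principle --- a step you genuinely need, since closed range only yields the estimate on the orthogonal complement of the kernel) and then directly exhibit, from an orthonormal sequence in the Bergman space cut off away from the hole, a sequence violating any estimate $\norm{u}\le c\norm{\dbar_{\rm mix}u}$. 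The two arguments run on the same engine --- compactness of restriction of $A^2(\Omega_1)$ to a neighborhood of $\ol{\Omega_2}$, which in your version appears as $\sup_{\ol{\Omega_2}}\abs{g_j}\to0$ for an orthonormal sequence --- but yours is more elementary and self-contained: it avoids the exactness verifications (in particular the surjectivity of $\ell$, which is where the paper invokes pseudoconvexity of the envelope through the canonical solution on $\Omega_1$), and, as you note, it uses only that $\Omega_1$ is bounded and connected and that $\emptyset\neq\Omega_2\Subset\Omega_1$. What the paper's approach buys in exchange is the structural identification of $H^{p,1}_{\rm mix}(\Omega)$ with the quotient $\mathcal{O}W^1_p(\Omega_2)/R(\mathcal{O}L^2_p(\Omega_1))$, which is what supports the subsequent remark that the range of $\dbar_{\rm mix}$ is dense in the $\dbar$-closed forms when the hole is pseudoconvex.
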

\begin{rem}
If the hole $\Omega_2$ is also pseudoconvex, one can further show that the range of the operator $\dbar_{\rm mix}$ is dense in the space $\ker (\dbar)\cap L^2_{0,1}(\Omega_2)$ of $\dbar$-closed forms, using an Oka-Weil type approximation theorem.
\end{rem}

\begin{proof}We use the idea of the proof of   \cite[Theorem~2.4]{LiSh13}. Consider the cohomology  vector space defined by the $\dbar_{\rm mix}$ operator:
\[ H^{p,1}_{\rm mix}(\Omega) =
\frac{\ker (\dbar_{\rm mix})\cap L^2_{p,1}(\Omega)}{ \mathrm{range}(\dbar_{\rm mix})\cap L^2_{p,1}(\Omega)}, \]
which can be given a natural quotient topology. This quotient topology is Hausdorff if and only if the range of
$\dbar_{\rm mix}$ in $L^2_{p,1}(\Omega)$ is closed.  Therefore, to prove the result it suffices to show that $ H^{p,1}_{\rm mix}(\Omega)$ is not Hausdorff.

As usual, let us denote the envelope by $\Omega_1$ and the hole by $\Omega_2$. Let
\[\mathcal{O}L^2_p(\Omega_1) = L^2_{p,0}(\Omega_1)\cap \ker \dbar\]
be the space of square integrable holomorphic $p$-forms on  the envelope $\Omega_1$. For $p=0$ this is just the Bergman space of $\Omega_1$.

 Let $\mathcal{O}W^1_p(\Omega_2)$ be the space
\begin{equation}\label{eq-holosobolev}
 \mathcal{O}W^1_p(\Omega_2)= W^1_{p,0}(\Omega_2)\cap \ker \dbar,\end{equation}
consisting of holomorphic $p$-forms on the hole $\Omega_2$
with coefficients which belong to the Sobolev space
$W^1(\Omega_2)$.  Notice that both $\mathcal{O}L^2_p(\Omega_1)$ and $ \mathcal{O}W^1_p(\Omega_2)$ are Hilbert spaces.
We consider the following sequence of  topological vector spaces and continuous linear maps
\begin{equation}\label{eq-exact}0 \rightarrow \mathcal{O}L^2_p(\Omega_1)\xrightarrow{R} \mathcal{O}W^1_p(\Omega_2)\xrightarrow{\ell} H^{p,1}_{\rm mix}(\Omega)\rightarrow 0,\end{equation}
where $R$ is the restriction map  $R:\mathcal{O}L^2_p(\Omega_1)\to \mathcal{O}W^1_p(\Omega_2)$ given by
$f\mapsto f|_{\Omega_2}$, and the map $\ell:  \mathcal{O}W^1_p(\Omega_2)\to H^{p,1}_{\rm mix}(\Omega) $  is defined in the following way. Since $\Omega_2$ has Lipschitz boundary, Lemma \ref{lem:extension_operator} will apply. Define $\ell$ by setting
\[ \ell(f) = \left[(\dbar Ef)|_{\Omega}\right].\]
Then $\ell:\mathcal{O}W^1_p(\Omega_2)\to H^{p,1}_{\rm mix}(\Omega) $ is a continuous linear map between topological vector spaces, where the quotient topology on $ H^{p,1}_{\rm mix}(\Omega) $  need not be Hausdorff.  We claim that the map $\ell$ is defined independently of the extension
operator $E$. Indeed, if $\wt{f}$ is any extension of $f$ as a function in $W^1(\Omega_1)$, and $\wt{\gamma}= \left[(\dbar \wt{f})|_{\Omega}\right]\in H^{p,1}_{\rm mix}(\Omega)$, then $\ell(f)- \wt{\gamma} = \left[(\dbar (Ef- \wt{f}))|_{\Omega}\right]= \left[\dbar\left( (Ef-\wt{f})|_{\Omega}\right)\right]=0$, since $(Ef-\wt{f})|_{\Omega}\in \dom(\dbar_{\rm mix})$.

We now claim that \eqref{eq-exact} is an exact sequence of topological vector spaces and
continuous linear map, and that the image of $R$ in $\mathcal{O}W^1_p(\Omega_2)$ is not closed. Then we have a linear homeomorphism of topological vector spaces (see \cite{cassa71})
\[ H^{p,1}_{\rm mix}(\Omega) \cong\frac{\mathcal{O}W^1_p(\Omega_2)}{ R(\mathcal{O}L^2_p(\Omega_1))}.\]
Since the image of $R$ is not closed, the quotient on the right hand side is not Hausdorff, and
therefore $H^{p,1}_{\rm mix}(\Omega)$ is also not Hausdorff.

Since $\Omega_1$ is connected, it follows by the identity principle that $R$ is injective, and
therefore the sequence is exact at $\mathcal{O}L^2_p(\Omega_1)$. We now show that  $R:\mathcal{O}L^2_p(\Omega_1)\to \mathcal{O}W^1_p(\Omega_2)$ does not have closed range. To see this first note that $R$ is compact: by the Bergman inequality,  a sequence of functions uniformly bounded in $\mathcal{O}L^2_p(\Omega_1)$  has a uniform sup norm bound on an open set $D$ such that $ \Omega_2 \Subset D \subset \Omega_1$. Therefore, by Montel's theorem, there is a subsequence converging in the $\mathcal{C}^1$ topology on $\ol{\Omega_2}$ which means it converges also in $W^1$. Now if
$R$ has closed range, then by the open mapping theorem, $R$ is an open map from $\mathcal{O}L^2_p(\Omega_1)$ onto $\mathcal{O}L^2_p(\Omega_1)|_{\Omega_2}$. From the compactness of $R$, the unit ball of $R(\mathcal{O}L^2_p(\Omega_1))\subset \mathcal{O}W^1_p(\Omega_2)$ with respect to the $W^1$ norm is relatively compact, which implies that $R(\mathcal{O}L^2_p(\Omega_1))$ is finite dimensional. But since the holomorphic polynomials belong to $R(\mathcal{O}L^2_p(\Omega_1))$, this is absurd, and $R$ does not have closed range.

We now claim that
$ \ker (\ell)= R(\mathcal{O}L^2_p(\Omega_1)),$
which shows exactness at $\mathcal{O}W^1_p(\Omega_2)$. If $f\in \ker(\ell)$, there is an extension
$\wt{f}\in W^1(\Omega_1)$. By hypothesis, on $\Omega$ there is a $u\in \dom(\dbar_{\rm mix})\cap L^2(\Omega)$ such that  $\dbar \wt{f}=\dbar u$ on $\Omega$. Let $\wt{u}$ be the extension by 0 of $u$ to $\Omega_1$, i.e $\wt{u}=u$ on $\Omega$ and $\wt{u}=0$ in
$\ol{\Omega_1}$. Then the function $F=\wt{f}-\wt{u}$ is a holomorphic extension of $f$ to
$\Omega_1$ and $F\in \mathcal{O}L^2_p(\Omega_1)$ so that $R(F)=f$. On the other hand, if $f\in R(\mathcal{O}L^2_p(\Omega_1))$, there is a $F\in \mathcal{O}L^2_p(\Omega_1)$ so that $R(F)=f$. Then $\ell(f) =[ \dbar F|_{\Omega}]=0.$  The claim follows.


To complete the proof,  we now claim that $\ell$ is surjective, so that the sequence is exact at $H^{p,1}_{\rm mix}(\Omega)$. Indeed, given a class $\gamma\in  H^{p,1}_{\rm mix}(\Omega)$ represented by $g\in \dom(\dbar_{\rm mix})\cap L^2_{0,1}(\Omega)$ with $\dbar g=0$, we can extend $g$ by 0 on $\ol{\Omega_2}$ to obtain a $\wt{g}\in L^2_{(p,1)}(\Omega_1)$ such that $\dbar \wt{g}=0$. Let $v$ be the canonical solution of $\dbar v=g$,
and let $f=v|_{\Omega_2}$. Then clearly $f\in \mathcal{O}W^1_p(\Omega_2)$, and $\ell(f)= \left[(\dbar Ef)|_{\Omega}\right]= \left[(\dbar v)|_{\Omega}\right]= \gamma.$

\end{proof}

\section{\texorpdfstring{$L^2$}{L2}-estimates on annuli}
In this section we give proofs of Theorems~\ref{thm:q_n-q}, \ref{thm:q-convex} and \ref{thm-infinitedim}. One of our main tools will be duality arguments in the $L^2$ setting (cf. \cite{ChSh12}). For an annulus $\Omega$, as usual we let $\Omega_1$ denote its envelope and
$\Omega_2$ denote its hole.
\subsection{Closed Range: Proof of Theorem~\ref{thm:q_n-q}}
 We will  need the following easy fact (cf. \cite[Lemma~3]{ChSh12}):
\begin{lem}\label{lem:cs12lem3}
On a domain $D$,  for some $p,q$, the operator
\[ \dbar: L^2_{p,q-1}(D)\to L^2_{p,q}(D)\]
has closed range if and only if the strong minimal realization
\[ \dbar_c: L^2_{n-p,n-q}(D)\to L^2_{n-p,n-q+1}(D)\]
has closed range. Further,  if these operators have closed range, the constants in both closed
range estimates are the same. More precisely: if $C$ is a constant   such that  for each $u\in \dom(\dbar)\cap (\ker \dbar)^\perp\cap L^2_{p,q-1}(D)$ we have
\begin{equation}\label{eq:closedrangeestdbar}
\norm{u}_{L^2(D)} \leq C \norm{\dbar u}_{L^2(D)},
\end{equation}
 which is equivalent to closed range of $\dbar$  by  \cite[Theorem 1.1.1]{Hor65},
 then we also have for each $v \in\dom\dbar_c\cap(\ker\dbar_c)^\bot\cap L^2_{n-p,n-q}(\Omega) $ that
\begin{equation}\label{eq:closedrangeestdbarc}
\norm{v}_{L^2(D)} \leq C \norm{\dbar_c v}_{L^2(D)},
\end{equation}
where the constants are the same in \eqref{eq:closedrangeestdbar} and \eqref{eq:closedrangeestdbarc}
\end{lem}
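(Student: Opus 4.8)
The plan is to deduce the statement from the familiar duality for the pair $(\dbar,\dbar^{*})$ by transporting everything through the unweighted Hodge-star operator $\star=\star_0$ of \eqref{eq-hodge}; this is essentially \cite[Lemma~3]{ChSh12}, and the only additional point is that the constant is preserved along the way.

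First I would record the algebra of realizations. Let $\vartheta$ be the formal adjoint of $\dbar$, so that on smooth forms $\vartheta=-\star\dbar\star$ (this is \cite[Lemma~2]{ChSh12} with trivial weight). Write $\dbar_{\max}$ for the maximal realization (the operator denoted $\dbar$ in the paper), $\dbar_c$ for the minimal realization of Definition~\ref{defn:strong_minimal}, and $\vartheta_{\max},\vartheta_{\min}$ analogously. Integrating by parts against compactly supported smooth forms, one checks directly from the definitions -- with \emph{no} boundary regularity needed -- that $(\dbar_c)^{*}=\vartheta_{\max}$ and $(\vartheta_{\min})^{*}=\dbar_{\max}$, hence also, upon taking one more adjoint, $(\dbar_{\max})^{*}=\vartheta_{\min}$ and $(\vartheta_{\max})^{*}=\dbar_c$. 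Since $\star$ is a norm-preserving isomorphism of $L^2_{n-p,n-q}(D)$ onto $L^2_{p,q}(D)$ by \eqref{eq:hodge_star2} with $\phi=0$, maps compactly supported smooth $(n-p,n-q)$-forms bijectively onto compactly supported smooth $(p,q)$-forms, and conjugates $\dbar$ into $\vartheta$ up to a unimodular factor on smooth forms, it carries the graph of $\dbar_c\colon L^2_{n-p,n-q}(D)\to L^2_{n-p,n-q+1}(D)$ onto the graph of $\vartheta_{\min}\colon L^2_{p,q}(D)\to L^2_{p,q-1}(D)$, i.e.\ onto the graph of $\dbar^{*}$. Concretely, $v\in\dom\dbar_c$ if and only if $\star v\in\dom\dbar^{*}$, with $\dbar^{*}(\star v)=\pm\star(\dbar_c v)$, and $\star$ maps $\ker\dbar_c$ onto $\ker\dbar^{*}$ and $(\ker\dbar_c)^{\perp}$ onto $(\ker\dbar^{*})^{\perp}$.

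Next I would invoke the elementary fact -- contained in \cite[Theorem~1.1.1]{Hor65}, or phrased via the reduced minimum modulus -- that a closed densely defined operator $T$ between Hilbert spaces has closed range if and only if $T^{*}$ does, and that the best constant $C$ in $\norm{x}\le C\norm{Tx}$ for $x\in\dom T\cap(\ker T)^{\perp}$ equals the best constant in $\norm{y}\le C\norm{T^{*}y}$ for $y\in\dom T^{*}\cap(\ker T^{*})^{\perp}$. Applied to $T=\dbar\colon L^2_{p,q-1}(D)\to L^2_{p,q}(D)$, this turns \eqref{eq:closedrangeestdbar} with constant $C$ into the same closed-range estimate for $\dbar^{*}\colon L^2_{p,q}(D)\to L^2_{p,q-1}(D)$ with the same $C$; substituting $y=\star v$ and using that $\star$ is norm-preserving, matches kernels and their orthogonal complements, and intertwines $\dbar^{*}$ with $\dbar_c$ up to a unimodular factor, we obtain \eqref{eq:closedrangeestdbarc} with the same $C$ for every $v\in\dom\dbar_c\cap(\ker\dbar_c)^{\perp}\cap L^2_{n-p,n-q}(D)$. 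Reversing the chain of equivalences gives the converse.

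The step I expect to be delicate is the bookkeeping in the second paragraph: one must be sure that $\star$ sends the graph of the \emph{minimal} operator $\dbar_c$ precisely onto the graph of the Hilbert adjoint $\dbar^{*}=\vartheta_{\min}$, and not onto some intermediate closed extension -- which is exactly what the identities $(\dbar_c)^{*}=\vartheta_{\max}$ and $(\dbar_{\max})^{*}=\vartheta_{\min}$, valid on any bounded domain, provide. Once those are in hand the remainder is routine, and the preservation of the constant is immediate since $\star$ is an isometry and adjunction preserves the reduced minimum modulus.
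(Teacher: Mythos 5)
Your proposal is correct and follows essentially the same route as the paper: the paper's proof simply cites the representation $\dbar^{*}=-\star\,\dbar_c\,\star$ from \cite[Lemma~3]{ChSh12} together with \cite[Theorem~1.1.1]{Hor65} (closed range passes to the adjoint with the same best constant), and your argument supplies exactly the content of that cited lemma via the adjoint relations between the minimal and maximal realizations and the isometry of the Hodge star. The extra care you take that $\star$ maps the graph of $\dbar_c$ onto the graph of $\vartheta_{\min}=\dbar^{*}$, and not some intermediate closed extension, is the right point to worry about and is handled correctly.
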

\begin{proof} This is essentially \cite[Lemma~3]{ChSh12}, which follows from the representation  $\dbar^*= -* \cdot\dbar_c\cdot * $, along with  \cite[Theorem 1.1.1]{Hor65},
which states that the range an operator is closed if and only of the range of its adjoint is closed, and the best constants in the estimates corresponding to the closed range (i.e. \eqref{eq:closedrangeestdbar}  and \eqref{eq:closedrangeestdbarc} here) are the same.
\end{proof}
\begin{lem}\label{lem-qconvex2} Let $D\Subset \cx^n$ be a bounded weakly  $q$-convex domain, where $1\leq q \leq n$ and let $\delta =\sup_{z,z'\in D}\abs{z-z'}$ be the diameter of $D$. Let $0\leq p \leq n$.  Then the minimal realization
\[ \dbar_c: L^2_{p, n-q}(D)\to L^2_{p, n-q+1}(D)\]
has closed range, and consequently the cohomology with minimal realization  $H^{p, n-q+1}_{c, L^2}(D)$ is Hausdorff. Further, the constant in the closed range estimate is the same as in \eqref{eq-poincare}, i.e., whenever $f\in {\rm range}(\dbar_c)\cap  L^2_{p, n-q+1}(D)$,
 there is a solution $v\in \dom(\dbar_c)\cap L^2_{p, n-q}(D) $ of $\dbar_cv =f$ such that
 \begin{equation}\label{eq-v1}
 \norm{v}_{L^2(D)} \leq \sqrt{\frac{e}{q}}\cdot \delta \cdot \norm{f}_{L^2(D)}.
 \end{equation}
 Further, the coefficients of the solution $v$ lie locally in the Sobolev space $W^1$, and for each relatively compact open subset $D_2 \Subset D$ there is
 a constant $K(D,D_2)$ such that
 \begin{equation}\label{eq-v2}
 \norm{v}_{W^1(D_2)} \leq K(D, D_2) \cdot \norm{u}_{L^2(D)}.
 \end{equation}
\end{lem}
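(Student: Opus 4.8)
The plan is to derive the whole statement from Lemma~\ref{lem-qconvex} by the duality of Lemma~\ref{lem:cs12lem3}, so that no new estimate has to be proved. Since $D$ is weakly $q$-convex and $0\le n-p\le n$, Lemma~\ref{lem-qconvex} applied in bidegree $(n-p,q)$ tells us that every $\dbar$-closed $g\in L^2_{n-p,q}(D)$ admits a solution of $\dbar u=g$ with $\norm{u}_{L^2(D)}\le\sqrt{e/q}\cdot\delta\cdot\norm{g}_{L^2(D)}$; in particular the canonical solution (the one orthogonal to $\ker\dbar$) satisfies this bound, which is exactly the closed-range estimate \eqref{eq:closedrangeestdbar} for $\dbar\colon L^2_{n-p,q-1}(D)\to L^2_{n-p,q}(D)$ with $C=\sqrt{e/q}\cdot\delta$. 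Now apply Lemma~\ref{lem:cs12lem3} with its ``$p$'' taken to be $n-p$ and its ``$q$'' taken to be $q$: this says precisely that $\dbar_c\colon L^2_{p,n-q}(D)\to L^2_{p,n-q+1}(D)$ has closed range and that the estimate \eqref{eq:closedrangeestdbarc} holds with the same constant $C=\sqrt{e/q}\cdot\delta$. Closed range of $\dbar_c$ in this bidegree is equivalent to Hausdorffness of $H^{p,n-q+1}_{c,L^2}(D)$, which gives the second assertion.

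For the quantitative statement, fix $f\in\range(\dbar_c)\cap L^2_{p,n-q+1}(D)$ and let $v$ be the $L^2$-minimal solution of $\dbar_c v=f$, that is, the unique solution orthogonal to $\ker\dbar_c$ (such a $v$ exists because $\dbar_c$ has closed range). Then $v\in\dom(\dbar_c)\cap(\ker\dbar_c)^\perp\cap L^2_{p,n-q}(D)$, so \eqref{eq:closedrangeestdbarc} with the constant found above gives
\[
\norm{v}_{L^2(D)}\le\sqrt{\tfrac{e}{q}}\cdot\delta\cdot\norm{\dbar_c v}_{L^2(D)}=\sqrt{\tfrac{e}{q}}\cdot\delta\cdot\norm{f}_{L^2(D)},
\]
which is \eqref{eq-v1}.

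It remains to establish the interior $W^1$-estimate for this $v$. The key observation is that, because $v$ is orthogonal to $\ker\dbar_c$ and, for every smooth compactly supported form $\eta$ of bidegree $(p,n-q-1)$, the form $\dbar\eta$ is smooth, compactly supported and $\dbar$-closed (hence lies in $\ker\dbar_c$), we have $(v,\dbar\eta)=0$ for all such $\eta$; equivalently $\vartheta v=0$ in the sense of distributions on $D$, where $\vartheta$ denotes the formal $L^2$-adjoint of $\dbar$ (a first-order constant-coefficient operator for the Euclidean metric). Combining this with $\dbar v=\dbar_c v=f$ (valid as distributional forms), we obtain $\square v:=(\dbar\vartheta+\vartheta\dbar)v=\vartheta f$ in the interior of $D$, where $\square$ is the complex Laplacian, which for the Euclidean metric acts on the coefficients as a constant multiple of the ordinary Laplacian and is therefore elliptic. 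Since $\vartheta f\in W^{-1}_{\loc}(D)$, interior elliptic regularity of the complex Laplacian gives $v\in W^1_{\loc}(D)$ together with, for any $D_2\Subset D_1\Subset D$,
\[
\norm{v}_{W^1(D_2)}\le C(D_1,D_2)\bigl(\norm{v}_{L^2(D_1)}+\norm{\vartheta f}_{W^{-1}(D_1)}\bigr)\le C(D_1,D_2)\bigl(\norm{v}_{L^2(D)}+\norm{f}_{L^2(D)}\bigr),
\]
which, together with \eqref{eq-v1}, yields \eqref{eq-v2}. The only points that need a little care are the index bookkeeping in the invocation of Lemma~\ref{lem:cs12lem3} (so that the constant is genuinely the one in \eqref{eq-poincare}) and the verification that the minimal solution inherits, in the interior, both the equation $\dbar v=f$ and the co-closedness $\vartheta v=0$ that drive the elliptic regularity argument; neither presents a real obstacle.
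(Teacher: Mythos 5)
Your argument is correct. For the closed-range statement and the constant it coincides with the paper's: both deduce the estimate \eqref{eq:closedrangeestdbarc} for $\dbar_c\colon L^2_{p,n-q}(D)\to L^2_{p,n-q+1}(D)$ from the solvability bound of Lemma~\ref{lem-qconvex} in bidegree $(n-p,q)$ via the duality of Lemma~\ref{lem:cs12lem3}, and your index bookkeeping is right. Where you diverge is in the choice of solution and the source of the interior $W^1$ bound. The paper takes the explicit solution $v=-\star N_{n-p,q}\dbar\star f$ (following \cite[Theorem~9.1.2]{ChSh01}) and quotes interior regularity of the $\dbar$-Neumann operator $N_{n-p,q}$; you instead take the $L^2$-minimal solution and observe that orthogonality to $\ker\dbar_c$ forces $\vartheta v=0$ distributionally on the interior (because $\dbar\eta\in\ker\dbar_c$ for every smooth compactly supported test form $\eta$), so that $\square v=\vartheta f\in W^{-1}_{\loc}(D)$ and interior elliptic regularity of the diagonal, constant-coefficient complex Laplacian yields $v\in W^1_{\loc}$ with the stated bound. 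The two solutions are essentially the same object under the identification $\dbar^{*}=-\star\,\dbar_c\,\star$, but your derivation is somewhat more self-contained: it does not require asserting the existence and interior regularity of $N_{n-p,q}$ on a merely weakly $q$-convex domain, replacing that citation by a direct ellipticity argument for the minimal solution. The only degenerate case worth a sentence is $q=n$, where $v$ is a $(p,0)$-form and the co-closedness condition is vacuous; there $\square v=\vartheta\dbar v=\vartheta f$ holds directly, so nothing breaks.
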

\begin{proof} This is a generalization of \cite[Theorem~9.1.2]{ChSh01} and the proof is the same. In fact, we can take $v= -\star N_{n-p, q}\dbar \star$, where $N_{n-p, q}$ is the $\dbar$-Neumann operator in degree $(n-p,q)$ which is easily seen to exist
 on the $q$-convex domain $D$. Then \eqref{eq-v1} follows, and the constant follows from Lemma~\ref{lem:cs12lem3} above. Finally, \eqref{eq-v2} follows from the interior regularity of
 the $\dbar$-Neumann operator (see \cite{ChSh01}).
 \end{proof}
\begin{proof}[Proof of Theorem~\ref{thm:q_n-q}]
By   Lemma~\ref{lem:cs12lem3}, it suffices to prove that for every $u\in L^2_{n-p,n-q}(\Omega)\cap\dom\dbar_c^\Omega\cap(\ker\dbar_c^\Omega)^\bot$, we have
\begin{equation}
\label{eq:dbar*_closed_range_annulus}
  \norm{u}_{L^2(\Omega)}\leq C_q(\Omega,L^2)\norm{\dbar_c^\Omega u}_{L^2(\Omega)}.
\end{equation}
Let $u\in L^2_{n-p,n-q}(\Omega)\cap\dom\dbar_c^\Omega\cap(\ker\dbar_c^\Omega)^\bot$, and  define $f\in L^2_{n-p,n-q+1}(\Omega_1)$ by setting $f=\dbar_c^\Omega u$ on $\Omega$ and $f=0$ on $\overline\Omega_2$.  By Lemma \ref{lem:extension_str_min}, $f\in\range\dbar_c^{\Omega_1}$, since we may also extend $u$ to be zero on $\overline\Omega_2$. Therefore, by Lemma~\ref{lem-qconvex2} above there is {\em another} solution $v\in L^2_{n-p,n-q}(\Omega_1)\cap\dom(\dbar_c^{\Omega_1})$  of the equation  $\dbar_c^{\Omega_1}v=f$ such that we have properties \eqref{eq-v1} and \eqref{eq-v2}, i.e.,
\[
    \norm{v}_{L^2(\Omega_1)}\leq \sqrt{\frac{e\delta^2}{q}}\norm{f}_{L^2(\Omega_1)},
  \]
  and
  \[
      \norm{v}_{W^1(\Omega_2)}\leq K(\Omega_1,\Omega_2)\norm{f}_{L^2(\Omega_1)}.
  \]
On $\Omega_2$, we have $\dbar v=0$.  We know that $n-1\geq n-q\geq 1$, so we can apply Theorem \ref{thm:W^1_solvability} to each connected component of $\Omega_2$ and see that there exists $g\in W^1_{n-p,n-q-1}(\Omega_2)$ such that $\dbar g=v$ on $\Omega_2$ and
  \[
    \norm{g}_{W^1(\Omega_2)}\leq C\norm{v}_{W^1(\Omega_2)}.
  \]
  Define $\tilde g\in W^1_{n-p,n-q-1}(\Omega_1)$ by setting $\tilde g=E g$, where $E$ is given by Lemma \ref{lem:extension_operator}.  Hence
  \[
    \norm{\tilde g}_{W^1(\Omega_1)}\leq E(\Omega_1,\Omega_2)\norm{g}_{W^1(\Omega_2)}.
  \]
  Then $\tilde g\in\dom\dbar_c^{\Omega_1}$ and $\dbar_c^{\Omega_1}\tilde g|_{\Omega_2}=v$.  Hence, $v-\dbar_c^{\Omega_1}\tilde g\in\dom\dbar_c^{\Omega}$ and $\dbar_c^{\Omega}(v-\dbar_c^{\Omega_1}\tilde g)=f|_{\Omega}$.  Since $u$ is the $L^2$ minimal solution to $\dbar_c^\Omega u=f|_{\Omega}$, we must have
  \begin{align*}
    \norm{u}_{L^2(\Omega)}&\leq\norm{v-\dbar_c^{\Omega_1}\tilde g}_{L^2(\Omega)}\\
    &\leq\norm{v}_{L^2(\Omega)}+\norm{\tilde g}_{W^1(\Omega)}\\
    &\leq\norm{v}_{L^2(\Omega)}+E(\Omega_1,\Omega_2)\norm{g}_{W^1(\Omega_2)}\\
    &\leq\norm{v}_{L^2(\Omega)}+E(\Omega_1,\Omega_2)C_{n-q-1}(\Omega_2,W^1)\norm{\tilde v}_{W^1(\Omega_2)}\\
    &=\norm{v}_{L^2(\Omega)}+E(\Omega_1,\Omega_2)C_{n-q-1}(\Omega_2,W^1)\norm{v}_{W^1(\Omega_2)}\\
    &\leq\left(\sqrt{\frac{e}{q}}\cdot \delta+E(\Omega_1,\Omega_2)C_{n-q-1}(\Omega_2,W^1)K(\Omega_1,\Omega_2)\right)\norm{\dbar_c^\Omega u}_{L^2(\Omega)},
  \end{align*}
  from which \eqref{eq:dbar*_closed_range_annulus} follows, where
  \begin{equation}
C_q(\Omega, L^2)= \sqrt{\frac{e}{q}}\cdot \delta+E(\Omega_1,\Omega_2)C_{n-q-1}(\Omega_2,W^1)K(\Omega_1,\Omega_2).
\label{eq-cql2}
\end{equation}
\end{proof}
  \subsection{Vanishing of Cohomology: Proof of Theorem~\ref{thm:q-convex}}

We begin with an adaptation of Lemma \ref{lem-qconvex2} to the degree in which we obtain a solution operator for $\dbar_c$:
\begin{lem}\label{lem-qconvex3} Let $D\Subset \cx^n$ be a bounded weakly  $q$-convex domain, where $1\leq q \leq n$ and let $\delta =\sup_{z,z'\in D}\abs{z-z'}$ be the diameter of $D$. Let $0\leq p \leq n$.  Then whenever $f\in L^2_{n-p, n-q}(D)\cap\dom\dbar_c^D$ satisfies $\dbar_c^D f=0$,
 there is a solution $v\in \dom(\dbar_c^D)\cap L^2_{n-p, n-q-1}(D) $ of $\dbar_c^D v =f$ such that
 \begin{equation}\label{eq-v3}
 \norm{v}_{L^2(D)} \leq \sqrt{\frac{e}{q}}\cdot \delta \cdot \norm{f}_{L^2(D)}.
 \end{equation}
 Further, the coefficients of the solution $v$ lie locally in the Sobolev space $W^1$, and for each relatively compact open subset $D_2 \Subset D$ there is
 a constant $K(D,D_2)$ such that
 \begin{equation}\label{eq-v4}
 \norm{v}_{W^1(D_2)} \leq K(D, D_2) \cdot \norm{u}_{L^2(D)}.
 \end{equation}
\end{lem}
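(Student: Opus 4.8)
The plan is to imitate the proof of Lemma~\ref{lem-qconvex2}, shifted down one step in the antiholomorphic degree, producing the solution by an explicit $\dbar$-Neumann formula. Since $D$ is weakly $q$-convex, the basic estimate of $L^2$-theory holds on $(p,q)$-forms on $D$ --- this is exactly what makes the proof of Lemma~\ref{lem-qconvex} work; cf.\ \cite[Theorem~3.1]{Ho91} --- and $H^{p,q}_{L^2}(D)=0$ by Lemma~\ref{lem-qconvex}; hence the $\dbar$-Neumann operator $N:=N_{p,q}$ on $(p,q)$-forms exists as a bounded operator inverting the complex Laplacian $\dbar\dbar^{*}+\dbar^{*}\dbar$ on $L^2_{p,q}(D)$. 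I would then take
\[
  v \;=\; \pm\,\star\bigl(\dbar N(\star f)\bigr),
\]
the analogue of the operator $-\star N_{n-p,q}\dbar\star$ used in Lemma~\ref{lem-qconvex2}; here $\star$ is the unweighted Hodge star, an $L^2$-isometry with $\star\colon L^2_{n-p,n-q}(D)\to L^2_{p,q}(D)$ and $\star\colon L^2_{p,q+1}(D)\to L^2_{n-p,n-q-1}(D)$, so that $v\in L^2_{n-p,n-q-1}(D)$.

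The main computation is to verify $\dbar_c^D v=f$. Write $g=\star f\in L^2_{p,q}(D)$; by the identity $\dbar^{*}=-\star\,\dbar_c\,\star$ of Lemma~\ref{lem:cs12lem3} the hypothesis $\dbar_c^D f=0$ becomes $\dbar^{*}g=0$. Applying $\dbar^{*}$ to the Hodge decomposition $g=\dbar\dbar^{*}Ng+\dbar^{*}\dbar Ng$ gives $\dbar^{*}\dbar\,\dbar^{*}Ng=0$, whence $\norm{\dbar\,\dbar^{*}Ng}^2=\gipr{\dbar^{*}\dbar\,\dbar^{*}Ng,\dbar^{*}Ng}=0$ (the integration by parts being valid since $Ng$ lies in the domain of the complex Laplacian and $g\in\ker\dbar^{*}\subset\dom\dbar^{*}$); therefore $g=\dbar^{*}(\dbar Ng)$. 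So $w:=\dbar Ng$ lies in $\dom\dbar^{*}$ and solves $\dbar^{*}w=g$, and transporting back through $\star$ --- which interchanges $\dom\dbar^{*}$ on $(p,q+1)$-forms with $\dom\dbar_c^D$ on $(n-p,n-q-1)$-forms --- shows $v=\pm\star w\in\dom\dbar_c^D$ and, with the correct sign, $\dbar_c^D v=f$. For \eqref{eq-v3} I would use that $\star$ is an isometry to write $\norm{v}_{L^2}^2=\norm{\dbar Ng}_{L^2}^2\le\norm{\dbar Ng}_{L^2}^2+\norm{\dbar^{*}Ng}_{L^2}^2=\gipr{g,Ng}$, and then bound $\gipr{g,Ng}\le\tfrac{e}{q}\delta^2\norm{g}_{L^2}^2=\tfrac{e}{q}\delta^2\norm{f}_{L^2}^2$ by tracking the constant through Lemma~\ref{lem:cs12lem3} and Lemma~\ref{lem-qconvex} exactly as in Lemma~\ref{lem-qconvex2} (equivalently, $\norm{N_{p,q}}\le\tfrac{e}{q}\delta^2$). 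Estimate \eqref{eq-v4} is then immediate from interior ellipticity of the complex Laplacian: $Ng$, and hence $\dbar Ng$, is locally in $W^1$ with a bound $K(D,D_2)\norm{g}_{L^2(D)}$ over each $D_2\Subset D$, and $\star$ is a constant-coefficient bundle map.

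What is genuinely new compared with Lemma~\ref{lem-qconvex2} is that we assert outright \emph{vanishing} --- every $\dbar_c^D$-closed $f$ is solvable, i.e.\ $H^{n-p,n-q}_{c,L^2}(D)$ is not merely Hausdorff but zero --- and this is exactly where the hypothesis $\dbar^{*}g=0$ was used above: under $\star$ that cohomology is identified with the harmonic space $\ker\dbar\cap\ker\dbar^{*}$ on $(p,q)$-forms, which vanishes because $H^{p,q}_{L^2}(D)=0$. I expect the only real care needed to be the routine bookkeeping of Hodge-star signs, the degree shifts, and the $\dom\dbar_c$ versus $\dom\dbar^{*}$ membership conditions. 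An alternative route that avoids the explicit formula is to obtain closed range together with \eqref{eq-v3}--\eqref{eq-v4} directly from Lemma~\ref{lem-qconvex2} applied with $(p,q)$ replaced by $(n-p,q+1)$ --- legitimate because a weakly $q$-convex domain is automatically weakly $(q+1)$-convex --- after which only the vanishing remains to be supplied separately.
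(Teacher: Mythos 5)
Your proposal is correct and follows essentially the same route as the paper, which defines the solution by the very formula you write, $v=-\star\,\dbar N_{p,q}\,\star f$ with $N_{p,q}$ the $\dbar$-Neumann operator on $(p,q)$-forms (existing by weak $q$-convexity), and obtains \eqref{eq-v3} from the norm bound on $N_{p,q}$ and \eqref{eq-v4} from interior ellipticity. Your write-up simply supplies the details (the Hodge-decomposition argument showing $\dbar^{*}\dbar Ng=g$ when $\dbar^{*}g=0$, and the transport through $\star$ to $\dom\dbar_c$) that the paper delegates to \cite[Theorem~9.1.2]{ChSh01}.
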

\begin{proof} This is a generalization of \cite[Theorem~9.1.2]{ChSh01} and the proof is the same. In fact, we can take $v= -\star\dbar N_{p, q} \star$, where $N_{p, q}$ is the $\dbar$-Neumann operator in degree $(n-p,q)$ which is easily seen to exist
 on the $q$-convex domain $D$. Then \eqref{eq-v1} follows, and the constant follows from Lemma~\ref{lem:cs12lem3} above. Finally, \eqref{eq-v2} follows from the interior regularity of
 the $\dbar$-Neumann operator (see \cite{ChSh01}).
 \end{proof}

We next prove a solvability result for $\dbar_c$ on the annulus.  We will state this using the language of cohomology, so we define
\[ H^{p,q}_{c,L^2}(D)= \frac{\ker \dbar_c^D: L^2_{p,q}(D)\to L^2_{p,q+1}(D)}{{\rm range}\, \dbar_c^D: L^2_{p,q-1}(D)\to L^2_{p,q}(D) }\]

\begin{prop}\label{prop:q-convex}  Let   $\Omega$ be an annulus  in $\mathbb{C}^n$, $n\geq 2$,  such that for some $0\leq q\leq n-2$ the envelope is weakly $(q+1)$-convex and the
hole is weakly $(n-q-1)$-convex. Then $H^{n-p,n-q}_{c,L^2}(\Omega)=0$.
\end{prop}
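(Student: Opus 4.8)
The plan is to mirror the proof of Theorem~\ref{thm:q_n-q}, but now producing an honest $\dbar_c$-primitive rather than just a closed-range estimate. Let $f\in L^2_{n-p,n-q}(\Omega)\cap\dom\dbar_c^{\Omega}$ with $\dbar_c^{\Omega}f=0$. First I would extend $f$ by zero across $\overline{\Omega_2}$ to obtain $\tilde f$ on the envelope $\Omega_1$; approximating $f$ by forms compactly supported in $\Omega$ and extending each by zero shows, exactly as in the forward half of Lemma~\ref{lem:extension_str_min}, that $\tilde f\in\dom\dbar_c^{\Omega_1}$ with $\dbar_c^{\Omega_1}\tilde f=0$. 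The crucial numerology is that $\tilde f$ lives in bidegree $(n-p,n-q)=(n-p,n-(q+1)+1)$, which is precisely the bidegree into which Lemma~\ref{lem-qconvex2} (and Lemma~\ref{lem-qconvex3}) solve $\dbar_c$ on a weakly $(q+1)$-convex domain. So the assumption that $\Omega_1$ is weakly $(q+1)$-convex is exactly what is needed to produce $v\in L^2_{n-p,n-q-1}(\Omega_1)$ with $\dbar_c^{\Omega_1}v=\tilde f$ whose coefficients are locally in $W^1$; in particular $v\in W^1_{n-p,n-q-1}$ on a neighborhood of $\overline{\Omega_2}$.

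With $v$ at hand I would patch through the hole. On $\Omega_2$ we have $\dbar v=0$ (as $\tilde f\equiv 0$ there), and since $\Omega_2$ is weakly $(n-q-1)$-convex and $n-q-1$ is the ambient bidegree, Lemma~\ref{lem-qconvex} furnishes $g$ with $\dbar g=v$ on $\Omega_2$, again with coefficients locally in $W^1$. Multiplying $g$ by a cut-off supported in $\Omega_1$ and identically $1$ near $\overline{\Omega_2}$ produces a compactly supported $\tilde g$ on $\Omega_1$ with $\dbar_c^{\Omega_1}\tilde g=v$ near $\overline{\Omega_2}$; then $v-\dbar_c^{\Omega_1}\tilde g$ vanishes near $\overline{\Omega_2}$, so its restriction $u$ to $\Omega$ lies in $\dom\dbar_c^{\Omega}$ and satisfies $\dbar_c^{\Omega}u=\tilde f|_{\Omega}=f$. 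This says $[f]=0$ in $H^{n-p,n-q}_{c,L^2}(\Omega)$, proving the proposition.

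The step I expect to be the main obstacle is producing the primitive $v$ on the envelope. Weak $(q+1)$-convexity of $\Omega_1$, via Lemma~\ref{lem-qconvex3} together with the monotonicity of weak $q$-convexity in $q$, gives vanishing of $H^{n-p,\bullet}_{c,L^2}(\Omega_1)$ only through bidegree $n-q-1$ --- one step short of the bidegree $n-q$ of $\tilde f$ --- while Lemma~\ref{lem-qconvex2} at index $q+1$ gives only that $\dbar_c^{\Omega_1}$ has closed range there. So one must check that the \emph{particular} form $\tilde f$ is in the range: being the zero-extension of a $\dbar_c^{\Omega}$-closed form, its zero-extension to $\cx^n$ is $\dbar$-closed with compact support, and it is the annular structure (together with Serre-type duality and the flatness of $f$ along $b\Omega$, so that boundary terms in Stokes' theorem drop out) that should force $[\tilde f]=0$ in $H^{n-p,n-q}_{c,L^2}(\Omega_1)$; keeping this part from circularly re-using Theorem~\ref{thm:q-convex}, and carrying out the patching when $b\Omega_2$ carries no regularity hypothesis, are the delicate points. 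The extreme case $q=0$, where $n-q=n$ is the top bidegree, would be treated separately.
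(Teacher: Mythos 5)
Your overall architecture is the paper's: extend $f$ by zero to $\tilde f$ on $\Omega_1$, solve $\dbar_c^{\Omega_1}v=\tilde f$, kill $v$ on the hole by solving $\dbar g=v$ there, and subtract an extension of $g$. But the two steps you yourself flag as "delicate" are genuine gaps, not loose ends. First, the primitive on the envelope: you correctly compute that weak $(q+1)$-convexity of $\Omega_1$ gives, via Lemma~\ref{lem-qconvex3} and percolation, vanishing of $H^{n-p,\bullet}_{c,L^2}(\Omega_1)$ only up to bidegree $n-q-1$, while Lemma~\ref{lem-qconvex2} at index $q+1$ gives only closed range in bidegree $n-q$. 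Your proposed remedy --- that "the annular structure together with Serre-type duality and the flatness of $f$ along $b\Omega$" should force $[\tilde f]=0$ in $H^{n-p,n-q}_{c,L^2}(\Omega_1)$ --- is not an argument: by Serre duality that class pairs with $H^{p,q}_{L^2}(\Omega_1)$, which weak $(q+1)$-convexity does not kill, and nothing about $\tilde f$ being a zero-extension makes $\int_\Omega f\wedge h$ vanish for a general $\dbar$-closed $(p,q)$-form $h$. What the paper actually does is apply Lemma~\ref{lem-qconvex3} at index $q$ (the constant $\sqrt{e/q}\,\delta$ in its proof makes this explicit), i.e.\ it uses weak $q$-convexity of the envelope --- which is exactly what is available in the one place the Proposition is invoked, namely the proof of Theorem~\ref{thm:q-convex}. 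So the step you could not close is closed in the paper only by (implicitly) strengthening the hypothesis; under the literal hypothesis, and in particular for $q=0$, it is not established.

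Second, the patching through the hole is broken as written. Your $g$ comes from Lemma~\ref{lem-qconvex} and is defined only on $\Omega_2$ with coefficients only \emph{locally} in $W^1$; multiplying it by a cutoff that is identically $1$ near $\ol{\Omega_2}$ produces nothing defined outside $\Omega_2$, so there is no $\tilde g$ on $\Omega_1$ and no way to form $v-\dbar_c^{\Omega_1}\tilde g$. The paper instead solves $\dbar g=v$ on $\Omega_2$ by Theorem~\ref{thm:W^1_solvability}, which yields $g\in W^1$ up to $b\Omega_2$ (this is where the $\mathcal{C}^{1,1}$ regularity and weak $(n-q-1)$-convexity of the hole both enter), and then extends by the Sobolev extension operator of Lemma~\ref{lem:extension_operator}, which needs at least Lipschitz boundary; the membership $v-\dbar_c^{\Omega_1}\tilde g|_\Omega\in\dom\dbar_c^\Omega$ also uses boundary regularity of $\Omega_2$ as in Lemma~\ref{lem:extension_str_min}. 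So both of the points you deferred require hypotheses (weak $q$-convexity of the envelope; $\mathcal{C}^{1,1}$ regularity of the hole) that are present where the Proposition is used but absent from its bare statement, and your proposal supplies neither the hypotheses nor a substitute argument.
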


\begin{proof}
Let $f\in L^2_{n-p,n-q}(\Omega)\cap\dom\dbar_c^\Omega$ satisfy $\dbar_c^\Omega f=0$.  Since $f\in\dom\dbar_c^\Omega$, we may extend $f$ to $\tilde f\in L^2_{n-p,n-q}(\Omega)\cap\dom\dbar_c^{\Omega_1}$ by setting $\tilde f=f$ on $\Omega$ and $\tilde f=0$ on $\overline\Omega_2$.  By Lemma \ref{lem:extension_str_min}, $f\in\dom\dbar_c^{\Omega_1}$.  We may use Lemma~\ref{lem-qconvex3} to find $v\in L^2_{n-p,n-q-1}(\Omega_1)\cap\dom(\dbar_c^{\Omega_1})$  solving the equation  $\dbar_c^{\Omega_1}v=\tilde f$ such that we have properties \eqref{eq-v3} and \eqref{eq-v4}, i.e.,
\[
    \norm{v}_{L^2(\Omega_1)}\leq \sqrt{\frac{e\delta^2}{q}}\norm{f}_{L^2(\Omega_1)},
  \]
  and
  \[
      \norm{v}_{W^1(\Omega_2)}\leq K(\Omega_1,\Omega_2)\norm{f}_{L^2(\Omega_1)}.
  \]
On $\Omega_2$, we have $\dbar v=0$.  We know that $n-1\geq n-q-1\geq 1$, so we can apply Theorem \ref{thm:W^1_solvability} to each connected component of $\Omega_2$ and see that there exists $g\in W^1_{n-p,n-q-2}(\Omega_2)$ such that $\dbar g=v$ on $\Omega_2$ and
  \[
    \norm{g}_{W^1(\Omega_2)}\leq C\norm{v}_{W^1(\Omega_2)}.
  \]
  Define $\tilde g\in W^1_{n-p,n-q-2}(\Omega_1)$ by setting $\tilde g=Eg$, where $E$ is given by Lemma \ref{lem:extension_operator}.  Hence
  \[
    \norm{\tilde g}_{W^1(\Omega_1)}\leq E(\Omega_1,\Omega_2)\norm{g}_{W^1(\Omega_2)}.
  \]
  Then $\tilde g\in\dom\dbar_c^{\Omega_1}$ and $\dbar_c^{\Omega_1}\tilde g|_{\Omega_2}=v$.  Hence, $v-\dbar_c^{\Omega_1}\tilde g\in\dom\dbar_c^{\Omega}$ and $\dbar_c^{\Omega}(v-\dbar_c^{\Omega_1}\tilde g)=f$.
\end{proof}

To conclude the proof of Theorem \ref{thm:q-convex}, we will need the $L^2$-Serre duality from \cite{ChSh12}:
\begin{thm}[Chakrabarti-Shaw]
\label{thm:L2_Serre_Duality}
  Let $D\subset\mathbb{C}^n$ be a domain.  The following are equivalent:
  \begin{enumerate}
    \item The operators $\dbar:L^2_{p,q-1}(D)\rightarrow L^2_{p,q}(D)$ and $\dbar:L^2_{p,q}(D)\rightarrow L^2_{p,q+1}(D)$ have closed range.

    \item The map $\star:L^2_{p,q}(D)\rightarrow L^2_{n-p,n-q}(D)$ induces a conjugate-linear isomorphism of $H^{p,q}_{L^2}(D)$ with $H^{n-p,n-q}_{c,L^2}(D)$.
  \end{enumerate}
\end{thm}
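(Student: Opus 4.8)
The plan is to deduce the theorem from the general functional analysis of Hilbert complexes, with the conjugate-linear Hodge star $\star$ serving as the bridge between the maximal $\dbar$-complex and the minimal $\dbar_c$-complex. The one analytic input needed is the pointwise identity $\dbar^*=-\star\,\dbar_c\,\star$ on each bidegree (the same relation invoked in the proof of Lemma~\ref{lem:cs12lem3}; see also \cite[Lemma~3]{ChSh12}) which, together with the fact that $\star$ is a conjugate-linear isometry $L^2_{p,q}(D)\to L^2_{n-p,n-q}(D)$ (see \eqref{eq:hodge_star2}), says that $\star$ identifies the complex $(L^2_{p,\bullet}(D),\dbar)$ with the adjoint of the complex $(L^2_{n-p,n-\bullet}(D),\dbar_c)$. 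Chasing bidegrees through this identity and its adjoint, $\star$ carries $\ker(\dbar\colon L^2_{p,q}\to L^2_{p,q+1})$ onto $\ker(\dbar_c^*\colon L^2_{n-p,n-q}\to L^2_{n-p,n-q-1})$, carries $\ker(\dbar^*\colon L^2_{p,q}\to L^2_{p,q-1})$ onto $\ker(\dbar_c\colon L^2_{n-p,n-q}\to L^2_{n-p,n-q+1})$, and matches the corresponding ranges; in particular it restricts to a conjugate-linear isometry of the harmonic spaces $\mathcal{H}^{p,q}:=\ker\dbar\cap\ker\dbar^*$ and $\mathcal{H}^{n-p,n-q}_c:=\ker\dbar_c\cap\ker\dbar_c^*$. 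Finally, combining $\dbar^*=-\star\dbar_c\star$ with the functional-analytic fact (\cite[Theorem~1.1.1]{Hor65}) that a closed densely defined operator has closed range if and only if its adjoint does, one sees that condition~(1) is equivalent to the statement that the two $\dbar_c$-operators adjacent to $L^2_{n-p,n-q}(D)$ both have closed range.

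For $(1)\Rightarrow(2)$: under the closed-range hypotheses just reformulated, the Kodaira--Hodge decomposition holds for both complexes in the relevant bidegrees, so that $L^2_{p,q}(D)=\range\dbar\oplus\mathcal{H}^{p,q}\oplus\range\dbar^*$ with all three summands closed and $\ker\dbar=\range\dbar\oplus\mathcal{H}^{p,q}$; hence the inclusion of harmonic forms induces a topological isomorphism $\mathcal{H}^{p,q}\to H^{p,q}_{L^2}(D)$, and likewise $\mathcal{H}^{n-p,n-q}_c\to H^{n-p,n-q}_{c,L^2}(D)$. Composing these with the isometry $\star\colon\mathcal{H}^{p,q}\to\mathcal{H}^{n-p,n-q}_c$ produces the desired conjugate-linear topological isomorphism. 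That this is the map ``induced by $\star$'' is the observation that $\star$ takes harmonic representatives to harmonic representatives; equivalently, this isomorphism is the one adjoint to the Serre wedge pairing $([u],[v])\mapsto\int_D u\wedge v$, which by \eqref{eq-hodge} equals $(u,\star^{-1}v)_{L^2(D)}$ and which descends to the cohomologies precisely because of $\dbar^*=-\star\dbar_c\star$.

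For $(2)\Rightarrow(1)$: reading the isomorphism in~(2) as an isomorphism of topological vector spaces (the natural reading, since both cohomologies carry a quotient topology), it forces $H^{p,q}_{L^2}(D)$ and $H^{n-p,n-q}_{c,L^2}(D)$ to be simultaneously Hausdorff. But a quotient of the kernel of one closed operator by the range of another is Hausdorff exactly when that range is closed, so Hausdorffness of $H^{p,q}_{L^2}(D)$ is precisely closed range of $\dbar\colon L^2_{p,q-1}\to L^2_{p,q}$, whereas Hausdorffness of $H^{n-p,n-q}_{c,L^2}(D)$ is closed range of $\dbar_c\colon L^2_{n-p,n-q-1}\to L^2_{n-p,n-q}$, which by the bidegree chase of the first paragraph is equivalent to closed range of $\dbar\colon L^2_{p,q}\to L^2_{p,q+1}$. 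Together these are exactly~(1).

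The main difficulty I anticipate is organizational rather than analytic: one must (i) fix the sign and domain conventions so that the conjugate-linear $\star$ genuinely satisfies $\dbar^*=-\star\dbar_c\star$ with the Hilbert-space (not merely formal) realizations; (ii) keep straight which $\dbar_c$-operator on $L^2_{n-p,n-\bullet}(D)$ is $\star$-conjugate to which $\dbar$-operator on $L^2_{p,\bullet}(D)$; and (iii) make precise the sense in which $\star$ ``induces'' the cohomology isomorphism, since it does not descend directly to the naive quotients $\ker\dbar/\range\dbar$ and $\ker\dbar_c/\range\dbar_c$ and the identification must instead be routed through harmonic representatives (legitimate once closed range is known) or through the Serre pairing. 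The remaining steps are the standard Hilbert-complex machinery, for which I would refer to \cite{ChSh12}.
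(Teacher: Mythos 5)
The paper does not prove this theorem at all: it is quoted from \cite{ChSh12} as a known result, so there is no internal argument to compare against. Measured against the standard proof in that reference, your forward direction $(1)\Rightarrow(2)$ is correctly organized: the Hilbert-space identity $\dbar^{*}=-\star\,\dbar_c\,\star$ plus H\"ormander's closed-range duality converts hypothesis (1) into closed range of the two $\dbar_c$-operators adjacent to bidegree $(n-p,n-q)$; the strong Hodge decomposition then identifies each cohomology group topologically with its harmonic space; and $\star$ restricts to a conjugate-linear isometry $\mathcal{H}^{p,q}\to\mathcal{H}^{n-p,n-q}_c$. Your bidegree bookkeeping is right, and your remark (iii) --- that $\star$ does not descend to the naive quotients, so the induced map must be routed through harmonic representatives or through the wedge pairing --- is precisely the correct point of caution. (Note also that the paper only ever invokes this direction, in the proofs of Theorems \ref{thm:q-convex} and \ref{thm-infinitedim}.)

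The converse $(2)\Rightarrow(1)$ as you wrote it has a genuine gap. A topological isomorphism $H^{p,q}_{L^2}(D)\cong H^{n-p,n-q}_{c,L^2}(D)$ does \emph{not} force both spaces to be Hausdorff; it forces only that one is Hausdorff if and only if the other is. Since Hausdorffness of $H^{p,q}_{L^2}(D)$ is closed range of $\dbar:L^2_{p,q-1}\to L^2_{p,q}$, while Hausdorffness of $H^{n-p,n-q}_{c,L^2}(D)$ is (after your own bidegree chase) closed range of $\dbar:L^2_{p,q}\to L^2_{p,q+1}$, your argument delivers only ``the two operators in (1) have closed range simultaneously or fail simultaneously,'' which is strictly weaker than (1). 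There is also a circularity risk: if ``induced by $\star$'' means the harmonic-representative map, that map is only defined once closed range is already known, so in that reading (2) presupposes (1) and the converse is vacuous rather than proved. The non-circular route is to interpret the induced isomorphism through the Serre wedge pairing $([u],[v])\mapsto\int_D u\wedge v$, which descends to $H^{p,q}_{L^2}(D)\times H^{n-p,n-q}_{c,L^2}(D)$ with no closed-range hypothesis (the functional $u\mapsto\int_D u\wedge v$ is $L^2$-continuous and kills $\range\dbar$ whenever $v\in\ker\dbar_c$, and symmetrically); non-degeneracy of this pairing then supplies enough continuous linear functionals to separate points of each quotient, forcing $\range\dbar$ and $\range\dbar_c$ to be closed in \emph{both} relevant bidegrees, which is exactly (1). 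You mention the pairing in the forward direction but do not deploy it where it is actually needed.
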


\begin{proof}[Proof of Theorem \ref{thm:q-convex}:]
  According to Theorem \ref{thm:L2_Serre_Duality}, we can prove $H^{p,q}_{L^2}(\Omega)=0$ if we can show $\dbar$ has closed range in $L^2_{p,q}(\Omega)$ and $L^2_{p,q+1}(\Omega)$, and $H^{n-p,n-q}_{c,L^2}(\Omega)=0$.

  We note that weak $q$-convexity percolates up, so that weakly $q$-convex domains are also weakly $(q+1)$-convex, and weakly $(n-q-1)$-convex domains are also weakly $(n-q)$ convex.  Therefore, Theorem \ref{thm:q_n-q} guarantees that $\dbar$ has closed range in $L^2_{p,q}(\Omega)$ and $L^2_{p,q+1}(\Omega)$.  Since Proposition \ref{prop:q-convex} guarantees that $H^{n-p,n-q}_{c,L^2}(\Omega)=0$, we are done.
\end{proof}

\subsection{Infinite dimensional \texorpdfstring{$\dbar$}{dbar}-Cohomology: Proof of Theorem~\ref{thm-infinitedim}} We begin with the following dual statement, which is of interest in its own right:

\begin{prop}\label{prop-hcinfdim}
 For $n\geq2$, let  $\Omega_1\subset \cx^n$ be a bounded weakly $(n-1)$-convex domain and $\Omega_2$ a relatively compact  open subset of $\Omega_1$ with Lipschitz boundary.  Let $\Omega = \Omega_1 \setminus \ol{\Omega_2}$. Then there is a natural isomorphism  (i.e., a linear homeomorphism of topological vector spaces)
of the  space \[   \mathcal{O}W^1_{p}(\Omega_2)= W^1_{p,0}(\Omega_2)\cap \{\ker \dbar\}\]
of holomorphic $p$-forms on $\Omega_2$ with coefficients in the Sobolev space $W^1(\Omega_2)$  with the cohomology space of the annulus with minimal realization
\[
  H^{p,1}_{c,L^2}(\Omega):=(L^2_{p,1}(\Omega)\cap\ker\dbar_c^{\Omega})/(\range\dbar_c^\Omega).
\]
Consequently, the space $ H^{p,1}_{c,L^2}(\Omega)$ is Hausdorff and infinite dimensional.
\end{prop}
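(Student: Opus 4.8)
The plan is to mirror the proof of Proposition~\ref{prop-nonclosed}, replacing $\dbar_{\mathrm{mix}}$ by the strong minimal realization $\dbar_c$ and exploiting the fact that, unlike in the mixed case, weak $(n-1)$-convexity of $\Omega_1$ forces the corresponding cohomology of the envelope to vanish. Concretely, I would construct the map
\[
  \ell_c\colon \mathcal{O}W^1_p(\Omega_2)\longrightarrow H^{p,1}_{c,L^2}(\Omega),\qquad \ell_c(f)=\bigl[(\dbar Ef)|_{\Omega}\bigr],
\]
where $E$ is the extension operator of Lemma~\ref{lem:extension_operator} (so $Ef\in W^1_{p,0}(\Omega_1)$ has compact support in $\Omega_1$ and $Ef=f$ on $\Omega_2$), and show that $\ell_c$ is a linear homeomorphism. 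As in the proof of Proposition~\ref{prop-nonclosed}, $(\dbar Ef)|_{\Omega}$ vanishes on $\Omega_2$ and has compact support away from $b\Omega_1$, so its extension by zero over $\ol{\Omega_2}$ is $\dbar Ef$ on $\Omega_1$; using this together with Lemma~\ref{lem:extension_str_min}, Lemma~\ref{lem:extension}, and the Lipschitz regularity of $b\Omega_2$ one checks that $(\dbar Ef)|_{\Omega}\in\dom(\dbar_c^{\Omega})$ with $\dbar_c^{\Omega}\bigl((\dbar Ef)|_\Omega\bigr)=0$, and that $\ell_c$ is independent of the chosen compactly supported $W^1$-extension. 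Continuity of $\ell_c$ is then immediate, since $f\mapsto (\dbar Ef)|_\Omega$ is bounded from $W^1(\Omega_2)$ to $L^2_{p,1}(\Omega)$ and the quotient map onto $H^{p,1}_{c,L^2}(\Omega)$ is continuous.

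For injectivity, suppose $\ell_c(f)=0$, i.e. $(\dbar Ef)|_\Omega=\dbar_c^{\Omega}u$ for some $u\in\dom(\dbar_c^{\Omega})\cap L^2_{p,0}(\Omega)$. Extending $u$ by zero over $\ol{\Omega_2}$ gives $\tilde u\in\dom(\dbar_c^{\Omega_1})$ (immediate from the definition of $\dbar_c$, since a form compactly supported in $\Omega$ is compactly supported in $\Omega_1$), and then $F:=Ef-\tilde u$ satisfies $\dbar F=0$ on $\Omega_1$, $F\in L^2_{p,0}(\Omega_1)$, $F|_{\Omega_2}=f$, and---crucially---$F\in\dom(\dbar_c^{\Omega_1})$, because $Ef$ has compact support in $\Omega_1$. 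But a $(p,0)$-form in $\dom(\dbar_c^{\Omega_1})\cap\ker\dbar$ is identically zero: by Lemma~\ref{lem:extension_str_min} its extension by zero to $\cx^n$ lies in $\dom\dbar$ with vanishing $\dbar$, hence is entire, square-integrable and compactly supported, hence $\equiv 0$. Therefore $F\equiv 0$ and $f=F|_{\Omega_2}=0$. For surjectivity, let $\gamma\in H^{p,1}_{c,L^2}(\Omega)$ be represented by $g\in L^2_{p,1}(\Omega)\cap\ker\dbar_c^{\Omega}$. Extending $g$ by zero over $\ol{\Omega_2}$ yields $\tilde g\in\dom(\dbar_c^{\Omega_1})$ with $\dbar_c^{\Omega_1}\tilde g=0$ and $\tilde g|_{\Omega_2}=0$. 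Since $\Omega_1$ is weakly $(n-1)$-convex, Lemma~\ref{lem-qconvex3} provides a $W^1_{\mathrm{loc}}$-regular solution operator for $\dbar_c$ in degree $(p,1)$: there is $v\in\dom(\dbar_c^{\Omega_1})\cap L^2_{p,0}(\Omega_1)$ with $\dbar_c^{\Omega_1}v=\tilde g$ and $\norm{v}_{W^1(\Omega_2)}\le K(\Omega_1,\Omega_2)\norm{\tilde g}_{L^2(\Omega_1)}$ by \eqref{eq-v4}. Then $\dbar(v|_{\Omega_2})=\tilde g|_{\Omega_2}=0$, so $f:=v|_{\Omega_2}\in\mathcal{O}W^1_p(\Omega_2)$. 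Comparing the two $W^1$-extensions $Ef$ and $v$ of $f$---their difference vanishes on $\Omega_2$ and, near $b\Omega_1$, equals $-v$ with $v\in\dom(\dbar_c^{\Omega_1})$ while $Ef$ has compact support---a partition-of-unity argument shows $(Ef-v)|_\Omega\in\dom(\dbar_c^{\Omega})$, whence $\ell_c(f)=[(\dbar Ef)|_\Omega]=[(\dbar v)|_\Omega]=[g]=\gamma$.

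Finally, the solution operator $\tilde g\mapsto v$ is unique (by the injectivity argument, $\ker\dbar_c^{\Omega_1}=0$ in degree $(p,0)$), linear, and, as a map $L^2_{p,1}(\Omega)\cap\ker\dbar_c^\Omega\to W^1(\Omega_2)$, bounded by \eqref{eq-v3}--\eqref{eq-v4}; it annihilates $\range\dbar_c^\Omega$, so it descends to a continuous inverse $\ell_c^{-1}\colon[g]\mapsto v|_{\Omega_2}$. Hence $\ell_c$ is a linear homeomorphism, and in particular $H^{p,1}_{c,L^2}(\Omega)$ is Hausdorff (equivalently, $\dbar_c^{\Omega}$ has closed range in degree $(p,1)$). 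Since $\mathcal{O}W^1_p(\Omega_2)$ contains all polynomial $(p,0)$-forms---whose coefficients lie in $W^1(\Omega_2)$ because $\Omega_2$ is bounded---it is infinite-dimensional, and therefore so is $H^{p,1}_{c,L^2}(\Omega)$.

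I expect the main obstacle to be the bookkeeping of membership in $\dom(\dbar_c^{\Omega})$ and $\dom(\dbar_c^{\Omega_1})$: since $\Omega_1$ carries no boundary regularity, one cannot invoke the ``extension by zero to $\cx^n$'' characterization (Lemma~\ref{lem:extension_str_min}) globally on the annulus. Instead, near $b\Omega_1$ one works with forms that are either compactly supported or restrictions of $\dom(\dbar_c^{\Omega_1})$-forms (so the Dirichlet condition there is automatic), while near $b\Omega_2$ one uses the Lipschitz regularity of $\Omega_2$ and the extension lemmas (Lemma~\ref{lem:extension}, cf. \cite[Lemma~2.4]{LaSh13}); a partition of unity glues these. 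These are standard manipulations of minimal realizations, but they must be carried out carefully enough to keep the bounds in \eqref{eq-v3}--\eqref{eq-v4} in play, which is exactly what makes $\ell_c^{-1}$ continuous and hence yields the topological isomorphism rather than a mere algebraic one.
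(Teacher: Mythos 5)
Your proposal is correct and follows essentially the same route as the paper: the same map $f\mapsto\bigl[(\dbar Ef)|_{\Omega}\bigr]$, the same use of the $\dbar_c$-solvability with interior $W^1$ bounds on the weakly $(n-1)$-convex envelope for surjectivity and for the continuity of the inverse, and the same ``compactly supported entire form vanishes'' argument to finish injectivity. The only (harmless, arguably cleaner) deviation is in the injectivity step, where you extend $u$ by zero into the hole to get a global holomorphic $F=Ef-\tilde u$ on $\Omega_1$ directly, whereas the paper first observes $Ef-u$ is holomorphic on the annulus and invokes the Hartogs phenomenon to extend it across $\ol{\Omega_2}$.
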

\begin{proof}
Define a map
\[ \ell: \mathcal{O}W^1_{p}(\Omega_2)\to H^{p,1}_{c, L^2}(\Omega)\]
in the following way.  Let $E$ be given by Lemma \ref{lem:extension_operator}, and define, for $f\in \mathcal{O}W^1_{p}(\Omega_2)$
\[\ell(f) = \left[ (\dbar E f)|_\Omega\right] \in H^{p,1}_{c, L^2}(\Omega),\]
which is easily checked to be defined independently of the choice of the extension operator: in fact we could have taken any extension of $f$ to a form in $W^1_{p,0}(\Omega_1)$ to define $\ell$.

We claim that this map $\ell$ is a linear homeomorphism of topological vector spaces where
$\mathcal{O}W^1_{p}(\Omega_2)$ is given its natural norm topology as a closed subspace of
the space $W^1_{(p,0)}(\Omega_2)$ of $(p,0)$-forms on $\Omega_2$ with coefficients in  the Sobolev space $W^1(\Omega_2)$ on hole, and the cohomology space with minimal realization
 $H^{p,1}_{c, L^2}(\Omega)$ on the annulus is given its natural quotient topology. Notice that at this point we do not know whether $H^{p,1}_{c, L^2}(\Omega)$ is Hausdorff or not.

We see from the definition that $\ell$ is a continuous linear map between (possibly non-Hausdorff) topological vector spaces. We will show that $\ell$ is a set theoretic bijection, and its
inverse map is continuous, which will prove our claim.

To see that $\ell$ is injective, let $f\in \ker \ell$ so that there is a $u\in \dom(\dbar_c^\Omega)\cap L^2_{p,0}(\Omega)$  such that $\dbar_c^\Omega u =\dbar E f$. Let $g=Ef-u$ on $\Omega$, which is  therefore in $L^2_{p,0}(\Omega)$. Since $\dbar g=0$, it follows that $g$ is a holomorphic $p$-form on $\Omega$, and therefore, by Hartogs phenomenon, extends to a holomorphic $p$-form
$\wt{g}$ on the envelope $\Omega_1$ of the annulus. On $\Omega$, we can therefore write
\[ u=Ef -\wt{g}|_\Omega.\]
Now by construction, $Ef$ has compact support in $\Omega_1$, which means that in a neighborhood of the outer boundary $b\Omega_1$ of the annulus, we have $u=- \wt{g}$. But
since $u\in \dom(\dbar_c^\Omega)$, it follows that $\wt{g}\in \dom(\dbar_c^{\Omega_1}) \cap L^2_{p,0}(\Omega_1)$. Therefore if we extend $\wt{g}$ by 0 outside $\Omega_1$ by zero, then for the extended form we have $\dbar \wt{g}=0$ on $\cx^n$, which means that the extended form is a
holomorphic form  on $\cx^n$ with compact support. So $\wt{g}\equiv 0$, so that $u=Ef$ on
the annulus $\Omega$ and the extended function $Ef|_\Omega\in \dom(\dbar_c^\Omega)$.  Let $\wt{f}$ be the function on $\Omega_1$ obtained by extending $Ef|_\Omega$ by zero  on $\ol{\Omega_2}$.  Then $\wt{f}\in \dom(\dbar_c^{\Omega_1})$.
Therefore $Ef-\wt{f}$ is also in  $\dom(\dbar_c^{\Omega_1})$. Now on $\Omega_2$
this function $Ef-\wt{f}$ coincides with $f$ and on $\Omega$ it vanishes, and it is holomorphic
since $\dbar(Ef-\wt{f})=0$. It follows now that $f\equiv 0$, so the map $\ell$ is injective.

To show that $\ell$ is a surjective, let $[g]\in H^{p,1}_{c,L^2}(\Omega)$ be a cohomology class,
where $g\in \dom(\dbar_c^\Omega)\cap L^2_{p,1}(\Omega)$ with $\dbar g=0$. Extending by 0 on $\Omega_2$ , we obtain a form $\wt{g}$ on $\Omega_1$ which clearly belongs to $\dom(\dbar_c^{\Omega_1})\cap L^2_{p,1}(\Omega_1)$, and in fact, $\dbar_c^{\Omega_1} \wt{g}=0$. By Lemma~\ref{lem-qconvex}, since $\Omega_1$ is $(n-1)$-convex, there is a $u\in \dom(\dbar_c^{\Omega_1})\cap L^2_{p,0}(\Omega_1)$ such
that $\dbar u =\wt{g}$. Let $f=u|_{\Omega_2}$. Then clearly $f\in \mathcal{O}W^1_{p}(\Omega_2) $, and $\ell(f)=[\dbar u]= [g]$.

Therefore, the map $\ell$ is a continuous bijection, and $\ell^{-1}$ is given by   $\ell^{-1}([g])=
u|_{\Omega_2}$, which is easily seen to be a continuous map. Therefore $\ell$ is a linear homeomorphism of topological vector spaces, in fact an invertible linear map between Hilbert spaces.
  \end{proof}

  \begin{proof}[Proof of Theorem~\ref{thm-infinitedim}] Thanks to Theorem~\ref{thm:q-convex} with $q=n-1$, since the envelope is assumed to be $(n-1)$-convex and the hole to be pseudoconvex,
  the $\dbar$-operator
  \[ \dbar: L^2_{p, n-2}(\Omega)\to L^2_{p,n-1}(\Omega),\]
  has closed range.  Also, as in any domain, $\dbar: L^2_{p, n-1}(\Omega)\to L^2_{p,n}(\Omega)$ has closed range. Then we can apply Theorem~\ref{thm:L2_Serre_Duality} to conclude that the map
  \[ \star: H^{p, n-1}_{L^2}(\Omega)\to H^{n-p,1}_{c,L^2}(\Omega),\]
  is a conjugate linear isomorphism. By Proposition~\ref{prop-hcinfdim} above,
  the target is infinite dimensional and Hausdorff, and therefore the same is true for $H^{p, n-1}_{L^2}(\Omega)$, as needed.

  \end{proof}

  Using the definitions of the Hodge-star operator   and that of the map $\ell$ of Proposition~\ref{prop-hcinfdim},  Theorem~\ref{thm-infinitedim} can be restated in the following more precise form:
  \begin{cor}\label{cor-pairing}  Let $\Omega=\Omega_1\setminus\ol{\Omega_2}$ be an annulus in $\cx^n, n\geq 3$, where the envelope $\Omega_1$ is weakly $(n-1)$-convex and the hole $\Omega_2$ has $\mathcal{C}^{1,1}$-boundary and is pseudoconvex.  The bilinear pairing
  \[ H^{p,n-1}_{L^2}(\Omega)\times \mathcal{O}W^1_{n-p}(\Omega_2)\to \cx\]
  given  for $g\in L^2_{p,n-1}(\Omega)\cap \ker( \dbar)$ and $f\in\mathcal{O}W^1_{n-p}(\Omega_2)$ by
  \begin{equation}\label{eq-newpairing}
   [g], f \mapsto \int_\Omega g\wedge (\dbar E f)
   \end{equation}
  identifies the dual of $H^{p,n-1}_{L^2}(\Omega)$ with $ \mathcal{O}W^1_{n-p}(\Omega_2)$, where $E$ is the extension operator in Lemma~\ref{lem:extension_operator}.

  \end{cor}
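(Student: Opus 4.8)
The plan is to recognize the pairing \eqref{eq-newpairing} as the composition of the linear homeomorphism $\ell$ of Proposition~\ref{prop-hcinfdim} with the $L^2$-Serre duality pairing on the annulus, the latter being a \emph{perfect} pairing. First I would record the closed-range facts: since the envelope $\Omega_1$ is weakly $(n-1)$-convex and the hole $\Omega_2$ is pseudoconvex with $\mathcal{C}^{1,1}$ boundary, Theorem~\ref{thm:q_n-q} with $q=n-1$ shows that $\dbar$ has closed range in $L^2_{p,n-1}(\Omega)$; combined with the closed range of $\dbar:L^2_{p,n-1}(\Omega)\to L^2_{p,n}(\Omega)$ (automatic in any domain), this makes $H^{p,n-1}_{L^2}(\Omega)$ Hausdorff, hence a Hilbert space, and dually $H^{n-p,1}_{c,L^2}(\Omega)$ is a Hilbert space as well.

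Next I would introduce the wedge pairing $B([g],[h])=\int_\Omega g\wedge h$, defined for $g\in L^2_{p,n-1}(\Omega)\cap\ker\dbar$ and $h\in L^2_{n-p,1}(\Omega)\cap\dom(\dbar_c^\Omega)\cap\ker\dbar_c^\Omega$, and check that it descends to a continuous bilinear map $H^{p,n-1}_{L^2}(\Omega)\times H^{n-p,1}_{c,L^2}(\Omega)\to\cx$: if $g$ is replaced by $g+\dbar\alpha$ with $\alpha\in\dom\dbar\cap L^2_{p,n-2}(\Omega)$, then $\int_\Omega\dbar\alpha\wedge h=\int_\Omega\dbar(\alpha\wedge h)$ (using $\dbar h=0$), which vanishes because $h\in\dom\dbar_c^\Omega$ lets one approximate $\alpha\wedge h$ by forms compactly supported in $\Omega$, so Stokes' theorem applies with no boundary term; the symmetric argument (now using that the primitive lies in $\dom\dbar_c^\Omega$) handles a change of $[h]$, and continuity is Cauchy--Schwarz on representatives. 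The essential point is that $B$ is perfect, i.e.\ it identifies $H^{n-p,1}_{c,L^2}(\Omega)$ with the topological dual of $H^{p,n-1}_{L^2}(\Omega)$ and conversely. This is exactly $L^2$-Serre duality: via the (unweighted) Hodge star of \eqref{eq-hodge}, the defining identity $\langle v,u\rangle\,dV=v\wedge\star u$ gives, on representatives, $\int_\Omega g\wedge h=\langle g,\star^{-1}h\rangle_{L^2(\Omega)}$ up to conjugation and sign, so, since by Theorem~\ref{thm:L2_Serre_Duality} the operator $\star$ descends to a conjugate-linear isomorphism $H^{p,n-1}_{L^2}(\Omega)\to H^{n-p,1}_{c,L^2}(\Omega)$, the non-degeneracy of $B$ and the identification of the duals follow from the Riesz representation theorem applied to harmonic representatives; alternatively one invokes the formulation of $L^2$-Serre duality in \cite{ChSh12}, which is already phrased in terms of this wedge pairing.

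Finally I would compose. By Proposition~\ref{prop-hcinfdim} (with $p$ replaced by $n-p$), the map $\ell:\mathcal{O}W^1_{n-p}(\Omega_2)\to H^{n-p,1}_{c,L^2}(\Omega)$, $\ell(f)=[(\dbar Ef)|_\Omega]$, is a linear homeomorphism independent of the choice of extension operator $E$. Hence $B([g],\ell(f))=\int_\Omega g\wedge(\dbar Ef)$ is precisely the pairing \eqref{eq-newpairing}; being the composition of the isomorphism $\ell$ with a perfect pairing, it identifies $\mathcal{O}W^1_{n-p}(\Omega_2)$ with the dual of $H^{p,n-1}_{L^2}(\Omega)$. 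The main obstacle is not conceptual but careful bookkeeping: one must check that the integration-by-parts steps genuinely carry no boundary term (the minimal realization enters here, and since $\Omega$ itself need not be Lipschitz one passes through extension by zero into $\Omega_1$ and then into $\cx^n$ as in Lemmas~\ref{lem:extension_str_min} and \ref{lem:extension}), and one must track the conjugation and sign relating the wedge pairing to the $L^2$ inner product so that $B$ is exactly the pairing that $\star$ renders perfect.
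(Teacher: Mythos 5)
Your proof is correct and follows essentially the same route the paper intends: the corollary is obtained by composing the conjugate-linear Serre-duality isomorphism $\star: H^{p,n-1}_{L^2}(\Omega)\to H^{n-p,1}_{c,L^2}(\Omega)$ (available because Theorem~\ref{thm:q_n-q} with $q=n-1$ gives closed range) with the homeomorphism $\ell$ of Proposition~\ref{prop-hcinfdim}, and then unwinding the Hodge-star identity $\langle v,u\rangle\,dV=v\wedge\star u$ to recognize the resulting perfect pairing as \eqref{eq-newpairing}. You merely make explicit the well-definedness and nondegeneracy checks that the paper leaves implicit when it presents the corollary as a restatement of Theorem~\ref{thm-infinitedim}.
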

  The fact that there is a duality between the cohomology $H^{p,n-1}_{L^2}(\Omega)$ of the
  annulus and the space $\mathcal{O}W^1_{n-p}(\Omega_2)$ of holomorphic $(n-p)$-forms
  on the hole with Sobolev $W^1$ coefficients was first observed in the case when $\Omega_2$ has $\mathcal{C}^\infty$-smooth boundary, and $\Omega_1$ is pseudoconvex by Shaw \cite[Theorem~3.3]{Sha10}.
 The pairing \eqref{eq-newpairing} in Corollary~\ref{cor-pairing} is identical to  the boundary pairing used in \cite{Sha10}, since
 \[ \int_\Omega g \wedge \dbar Ef = \int_\Omega d(g\wedge Ef) = -\int_{b\Omega_2}  g\wedge f,\]
 where we have used the fact that $Ef$ has compact support in $\Omega_1$, and the negative
 sign results from the orientation of $b\Omega_2$ as the boundary of $\Omega_2$, rather than a part of the boundary of $\Omega$.

\bibliographystyle{amsplain}
\bibliography{harrington}

\end{document}